\newtheorem{theorem}{Theorem}[section]
\newtheorem{definition}[theorem]{Definition}
\newtheorem{remark}[theorem]{Remark}
\newtheorem{lemma}[theorem]{Lemma}
\newtheorem{proposition}[theorem]{Proposition}
\newtheorem{corollary}[theorem]{Corollary}
\numberwithin{equation}{section}
\def\XXint#1#2#3{{\setbox0=\hbox{$#1{#2#3}{\int}$}
     \vcenter{\hbox{$#2#3$}}\kern-.5\wd0}}
\newcommand{\twopartdef}[4]
{
\left\{
		\begin{array}{ll}
			#1 & #2 \\
			#3 & #4
		\end{array}
	\right.
}
\author{Samer Dweik, Wojciech G\'{o}rny}
\address{S. Dweik: Laboratoire de Math\'ematiques de Versailles, UVSQ, Universit\'e Paris-Saclay, 45 avenue des Etats-Unis, F-78035 Versailles Cedex.}
\address{W. G\'{o}rny: Faculty of Mathematics, University of Vienna, Oskar-Morgernstern-Platz 1, 1090 Wien; Faculty of Mathematics, Informatics and Mechanics, University of Warsaw, Banacha 2, 02-097 Warsaw, Poland.}
\email{samer.dweik@uvsq.fr, wojciech.gorny@univie.ac.at}
\date{\today}
\subjclass[2020]{35J25, 35J75, 49Q22}
\title[Weighted least gradient problem via optimal transport]{Optimal transport approach to Sobolev regularity \\ of solutions to the weighted least gradient problem}
\keywords{Least Gradient Problem, Optimal transport, Anisotropy}
\begin{document}

\begin{abstract}
We study the equivalence between the weighted least gradient problem and the weighted Beckmann minimal flow problem or equivalently, the optimal transport problem with Riemannian cost. Thanks to this equivalence, we prove existence and uniqueness of a solution to the weighted least gradient problem. Then, we show $L^p$ regularity on the transport density between two singular measures in the corresponding equivalent Riemannian optimal transport formulation. This will imply $W^{1,p}$ regularity of the solution of the weighted least gradient problem. 
\end{abstract}

\maketitle

\section{Introduction}
 
In this paper, we study a variant of the least gradient problem. The classical version of the least gradient problem consists of minimizing the total variation of the vector measure $Du$ among all BV functions $u$ defined on an open bounded domain $\Omega$ with given
boundary datum $g$:
\begin{equation}\label{BV least gradient problem classical}
\inf\bigg\{\int_{\Omega} |D u|\,:\,u \in BV(\Omega),\, u|_{\partial\Omega}=g\bigg\},
\end{equation}
where $u|_{\partial\Omega}$ denotes the trace of $u$ on the boundary $\partial\Omega$. It was first considered in this form in \cite{Sternberg}, where the authors prove existence and uniqueness of a solution to \eqref{BV least gradient problem classical} in the case where $\Omega$ is strictly convex and the boundary datum $g$ is continuous. The convexity assumption on the domain is crucial for the analysis performed in \cite{Sternberg}, and indeed the authors provide examples of boundary data for which there exist no solutions if it is violated. In the case when $\Omega$ is not strictly convex, one needs to introduce admissibility conditions on the boundary data to obtain existence and uniqueness of solutions, see \cite{Sabra,DG2019}. On the other hand, on strictly convex domains the continuity assumption on the boundary data can be relaxed; for instance, in \cite{Gor2018CVPDE} the author proved that in two dimensions Problem \eqref{BV least gradient problem classical} admits a solution as soon as $g \in BV(\partial\Omega)$ (for generalisations to higher dimensions, see \cite{Gor2021IUMJ,Mor}). This result is sharp in the sense that a counter-example given in \cite{Spradlin} shows that if $g \notin BV(\partial\Omega)$, solutions to Problem \eqref{BV least gradient problem classical} might not exist. Moreover, we lose uniqueness of the solution to Problem \eqref{BV least gradient problem classical} if $g \notin C(\partial\Omega)$.  \\



 
 
In \cite{GRS2017NA,DS}, the authors proved that in 2D, the problem \eqref{BV least gradient problem classical} is closely related to the following minimal flow formulation (which is called the Beckmann problem; see \cite{Beckmann}) \begin{equation} 
 \label{BeckmannIntro}\inf\bigg\{\int_{\overline{\Omega}} |v|\,:\,v \in \mathcal{M}(\overline{\Omega}, \mathbb{R}^2),\,\mathrm{div} \, v=0\,\,\,\,\mbox{and}\,\,\,v \cdot n =f:= \partial_\tau g \,\,\mbox{on}\,\,\,\partial\Omega\bigg\},
\end{equation}
provided that the domain $\Omega$ is convex and $g \in BV(\partial\Omega)$. Here,  the finite signed measure $\partial_\tau g$ is the tangential derivative of $g$. Moreover,  $\mathcal{M}(\overline{\Omega}, \mathbb{R}^2)$ denotes the set of vector measures over $\overline{\Omega}$ while the divergence constraint is understood in the distributional sense. In fact, the infima in the two problems coincide, and if $\Omega$ is strictly convex there is a one-to-one correspondence between vector measures $Du$ which are derivatives of solutions to \eqref{BV least gradient problem classical} and vector measures $v$ which solve \eqref{BeckmannIntro}. This equivalence is formally given by $v:=R_{\frac{\pi}{2}} Du$  on $\overline{\Omega}$. If $\Omega$ is only convex, the one-to-one correspondence is still true provided that for every optimal flow $v$ of the Beckmann problem, its total variation $|v|$ gives zero mass to the boundary. As a consequence, regularity estimates for one problem can be transferred to the other one, in particular the $L^p$ summability of the optimal flow $v$ yields $W^{1,p}$ regularity of the solution $u$ to Problem \eqref{BV least gradient problem classical}. We will describe this equivalence in more detail in Section \ref{sec:equivalenceeuclidean}.

On the other hand, the Beckmann problem \eqref{BeckmannIntro} is equivalent to the optimal transport problem with Euclidean cost (which is called the Monge-Kantorovich problem; see \cite{Kantorovich,Monge}) with source and target measures located on $\partial\Omega$ (see \cite{San2015}): 
\begin{equation}\label{KantoIntro}\min\bigg\{\int_{\overline{\Omega} \times \overline{\Omega}} |x-y|\,\mathrm{d}\Lambda\,:\,\Lambda \in  \mathcal{M}^+(\overline{\Omega} \times \overline{\Omega}),\,(\Pi_x)_{\#}\Lambda=f^{+} \,\,\mbox{and}\,\,(\Pi_y)_{\#} \Lambda =f^{-}\bigg\},
\end{equation}
where $f^+$ and $f^-$ are the positive and negative parts of $f$ (we note that $f^+$ and $f^-$ have the same total mass since $f$ is the tangential derivative of $g$ on the closed set $\partial\Omega$, so $f(\partial\Omega)=0$).  Moreover, it is well known (see, for instance, \cite{San2015}) that the Monge-Kantorovich problem \eqref{KantoIntro} has a dual formulation, which is the following:
\begin{equation}\label{Kanto dualIntro} 
    \sup\bigg\{\int_{\overline{\Omega}} \psi\,\mathrm{d}(f^+ - f^-)\,:\,\psi \in  \mbox{Lip}_1(\overline{\Omega})\bigg\}.
\end{equation}
From \cite{San2015}, every optimal flow $v$ of Problem \eqref{BeckmannIntro} is of the form $v=-|v|\nabla \psi$, where $\psi$ is a Kantorovich potential (i.e. a maximizer for the dual problem \eqref{Kanto dualIntro}). In addition, one can show that this nonnegative measure $|v|$ is a {\it{transport density}} between $f^+$ and $f^-$, i.e. there is an optimal transport plan $\Lambda$ such that 
\begin{equation*} \label{transport density definition Intro}
|v|(A)=\int_{\overline{\Omega} \times \overline{\Omega}} \mathcal{H}^1([x,y] \cap A)\,\mathrm{d}\Lambda (x,y),\,\,\,\mbox{for all Borel set}\,\,A \subset \overline{\Omega}.
\end{equation*}
We see that $|v|(\partial\Omega)=0$ as soon as $\Omega$ is strictly convex and so, the equivalence between Problems \eqref{BV least gradient problem classical} \& \eqref{BeckmannIntro} gives existence of a solution $u$ for Problem \eqref{BV least gradient problem classical}.  Moreover, classical results imply that the transport density $|v|$ is unique (i.e., it does not depend on the choice of the optimal transport plan $\Lambda$) provided $f^+$ or $f^-$ is absolutely continuous with respect to the Lebesgue measure $\mathcal{L}^d$. The $L^p$ summability of the transport density has been studied in several papers \cite{DePas1,DePas2,DePas3,San} and the results can be summarized as follows: for every $p \in [1,\infty]$, the transport density is in $L^p(\Omega)$ provided that both $f^+$ and $f^-$ are in $L^p(\Omega)$. However, none of these assumptions is satisfied in our setting: the source and target measures $f^+$ and $f^-$ in Problem \eqref{KantoIntro} are concentrated on the boundary, so it is not clear whether the transport density $|v|$ is unique or not. Nonetheless, the authors of \cite{DS} prove uniqueness of the optimal transport plan $\Lambda$ (and so, uniqueness of the optimal flow $v$) under the assumption that $f^+$ or $f^-$ is atomless and $\Omega$ is strictly convex. Moreover, they proved that if the domain is uniformly convex and both $f^+$ and $f^-$ are in $L^p(\partial\Omega)$ with $p \leq 2$, then the transport density between them is in $L^p(\Omega)$. Recalling again the equivalence between Problems \eqref{BV least gradient problem classical} \& \eqref{BeckmannIntro}, this implies that on uniformly convex domains the solution $u$ of the problem \eqref{BV least gradient problem classical} is unique as soon as $g \in C(\partial\Omega)$ and (for $p \leq 2$) it belongs to $W^{1,p}(\Omega)$ as soon as $g \in W^{1,p}(\partial\Omega)$.\\

In this paper, we are interested in studying the planar weighted least gradient problem (see \cite{JMN,MNT}): 
\begin{equation}\label{eq:weightedleastgradientproblem}\tag{wLGP}
\inf \bigg\{ \int_\Omega k(x)|Du|: \,\, u \in BV(\Omega), \,\, u|_{\partial\Omega} = g \bigg\}
\end{equation}
where $k:\Omega \mapsto \mathbb{R}^+$ is a given smooth function. It is closely connected to the conductivity imaging problem (see for instance \cite{JMN}), which is an inverse problem appearing with relation to medical imaging, and its purpose is to recover the conductivity $\sigma$ of a given body from a measurement of the magnitude of the current density $|J|$ inside the body and voltage $g$ on its boundary. Denote by $u$ the electrical potential corresponding to the voltage $g$, then $u$ formally satisfies the equation
\begin{equation}\label{eq:conductivityimaging}
\twopartdef{\mbox{div}(\sigma \nabla u) = 0}{\mbox{in }\, \Omega,}{u = g}{\mbox{on }\, \partial\Omega.}    
\end{equation} 
By Ohm's law, the current density equals $J = - \sigma \nabla u$, so we may formally rewrite the above problem as the weighted $1$-Laplace equation
\begin{equation}\label{eq:weighted1laplace}
\twopartdef{-\mbox{div}(|J| \frac{\nabla u}{|\nabla u|}) = 0}{\mbox{in }\, \Omega,}{u = g}{\mbox{on }\, \partial\Omega,}    
\end{equation}
which is the Euler-Lagrange equation of the the weighted least gradient problem with weight $k = |J|$ (for a precise justification of this passage, see \cite{MNT}). The conductivity imaging problem is in turn related to a variant of the Calder\'on problem, in which one wants to recover the conductivity of the body $\Omega$ solely from the measurement at the boundary and the knowledge of a Dirichlet-to-Neumann map associated to the weighted $1$-Laplace equation \eqref{eq:weighted1laplace} (or equivalently, to \eqref{eq:conductivityimaging}). Let us note that the Dirichlet-to-Neumann map for the isotropic $1$-Laplace operator was studied in \cite{HM}.

In \cite{JMN}, the authors showed existence of a solution $u$ for Problem \eqref{eq:weightedleastgradientproblem} as soon as $\partial\Omega$ satisfies a positivity condition on a sort of generalized mean curvature
related to $k$, and $g \in C(\partial\Omega)$. Moreover, they
proved that if $k \in C^{1,1}(\overline{\Omega})$ is positive and bounded away from zero and if $g$ is continuous
on $\partial\Omega$, then the weighted least gradient problem \eqref{eq:weightedleastgradientproblem} has at most one minimizer. They also showed that the condition $k \in C^{1,1}(\overline{\Omega})$ is sharp in the sense that uniqueness may
fail if $k \in  C^{1,\alpha}(\overline{\Omega})$ with $\alpha < 1$. In the present paper, we show existence and uniqueness of a solution $u$ for Problem \eqref{eq:weightedleastgradientproblem} via an optimal transport approach. More precisely, we will refine and generalize first the result of \cite{GRS2017NA} by proving that if $\Omega \subset \mathbb{R}^2$ is contractible and if $g \in BV(\partial\Omega)$, then Problem \eqref{eq:weightedleastgradientproblem} is completely equivalent to the weighted Beckmann problem: 
\begin{equation}\label{eq:weightedbeckmannproblem}\tag{wBP}
\inf \bigg\{ \int_{\overline{\Omega}} k(x)|v|: v \in \mathcal{M}(\overline{\Omega}, \mathbb{R}^2), \,\, \text{ div } v= 0, \,\, v \cdot n = f \text{ on } \partial\Omega \bigg\},
\end{equation}
where $f:= \partial_\tau g$. In particular, we have that $u$ is a solution for Problem \eqref{eq:weightedleastgradientproblem} if and only if the corresponding $v:=R_{\frac{\pi}{2}}Du$ is an optimal flow for Problem \eqref{eq:weightedbeckmannproblem} with $|v|(\partial\Omega)=0$. Yet, one can also show that Problem \eqref{eq:weightedbeckmannproblem} is equivalent to the following optimal transport problem with Riemannian cost (see also \cite{Prat,DweikWeighted}): 
\begin{equation}\label{Weighted KantoIntro}\min\bigg\{\int_{\overline{\Omega} \times \overline{\Omega}} d_k(x,y)\,\mathrm{d}\Lambda\,:\,\Lambda \in  \mathcal{M}^+(\overline{\Omega} \times \overline{\Omega}),\,(\Pi_x)_{\#}\Lambda=f^{+} \,\,\mbox{and}\,\,(\Pi_y)_{\#} \Lambda =f^{-}\bigg\},
\end{equation}
where $d_k$ denotes the distance
generated by the conformally  Riemannian metric $k$. 
In fact, we will show that if $\Omega$ is geodesically convex, then every optimal flow $v$ of Problem \eqref{eq:weightedbeckmannproblem} is of the form $v=k^{-1}v_\Lambda$, for some optimal transport plan $\Lambda$ of Problem \eqref{Weighted KantoIntro}, where
\begin{equation*} \label{Weighted transport density definition Intro}
<v_\Lambda,\xi>=\int_{\overline{\Omega} \times \overline{\Omega}} \int_0^1 k(\gamma_{x,y}(t))\,\xi(\gamma_{x,y}(t))\cdot \gamma_{x,y}^\prime(t)\,\mathrm{d}t\,\mathrm{d}\Lambda (x,y),\,\,\,\mbox{for all}\,\,\xi \in C(\overline{\Omega},\mathbb{R}^2).
\end{equation*}
It is not difficult to see that $|v_\Lambda|(\partial\Omega)=0$ as soon as $\Omega$ is geodesically strictly convex. Thanks to this fact, we will show that Problem \eqref{eq:weightedleastgradientproblem} reaches a minimum provided that $g \in BV(\partial\Omega)$. On the other hand, we will also show that Problem \eqref{Weighted KantoIntro} has a unique optimal transport plan $\Lambda$ as soon as $f^+$ or $f^-$ is atomless and $\Omega$ is geodesically strictly convex. This yields to uniqueness of the solution $u$ of Problem \eqref{eq:weightedbeckmannproblem} provided that $g \in C(\partial\Omega)$.

Moreover, we will study the $L^p$ summability of the transport density $|v_\Lambda|$ (where $\Lambda$ is the unique optimal transport plan in Problem \eqref{Weighted KantoIntro}) between two measures $f^+$ and $f^-$ which are located on the boundary $\partial\Omega$. More precisely, we will extend the $L^p$ estimates of \cite{DS} on the transport density to the case where the transport cost is given by a Riemannian distance $d_k(x,y)$, provided that $\Omega$ is  geodesically  uniformly convex. In fact, proving $L^p$ estimates on this transport density is significantly more difficult than in \cite{DS}, since the transport rays are now Riemannian geodesics; they are not necessarily straight lines like in the Euclidean case, and the key point in the proof would be to show a geometric result concerning the directions of the transport rays. Together with the equivalence between problems \eqref{eq:weightedleastgradientproblem} and \eqref{eq:weightedbeckmannproblem}, this yields $W^{1,p}$ regularity on the solution $u$ of Problem \eqref{eq:weightedleastgradientproblem}. In short, the main result in the present paper can be stated as follows: if $p \leq 2$, the domain $\Omega$ is geodesically uniformly convex and $g \in W^{1,p}(\partial\Omega)$, the weighted least gradient problem \eqref{eq:weightedleastgradientproblem} has a unique solution $u \in W^{1,p}(\Omega)$.\\

 The paper is organised as follows. In Section \ref{sec:preliminaries}, we recall the basic definitions concerning anisotropic BV functions and Anzellotti pairings, as well as the state of the literature concerning the weighted least gradient problem and the equivalence between the least gradient problem and the boundary-to-boundary optimal transport problem in the Euclidean case. In Section \ref{sec:lgpbeckequivalence}, we prove the equivalence between the anisotropic least gradient problem and anisotropic Beckmann problem under minimal assumptions. Then, in Section \ref{sec:beckmongeequivalence}, we consider Riemannian metrics and show equivalence between the weighted Beckmann problem and the optimal transport problem with Riemannian cost. Then, in Section \ref{sec:structure}, we study some consequences of the results in the previous two Sections for the structure of solutions in the least gradient problem. The main results in this paper, concerning $L^p$ estimates for the transport density, are in Section \ref{sec:lpestimates}. Finally, in Section \ref{sec:applications}, we apply the $L^p$ estimates for the transport density to obtain regularity estimates for solutions of the weighted least gradient problem.

{

\section{Preliminaries}\label{sec:preliminaries}

In this paper, our main goal is to study regularity of solutions to the weighted least gradient problem. To this end, we first recall in Section \ref{sec:anisotropicbv} the notion of anisotropic $BV$ spaces and some of their properties. We follow the presentation in \cite{AB}. We also recall the notion of Anzellotti pairings, weak normal traces and a corresponding version of the Gauss-Green formula (see \cite{Anz,CF}). Then, in Section \ref{sec:weightedlgp}, we recall the precise statement and known results on the weighted least gradient problem. Finally, in Section \ref{sec:equivalenceeuclidean}, we present known results on the equivalence between the two-dimensional least gradient problem and a version of the optimal transport problem in the Euclidean case. Throughout the whole paper, $\Omega \subset \mathbb{R}^2$ denotes an open bounded set with Lipschitz boundary.

\subsection{Anisotropic BV spaces}\label{sec:anisotropicbv}

\begin{definition}
A continuous function $\phi: \overline{\Omega} \times \mathbb{R}^2 \rightarrow [0, \infty)$ is called a metric integrand if it satisfies the following conditions:

{\flushleft $(1)$} $v \mapsto \phi(x,v)$ is a norm for every $x$, \\
$(2)$ $\phi$ is bounded and {uniformly} elliptic in $\overline{\Omega}$, i.e.
\begin{equation*}
\exists \,  \lambda,\, \Lambda  > 0 \quad \mbox{s.t.}\quad \forall \, x \in \overline{\Omega}, \quad \forall \, \xi \in \mathbb{R}^2, \quad \lambda |\xi| \leq \phi(x, \xi) \leq \Lambda |\xi|.
\end{equation*}
\end{definition}

Standard examples of metric integrands which appear with relation to the least gradient problem are: the Euclidean case (i.e. $\phi(x,p) = |p|$), the $l_1$ norm $\phi(x,p) = |p_1| + |p_2|$, and the weighted least gradient problem $\phi(x,p) = k(x) |p|$ with $k$ bounded and bounded away from zero (see \cite{Gor2018CVPDE,JMN,Zun}). This last case will be the main focus of our attention in the second half of the paper.

The definition below is specific to the two-dimensional case.

\begin{definition}
Let $\phi: \overline{\Omega} \times \mathbb{R}^2 \rightarrow [0, \infty)$ be a metric integrand. We denote its rotation by $-\frac{\pi}{2}$ in the second variable by
\begin{equation*}
\phi^\perp(x,\xi) := \phi(x, R_{-\frac{\pi}{2}} \xi).
\end{equation*}
It is clear that $\phi^\perp: \overline{\Omega} \times \mathbb{R}^2 \rightarrow [0, \infty)$ is also a metric integrand.
 
\end{definition}

\begin{definition}
Let $\phi: \overline{\Omega} \times \mathbb{R}^2 \rightarrow [0, \infty)$ be a metric integrand. Its polar function is $\phi^0: \overline{\Omega} \times \mathbb{R}^2 \rightarrow [0, \infty)$ defined as
\begin{equation*}
\phi^0 (x, \xi^*) = \sup \, \{ \langle \xi^*, \xi \rangle : \, \xi \in \mathbb{R}^2, \, \phi(x, \xi) \leq 1 \}.
\end{equation*} 
\end{definition}

\begin{definition}
Let $\phi: \overline{\Omega} \times \mathbb{R}^2 \rightarrow [0, \infty)$ be a metric integrand. For a given function $u \in L^1(\Omega)$, we define its $\phi-$total variation in $\Omega$ by the formula:

\begin{equation*}
\int_\Omega |Du|_\phi = \sup \, \bigg\{ \int_\Omega u \, \mathrm{div} (\mathbf{z}) \, dx : \, \phi^0(x,\mathbf{z}(x)) \leq 1 \, \, \, \text{a.e.},\,\, \mathbf{z} \in C_c^1(\Omega)  \bigg\}.
\end{equation*}
Another popular notation for the $\phi-$total variation is $\int_\Omega \phi(x, Du)$. In fact, one can show that $\int_\Omega \phi(x, Du)=\int_\Omega \phi(x,\frac{d Du}{d |Du|})\mathrm{d}|Du|$, where $\frac{d Du}{d |Du|}$ denotes the Radon-Nikodym derivative. We will say that $u \in BV_\phi(\Omega)$ if its $\phi-$total variation is finite {in $\overline{\Omega}$}; furthermore, let us define the $\phi-$perimeter of a set $E$ as
$$P_\phi(E, \Omega) = \int_{\Omega} |D\chi_E|_\phi.$$
If $P_\phi(E, \Omega) < \infty$, we say that $E$ is a set of bounded $\phi-$perimeter in $\Omega$.
\end{definition}

\begin{remark}
Let $\phi: \overline{\Omega} \times \mathbb{R}^2 \rightarrow [0, \infty)$ be a metric integrand. Since $\phi$ is bounded and uniformly elliptic, we have
$$\lambda \int_\Omega |Du| \leq \int_\Omega |Du|_\phi \leq \Lambda \int_\Omega |Du|.$$
In particular, $BV_\phi(\Omega) = BV(\Omega)$ as sets. They are equipped with different (but equivalent) norms.
\end{remark}

Now, we recall the definition and basic properties of Anzellotti pairings introduced in \cite{Anz}. The construction below is usually performed under more general assumptions, but for simplicity we restrict ourselves to the setting we will use in the paper (in Section \ref{sec:structure}). For $p \geq 1$, denote
\begin{equation*}
X_p(\Omega) = \bigg\{ \mathbf{z} \in L^\infty(\Omega, \mathbb{R}^2): \, \mbox{ div}(\mathbf{z}) \in L^p(\Omega) \bigg\}.
\end{equation*}
Given $\mathbf{z} \in X_p(\Omega)$ and $w \in BV(\Omega) \cap L^{p'}(\Omega)$, we define the functional $(\mathbf{z}, Dw): C_c^\infty(\Omega) \rightarrow \mathbb{R}$ by the formula
\begin{equation*}
\langle (\mathbf{z}, Dw), \varphi \rangle = - \int_\Omega w \, \varphi \, \mathrm{div}(\mathbf{z}) \, dx - \int_\Omega w \, \mathbf{z} \cdot \nabla \varphi \, dx.
\end{equation*}
The distribution $(\mathbf{z}, Dw)$ turns out to be a Radon measure on $\Omega$. It is a generalization of the pointwise product $\mathbf{z} \cdot \nabla w$; namely, for $w \in W^{1,1}(\Omega) \cap L^{p^\prime}(\Omega)$, we have
\begin{equation*}
\int_\Omega (\mathbf{z}, Dw)\,\varphi = \int_{\Omega} \mathbf{z} \cdot \nabla w \, \varphi \, dx, \quad \forall \, \varphi \in C^{\infty}_c(\Omega).
\end{equation*}
The following Proposition summarizes the most important properties of the pairing $(\mathbf{z}, Du)$.

\begin{proposition}\label{prop:boundonAnzellottipairing}
Suppose that $\phi$ is a metric integrand. Let $\mathbf{z} \in X_p(\Omega)$ and \,$u \in BV(\Omega) \cap L^{p'}(\Omega)$. Then, for any Borel set $B \subset \Omega$, we have
\begin{equation*}
\bigg| \int_{B} (\mathbf{z}, Du) \bigg| \leq \| \phi^0(x,\mathbf{z}(x)) \|_{L^\infty(\Omega)} \int_B |Du|_\phi.
\end{equation*} 
In particular, $(\mathbf{z}, Du) \ll |Du|$ as measures in $\Omega$. Moreover, there exists a function $[\mathbf{z},n] \in L^\infty(\partial\Omega)$ such that $\| [\mathbf{z},n] \|_{L^\infty(\partial\Omega)} \leq  \| \mathbf{z} \|_{L^\infty(\Omega,\mathbb{R}^2)}$ and the following {\it Gauss-Green formula} holds:
\begin{equation*}
\int_\Omega u \, \mathrm{div}(\mathbf{z}) \, dx + \int_\Omega (\mathbf{z}, Du) = \int_{\partial\Omega} [\mathbf{z},n]  u \, d\mathcal{H}^{1}.
\end{equation*}
\end{proposition}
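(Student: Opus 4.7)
The plan is to proceed by smooth approximation, using the fact that for smooth $u$ the pairing $(\mathbf{z}, Du)$ reduces to the pointwise product $\mathbf{z}\cdot \nabla u\,dx$, which directly satisfies the desired pointwise estimate via the definition of the polar function. The main work is then to transfer this bound through the limit and to construct the normal trace.

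First, I would establish the pointwise duality inequality $|\mathbf{z}(x)\cdot \xi|\leq \phi^0(x,\mathbf{z}(x))\,\phi(x,\xi)$ for every $\xi\in\mathbb{R}^2$, which is immediate from the definition of $\phi^0$ as the polar of $\phi(x,\cdot)$. For $u\in W^{1,1}(\Omega)\cap L^{p'}(\Omega)$, this pointwise inequality integrated on any Borel set $B$ yields
\begin{equation*}
\Bigl|\int_B \mathbf{z}\cdot\nabla u\,dx\Bigr|\leq \|\phi^0(\cdot,\mathbf{z})\|_{L^\infty(\Omega)}\int_B \phi(x,\nabla u)\,dx = \|\phi^0(\cdot,\mathbf{z})\|_{L^\infty(\Omega)}\int_B |Du|_\phi,
\end{equation*}
so the proposition holds for smooth $u$.

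Next, I would approximate a general $u\in BV(\Omega)\cap L^{p'}(\Omega)$ in the anisotropic strict topology: by the Anzellotti--Giaquinta construction one finds $u_n\in C^\infty(\Omega)\cap BV(\Omega)$ such that $u_n\to u$ in $L^{p'}(\Omega)$, $Du_n \rightharpoonup Du$ weakly as measures, and $\int_\Omega |Du_n|_\phi \to \int_\Omega |Du|_\phi$. The last convergence, which is a Reshetnyak-type continuity statement, is available because $\phi$ is a continuous metric integrand and $u_n\to u$ strictly in $BV$. Using the definition of $(\mathbf{z},Du)$ as a distribution and the $L^{p'}$-convergence of $u_n$ to $u$, one checks that $(\mathbf{z},Du_n)=\mathbf{z}\cdot\nabla u_n\,dx$ converges to $(\mathbf{z},Du)$ in the sense of distributions; a standard uniform control then upgrades this to weak convergence of Radon measures. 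Passing to the limit in the smooth estimate against arbitrary nonnegative $\varphi\in C_c(\Omega)$ and then exhausting a Borel set $B$ from inside by open sets and from outside by compact sets gives the anisotropic bound, and in particular $(\mathbf{z},Du)\ll |Du|$ because $|Du|_\phi \leq \Lambda |Du|$.

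For the Gauss--Green formula, the plan is to recognize that the right-hand side of the defining formula
\begin{equation*}
\langle (\mathbf{z},Dw),\varphi\rangle = -\int_\Omega w\,\varphi\,\mathrm{div}(\mathbf{z})\,dx - \int_\Omega w\,\mathbf{z}\cdot\nabla\varphi\,dx
\end{equation*}
makes sense for $\varphi\in C^\infty(\overline{\Omega})$ (not only $C_c^\infty(\Omega)$), since $\mathrm{div}(\mathbf{z})\in L^p(\Omega)$, $\mathbf{z}\in L^\infty$ and $u\in BV(\Omega)\cap L^{p'}(\Omega)$. Extending the definition in this way, one defines a linear functional
\begin{equation*}
T(\varphi)\;=\;\int_\Omega u\,\varphi\,\mathrm{div}(\mathbf{z})\,dx + \int_\Omega u\,\mathbf{z}\cdot\nabla\varphi\,dx + \int_\Omega \varphi\,(\mathbf{z},Du)
\end{equation*}
which vanishes on $C_c^\infty(\Omega)$ by the very definition of $(\mathbf{z},Du)$, so it only sees the trace of $\varphi$ on $\partial\Omega$. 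Testing with smooth $\varphi$ whose trace approximates an arbitrary $L^1(\partial\Omega)$ function and using the estimate $|T(\varphi)|\leq \|\mathbf{z}\|_\infty \|\varphi\|_{W^{1,1}}$ (obtained by choosing harmonic extensions) one shows that $T$ factors through the trace and is represented by an $L^\infty$ function $[\mathbf{z},n]$ with $\|[\mathbf{z},n]\|_{L^\infty(\partial\Omega)}\leq \|\mathbf{z}\|_{L^\infty(\Omega,\mathbb{R}^2)}$.

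The main obstacle will be the anisotropic approximation step: while the Euclidean Anzellotti theory is classical, here I need the Reshetnyak continuity of the functional $u\mapsto \int_\Omega |Du|_\phi$ under strict convergence in $BV$, together with a compatibility between this strict convergence and weak convergence of the measures $(\mathbf{z},Du_n)$. Once this is granted, the extension of $(\mathbf{z},Du)$ to a Radon measure and the derivation of the Gauss--Green formula proceed by the standard Anzellotti machinery.
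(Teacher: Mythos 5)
The paper does not actually prove this Proposition: it is recalled as a known preliminary and attributed to \cite{Anz,CF} (with the anisotropic bound going back to the Amar--Bellettini/Anzellotti circle of ideas), so there is no internal proof to compare against. Judged on its own, your outline of the first part is the standard and correct argument: the pointwise polar inequality $|\mathbf{z}(x)\cdot\xi|\le\phi^0(x,\mathbf{z}(x))\,\phi(x,\xi)$, the Anzellotti--Giaquinta strict approximation, Reshetnyak continuity of $u\mapsto\int\phi(x,Du)$ for a continuous elliptic integrand, and the upgrade from test functions to Borel sets via the measure inequality $|(\mathbf{z},Du)|\le\|\phi^0(\cdot,\mathbf{z})\|_\infty\,|Du|_\phi$. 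That part I would accept as a faithful sketch of the classical proof.

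The Gauss--Green part has a genuine gap. The function $[\mathbf{z},n]$ in the statement depends on $\mathbf{z}$ alone, whereas your functional $T$ has $u$ baked into it; showing that $T$ ``factors through the trace of $\varphi$'' would at best produce a boundary representation of the form $\int_{\partial\Omega}h\,\varphi$ with $h$ depending on both $\mathbf{z}$ and $u$, and you still owe the identification $h=[\mathbf{z},n]\,Tu$ with a single $u$-independent normal trace. The standard route is to first define $[\mathbf{z},n]$ by $\langle[\mathbf{z},n],\varphi\rangle:=\int_\Omega\varphi\,\mathrm{div}(\mathbf{z})\,dx+\int_\Omega\mathbf{z}\cdot\nabla\varphi\,dx$ for $\varphi\in C^\infty(\overline\Omega)$, prove the sharp bound $|\langle[\mathbf{z},n],\varphi\rangle|\le\|\mathbf{z}\|_\infty\|\varphi\|_{L^1(\partial\Omega)}$ by testing with Gagliardo-type extensions concentrated in an $\varepsilon$-neighbourhood of $\partial\Omega$ (so that the interior terms vanish in the limit and only $\int|\nabla\varphi|\le(1+\varepsilon)\|\varphi\|_{L^1(\partial\Omega)}$ survives), and only then derive the Gauss--Green identity for general $u$ by the strict approximation, using that strict convergence also forces $Tu_n\to Tu$ in $L^1(\partial\Omega)$. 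Your proposed estimate $|T(\varphi)|\le\|\mathbf{z}\|_\infty\|\varphi\|_{W^{1,1}}$ is moreover not correct as written (the term $\int_\Omega u\,\varphi\,\mathrm{div}(\mathbf{z})\,dx$ is controlled by $\|u\|_{p'}\|\mathrm{div}(\mathbf{z})\|_p\|\varphi\|_\infty$, not by $\|\mathbf{z}\|_\infty\|\varphi\|_{W^{1,1}}$), and harmonic extensions are the wrong tool: they neither map $L^1(\partial\Omega)$ into $W^{1,1}(\Omega)$ nor yield the constant $\|\mathbf{z}\|_\infty$ required for the $L^\infty$ bound on the normal trace.
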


The function $[\mathbf{z},n]$ has the interpretation of the normal trace of the vector field $\mathbf{z}$ at the boundary and it coincides with the classical normal trace if $\mathbf{z}$ is sufficiently smooth.

\subsection{Weighted least gradient problem}\label{sec:weightedlgp}

In this paper, we are interested in the following problem
\begin{equation}\label{wLGP-preliminaries}
\inf\bigg\{\int_{\Omega} k(x)|Du|\,:\,u \in BV(\Omega), \, u|_{\partial\Omega} = g \bigg\},
\end{equation}
where $k$ is sufficiently smooth. In the isotropic case, a standard assumption for existence of solutions is strict convexity of $\Omega$. In the anisotropic case, the standard assumption is called the barrier condition and is a local property at every point $x_0 \in \partial\Omega$. In the definition below, denote $\phi(x,p) = k(x) |p|$. Some of the results below are valid for more general $\phi$, but for the sake of presentation, we restrict ourselves to the two-dimensional weighted case.

\begin{definition}[Barrier condition]\label{Barrier Condition definition}
We say that $\Omega$ satisfies the barrier condition if the following condition holds: for every $x_0 \in \partial\Omega$, there exists $r_0 > 0$ such that for all $r < r_0$, if \,$V$ is a minimizer of
\begin{equation}\label{eq:barriercondition}
\inf \bigg\{ P_\phi(W, \mathbb{R}^2): \quad W \subset \Omega, \quad (\Omega \backslash W) \subset B(x_0,r) \bigg\},
\end{equation}
then
\begin{equation*}
\partial V \cap \partial \Omega \cap B(x_0,r) = \emptyset.
\end{equation*}
\end{definition}

Under this assumption, existence of solutions was proved for continuous boundary data by Jerrard, Moradifam and Nachman in \cite{JMN} (actually, the result holds also when the boundary datum is continuous a.e. with respect to the codimension one Hausdorff measure on $\partial\Omega$, see \cite{Mor} or \cite{Gor2021IUMJ}). However, in order to use optimal transport techniques, we will later need a slighly stronger assumption of strict geodesic convexity of $\Omega$. In the Euclidean case, these two concepts are closely related: in two dimensions, the barrier condition is equivalent to strict convexity of $\Omega$; in higher dimensions, the barrier condition is something between convexity and strict convexity of $\Omega$, see \cite{Gor2021IUMJ}. We will comment on the relationship between these two concepts at the end of Section \ref{sec:beckmongeequivalence}.

The literature regarding regularity of solutions to the weighted least gradient problem is very thin. In the paper \cite{JMN}, the authors have shown that for sufficiently smooth $k$, in low dimensions continuity of boundary data implies continuity of solutions. This result was extended to any dimension by Zuniga in \cite{Zun} under the assumption $k \in C^2(\overline{\Omega})$. However, if $k$ is not regular enough, then continuity of solutions may break down: several examples for Lipschitz weights appear in \cite{LMSS} and an example for $C^{1,\alpha}$ weights with $\alpha < 1$ appears in \cite{JMN}.

Finally, let us recall another formulation of the least gradient problem, first introduced by Maz\'on, Rossi and Segura de Le\'on in the isotropic case (see \cite{MRL}). 

\begin{definition}
We say that $u \in BV(\Omega)$ is a function of $\phi$-least gradient if for all $v \in BV(\Omega)$ such that $v|_{\partial\Omega}=0$, we have
$$ \int_\Omega |Du|_\phi \leq \int_\Omega |D(u+v)|_\phi.$$
\end{definition}

Using this definition, we may reformulate the weighted least gradient problem as follows: the goal is to find a $\phi$-least gradient function with prescribed trace. The following characterisation in terms of Anzellotti pairings was proved in \cite{MRL} in the isotropic case; in the form below it was proved by Maz\'on in \cite{Maz}.

\begin{theorem}\label{thm:mrlcharacterisation}
Suppose that $u \in BV(\Omega)$ is a function of $\phi$-least gradient. Then, there exists a vector field \,$\mathbf{z} \in L^\infty(\Omega, \mathbb{R}^2)$ such that $\| \phi^0(x,\mathbf{z}(x)) \|_\infty \leq 1$, $\mbox{div}(\mathbf{z}) = 0$ in the distributional sense and 
$$ (\mathbf{z},Du) = |Du|_\phi  \qquad \mbox{ as measures in } \Omega. $$
\end{theorem}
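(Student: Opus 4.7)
The plan is to identify $\mathbf{z}$ as a solution to a dual problem and conclude via Gauss-Green. Since $u$ is a $\phi$-least gradient function, it minimizes $\int_\Omega |Dv|_\phi$ among $BV$ competitors $v$ with the same trace $g := u|_{\partial\Omega}$. Using the Gauss-Green formula from Proposition \ref{prop:boundonAnzellottipairing}, for any divergence-free field $\mathbf{z}\in L^\infty(\Omega,\mathbb{R}^2)$ with $\phi^0(x,\mathbf{z}(x))\leq 1$ a.e.\ and any admissible competitor $v$, one has
$$\int_{\partial\Omega}[\mathbf{z},n]\,g\,d\mathcal{H}^1 \;=\; \int_\Omega (\mathbf{z},Dv) \;\leq\; \int_\Omega |Dv|_\phi.$$
In particular, the supremum
$$D := \sup\bigg\{\int_{\partial\Omega}[\mathbf{z},n]\,g\,d\mathcal{H}^1 : \mathbf{z}\in L^\infty(\Omega,\mathbb{R}^2),\,\phi^0(x,\mathbf{z}(x))\leq 1 \text{ a.e.},\,\mathrm{div}(\mathbf{z})=0\bigg\}$$
is bounded above by $\int_\Omega |Du|_\phi$.

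The crux is to show that $D = \int_\Omega |Du|_\phi$ and that this supremum is attained. The natural route is a Fenchel-Rockafellar argument: view the primal as a convex minimisation over the affine space $\{v\in BV(\Omega): v|_{\partial\Omega}=g\}$ and compute the Fenchel conjugate of $v\mapsto \int_\Omega |Dv|_\phi$ in a pairing coupling $v$ with its distributional gradient. The conjugate concentrates precisely on vector fields with $\phi^0(x,\mathbf{z}(x))\leq 1$, while the constraint that $v$ moves only within the affine subspace produces the divergence-free condition on $\mathbf{z}$. Lower semi-continuity and coercivity of the primal (guaranteed by the uniform ellipticity of $\phi$) then yield attainment of the dual. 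An alternative, more constructive route is to regularise: solve $\min_{W^{1,2}}\int_\Omega(\phi(x,\nabla v)^2+\varepsilon|\nabla v|^2)\,dx$ with trace $g$, and extract $\mathbf{z}$ as a weak-$\ast$ limit of the Euler-Lagrange vector fields $\mathbf{z}_\varepsilon$.

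Once an optimal $\mathbf{z}$ is in hand, Gauss-Green applied with $v=u$ combined with $\mathrm{div}(\mathbf{z})=0$ gives $\int_\Omega (\mathbf{z},Du) = D = \int_\Omega |Du|_\phi$. The bound $\bigl|\int_B(\mathbf{z},Du)\bigr|\leq \int_B |Du|_\phi$ from Proposition \ref{prop:boundonAnzellottipairing} shows that $|Du|_\phi - (\mathbf{z},Du)$ is a nonnegative Radon measure of zero total mass, hence identically zero; this yields the required equality $(\mathbf{z},Du)=|Du|_\phi$ as measures on $\Omega$. The main obstacle is the no-duality-gap statement: $BV$ is not reflexive and the conjugation must be carried out in a sufficiently rich dual pairing to guarantee that the extremal $\mathbf{z}$ lies in $L^\infty$; the boundary trace (which in general cannot be realised as a $W^{1,1}$ extension) must be handled with care, and it is precisely here that the anisotropic analogue of the reasoning in \cite{MRL,Maz} is essential.
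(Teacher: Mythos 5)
The paper does not prove this theorem; it quotes it from \cite{MRL} (isotropic case) and \cite{Maz} (anisotropic case), so there is no internal proof to compare against. Your overall architecture is the right one and matches the strategy of those references: weak duality via the Gauss--Green formula, strong duality with dual attainment, and then the upgrade from equality of total masses to equality of measures using the bound $\bigl|\int_B(\mathbf{z},Du)\bigr|\leq\int_B|Du|_\phi$ of Proposition \ref{prop:boundonAnzellottipairing}. The first and last of these steps are correctly and completely argued in your write-up.

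The gap is that essentially the entire content of the theorem sits in the middle step, and you only gesture at it. Dual attainment is in fact the easy half: the admissible set is weak-$*$ compact in $L^\infty(\Omega,\mathbb{R}^2)$, the constraint $\mathrm{div}\,\mathbf{z}=0$ passes to the limit, and $[\mathbf{z}_n,n]\rightharpoonup[\mathbf{z},n]$ weakly-$*$ in $L^\infty(\partial\Omega)$ by testing the Gauss--Green formula against smooth functions. The genuinely hard half is the absence of a duality gap, $D=\int_\Omega|Du|_\phi$. Your Fenchel--Rockafellar plan cannot be run directly on the problem with the hard trace constraint $v|_{\partial\Omega}=g$: the value function of that problem is not lower semicontinuous in the pairings where the conjugation is computable, and the standard remedy is to pass to the relaxed functional $\int_\Omega|Dv|_\phi+\int_{\partial\Omega}\phi(x,\nu)\,|Tv-g|\,d\mathcal{H}^1$ and to prove first, by a collar/pasting argument, that a $\phi$-least gradient function minimizes this relaxed functional with $g=Tu$; only then does the duality machinery of \cite{MRL,Maz} produce the divergence-free $\mathbf{z}$ with $\phi^0(x,\mathbf{z}(x))\leq 1$. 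You name this obstacle but do not resolve it, and the alternative regularisation route is likewise left without the uniform estimates and without identifying the limit of the pairings $\mathbf{z}_\varepsilon\cdot\nabla u_\varepsilon$. As written, the proposal is a correct outline with the decisive step missing.
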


We also recall the celebrated Bombieri-de Giorgi-Giusti theorem. It was first proved in the Euclidean case in \cite{BGG} and in the form below it was proved by Maz\'on in \cite{Maz}.

\begin{theorem}\label{thm:anisotropicbgg}
Suppose that $u \in BV(\Omega)$ is a $\phi$-least gradient function. Then, for all $t \in \mathbb{R}$, the function $\chi_{\{ u \geq t \}}$ is also a function of $\phi-$least gradient.
\end{theorem}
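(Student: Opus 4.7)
The plan is to apply the calibration characterization in Theorem \ref{thm:mrlcharacterisation} together with an anisotropic coarea formula. By Theorem \ref{thm:mrlcharacterisation}, since $u$ is of $\phi$-least gradient there exists a vector field $\mathbf{z} \in L^\infty(\Omega, \mathbb{R}^2)$ with $\|\phi^0(x, \mathbf{z}(x))\|_\infty \leq 1$, $\mathrm{div}(\mathbf{z}) = 0$, and $(\mathbf{z}, Du) = |Du|_\phi$ as measures in $\Omega$. My goal is to show that the same $\mathbf{z}$ calibrates every superlevel set, i.e.\ that $(\mathbf{z}, D\chi_{\{u \geq t\}}) = |D\chi_{\{u \geq t\}}|_\phi$ as measures for every $t \in \mathbb{R}$. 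Once this is in hand, the least-gradient property of $\chi_{\{u \geq t\}}$ follows immediately: given any competitor $w = \chi_{\{u \geq t\}} + v$ with $v \in BV(\Omega)$ and $v|_{\partial \Omega} = 0$, the Gauss-Green formula of Proposition \ref{prop:boundonAnzellottipairing} combined with $\mathrm{div}(\mathbf{z}) = 0$ gives $\int_\Omega (\mathbf{z}, D\chi_{\{u \geq t\}}) = \int_{\partial \Omega} [\mathbf{z}, n]\, \chi_{\{u \geq t\}} \, d\mathcal{H}^1 = \int_{\partial \Omega} [\mathbf{z}, n]\, w \, d\mathcal{H}^1 = \int_\Omega (\mathbf{z}, Dw)$, and the last quantity is bounded above by $\int_\Omega |Dw|_\phi$ by the same Proposition.

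To establish the calibration identity, I would use two parallel coarea-type decompositions. The classical anisotropic coarea formula gives $|Du|_\phi(B) = \int_{-\infty}^{\infty} |D\chi_{\{u > t\}}|_\phi(B) \, dt$ for every Borel set $B \subset \Omega$. A coarea identity for the Anzellotti pairing (which holds since $\mathrm{div}(\mathbf{z}) = 0$, in particular $\mathbf{z} \in X_p(\Omega)$ for every $p$) yields analogously $(\mathbf{z}, Du)(B) = \int_{-\infty}^{\infty} (\mathbf{z}, D\chi_{\{u > t\}})(B) \, dt$. Together with the hypothesis $(\mathbf{z}, Du) = |Du|_\phi$ and the pointwise bound $(\mathbf{z}, D\chi_{\{u > t\}}) \leq |D\chi_{\{u > t\}}|_\phi$ supplied by Proposition \ref{prop:boundonAnzellottipairing}, this forces equality $(\mathbf{z}, D\chi_{\{u > t\}}) = |D\chi_{\{u > t\}}|_\phi$ for a.e.\ $t \in \mathbb{R}$.

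The final step is to upgrade from ``a.e.\ $t$ and $\{u > t\}$'' to ``every $t$ and $\{u \geq t\}$''. Here I would exploit $\chi_{\{u \geq t\}} = \lim_{s \uparrow t} \chi_{\{u > s\}}$ pointwise, which gives $L^1$ convergence by dominated convergence; choosing $s_n \uparrow t$ from the full-measure set of good parameters, lower semi-continuity of $|\cdot|_\phi$ together with a stability property of the Anzellotti pairing along such $L^1$-convergent sequences with uniformly bounded $\phi$-variation (again using $\mathrm{div}(\mathbf{z}) = 0$) allows me to pass to the limit and recover the calibration identity for every $t$. The main obstacle I anticipate is precisely this passage to the limit: establishing the coarea decomposition of the anisotropic Anzellotti pairing and controlling it under $L^1$-convergence of characteristic functions of superlevel sets is the technical heart of the argument, and it likely requires a careful adaptation of the Chen-Frid and Maz\'on-Segura de Le\'on theory of pairings to the present weighted/anisotropic setting. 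Once the calibration identity is available for every $t$, the Gauss-Green computation from the first paragraph closes the proof.
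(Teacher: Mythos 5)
Your proposal is correct, and it is essentially the proof of this result in the literature: the paper itself does not prove Theorem~\ref{thm:anisotropicbgg} but recalls it from \cite{BGG} and \cite{Maz}, and the calibration-plus-coarea argument you describe is exactly the one in \cite{Maz} (and \cite{MRL} in the isotropic case). It is also precisely the computation the authors run themselves in the Corollary of Section~\ref{sec:structure}, where they invoke \cite{Anz}*{Proposition 2.7} for the coarea decomposition of the pairing and the same $L^1$-approximation in $t$. Two clarifications on the step you single out as the technical heart. First, no adaptation of the Chen--Frid/Maz\'on--Segura de Le\'on pairing theory is actually required: the pairing $(\mathbf{z},Dw)$ here is the ordinary Anzellotti pairing, and the anisotropy enters only through the constraint $\phi^0(x,\mathbf{z})\leq 1$ and the comparison measure $|Dw|_\phi$ in Proposition~\ref{prop:boundonAnzellottipairing}. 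Second, the passage to the limit is easier than you fear, provided you aim only at the identity $(\mathbf{z},D\chi_{\{u\geq t\}})=|D\chi_{\{u\geq t\}}|_\phi$ \emph{as measures in the open set} $\Omega$: since $\mathrm{div}(\mathbf{z})=0$, for $\varphi\in C_c^\infty(\Omega)$ one has $\langle(\mathbf{z},Dw),\varphi\rangle=-\int_\Omega w\,\mathbf{z}\cdot\nabla\varphi\,dx$, so the pairings $(\mathbf{z},D\chi_{\{u>s_n\}})$ converge weakly-* to $(\mathbf{z},D\chi_{\{u\geq t\}})$ as soon as $\chi_{\{u>s_n\}}\to\chi_{\{u\geq t\}}$ in $L^1(\Omega)$; combining this with lower semicontinuity of the $\phi$-total variation and the inequality of Proposition~\ref{prop:boundonAnzellottipairing} yields the measure identity in $\Omega$ (the needed uniform perimeter bound comes for free from Gauss--Green applied to the calibrated sets $\{u>s_n\}$). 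This is all your final computation uses, because there you apply the Gauss--Green formula to $\chi_{\{u\geq t\}}$ itself and to the competitor, never to the approximating sequence. Be careful not to phrase the limit passage as convergence of total masses or of the boundary traces $T\chi_{\{u>s_n\}}\to T\chi_{\{u\geq t\}}$: that convergence is false for general $L^1$-convergent sequences of bounded variation, but, as just explained, it is never needed.
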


\subsection{Equivalence in the Euclidean case}\label{sec:equivalenceeuclidean}

In this Section, we present the relationship between the least gradient problem, the Beckmann problem and the classical Monge-Kantorovich problem in the Euclidean case, together with some properties of solutions to these problems. Recall that throughout the paper $\Omega \subset \mathbb{R}^2$ denotes an open bounded set with Lipschitz boundary. In this subsection, we assume additionally that $\Omega$ is convex.
We start with the relationship between the least gradient problem
\begin{equation}\label{eq:leastgradientproblem}
\inf\bigg\{ \int_\Omega |Du| \,:\,u \in BV(\Omega),\, u|_{\partial\Omega} = g \bigg\},
\end{equation}
and the Beckmann problem
\begin{equation}\label{eq:beckmannproblem}
\min \bigg\{ \int_{\overline{\Omega}} |v|\,:\, v \in \mathcal{M}(\overline{\Omega}, \mathbb{R}^2),\,\, \mathrm{div} \, v = f \bigg\}.
\end{equation}
Here, $g \in BV(\partial\Omega)$, and the boundary condition in \eqref{eq:leastgradientproblem} is understood in the sense of traces. The divergence condition in \eqref{eq:beckmannproblem} is understood in the distributional sense; we have $\mathrm{div} \, (v) = 0$ in $\Omega$ and $[v, n] = f$ on $\partial\Omega$, where $[v,n]$ denotes the weak normal trace of a vector field whose divergence is integrable (see Section \ref{sec:anisotropicbv}). Suppose that $f \in \mathcal{M}(\partial\Omega)$ satisfies a mass balance condition  $\int_{\partial\Omega} \mathrm{d}f = 0$. Then, Problem \eqref{eq:beckmannproblem} admits a solution (see \cite{San2015}). Moreover, Problem \eqref{eq:leastgradientproblem} reaches a minimum as soon as $\Omega$ is strictly convex (see \cite{Gor2018CVPDE}). Notice that such $f$ and $g$ are in a one-to-one correspondence (up to an additive constant in $g$) via the relation $f = \partial_\tau g$. It was observed in \cite{GRS2017NA} that Problems \eqref{eq:leastgradientproblem} and \eqref{eq:beckmannproblem} are closely related. Namely, if we take an admissible function $u \in BV(\Omega)$ in \eqref{eq:leastgradientproblem}, then $v = R_{\frac{\pi}{2}} Du$ is admissible in \eqref{eq:beckmannproblem}. Indeed, in dimension two, a rotation of a gradient by $\frac{\pi}{2}$ is a divergence-free field in $\Omega$ and it interchanges the normal and tangent components at the boundary. In the other direction, given a vector field $v \in L^1(\Omega,\mathbb{R}^2)$ admissible in \eqref{eq:beckmannproblem}, we can recover $u \in W^{1,1}(\Omega)$ admissible in \eqref{eq:leastgradientproblem}. In \cite{DS}, the authors noticed that this result may be improved: if $v \in \mathcal{M}(\overline{\Omega}, \mathbb{R}^2)$ is such that $|v|(\partial\Omega) = 0$, then there exists $u \in BV(\Omega)$ such that $v = R_{\frac{\pi}{2}} Du$ and $[v,n] = \partial_\tau(Tu)$. In particular, notice that $|v| = |Du|$ as measures on $\overline{\Omega}$; hence, the infimal values in \eqref{eq:leastgradientproblem} and \eqref{eq:beckmannproblem} coincide. We will see later that a solution $v$ for Problem \eqref{eq:beckmannproblem} gives zero mass to the boundary as soon as $\Omega$ is strictly convex (this implies existence of a solution for Problem \eqref{eq:leastgradientproblem}; see \cite{DS}). Let us sum up these considerations as follows.

\begin{theorem}\label{thm:lgpbeckmannequivalence} Suppose that $\Omega$ is convex. Then, the problems \eqref{eq:leastgradientproblem} and \eqref{eq:beckmannproblem} are equivalent in the following sense: \\
(1) Their infimal values coincide, i.e. $\inf \eqref{eq:leastgradientproblem} = \min \eqref{eq:beckmannproblem}$. \\
(2) Given a solution $u \in BV(\Omega)$ of \eqref{eq:leastgradientproblem}, we can construct a solution $v \in \mathcal{M}(\overline{\Omega}, \mathbb{R}^2)$ of \eqref{eq:beckmannproblem}; moreover, $v = R_{\frac{\pi}{2}} Du$. \\
(3) Given a solution $v \in \mathcal{M}(\overline{\Omega}, \mathbb{R}^2)$ of \eqref{eq:beckmannproblem} with $|v|(\partial\Omega) = 0$, we can construct a solution $u \in BV(\Omega)$ of \eqref{eq:leastgradientproblem}; moreover, $v = R_{\frac{\pi}{2}} Du$.
\end{theorem}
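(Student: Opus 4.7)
The plan is to prove the theorem by establishing each of the three claims through the two-dimensional duality between gradients and divergence-free fields given by the rotation $R_{\pi/2}$, and then closing the loop on the equality of infima by exhibiting a concrete optimal $v$ with no boundary mass.

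First I would prove claim (2) together with one inequality $\inf \eqref{eq:beckmannproblem} \leq \inf \eqref{eq:leastgradientproblem}$. Starting from an arbitrary admissible $u \in BV(\Omega)$ with $Tu = g$, set $v := R_{\pi/2} Du$, extended by zero to $\overline{\Omega}$. The planar identity $\mathrm{div}(R_{\pi/2} \nabla u) = 0$ passes to measures via test functions, so $\mathrm{div}(v) = 0$ in $\Omega$. The key step is identifying the weak normal trace $[v,n]$: applying the Gauss--Green formula from Proposition \ref{prop:boundonAnzellottipairing} to $v$ against a smooth test function, and using that $R_{\pi/2}$ swaps tangential and normal components along $\partial\Omega$, one verifies $[v,n] = \partial_\tau(Tu) = \partial_\tau g = f$. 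Since $|v| = |Du|$ as measures on $\overline{\Omega}$ (with $|v|(\partial\Omega) = 0$ by construction), we obtain both admissibility and $|v|(\overline{\Omega}) = \int_\Omega |Du|$, which proves (2) and the $\leq$ inequality.

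Next I would prove claim (3) together with the reverse inequality. Given $v \in \mathcal{M}(\overline{\Omega}, \mathbb{R}^2)$ admissible in \eqref{eq:beckmannproblem} with $|v|(\partial\Omega) = 0$, the measure $v$ is concentrated in $\Omega$ and is distributionally divergence-free there. Since $\Omega$ is convex, hence simply connected, a distributional Poincar\'e lemma yields $u \in BV(\Omega)$, unique up to a constant, with $R_{\pi/2} Du = v$ in $\Omega$, and the total-variation identity $|Du|(\Omega) = |v|(\overline{\Omega})$ follows from the no-boundary-mass hypothesis. The trace of $u$ is pinned down via the same Gauss--Green identification used in step one: the equality $[v,n] = \partial_\tau g$ forces $\partial_\tau(Tu) = \partial_\tau g$ on $\partial\Omega$, and fixing the additive constant so that the mean of $Tu$ matches that of $g$ gives $Tu = g$. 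Therefore $u$ is admissible in \eqref{eq:leastgradientproblem} with $\int_\Omega |Du| = |v|(\overline{\Omega})$, which proves (3) and implies $\inf \eqref{eq:leastgradientproblem} \leq |v|(\overline{\Omega})$ for such $v$.

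To close claim (1), I would exhibit at least one optimal $v$ for \eqref{eq:beckmannproblem} with $|v|(\partial\Omega) = 0$ when $\Omega$ is convex. For this I would use the known representation of Beckmann minimizers as transport densities of optimal plans $\Lambda$ for the Monge--Kantorovich problem \eqref{KantoIntro} between $f^+$ and $f^-$. In a convex planar domain one can select $\Lambda$ so that the transport rays are line segments joining boundary points whose relative interiors lie in $\Omega$; the induced transport density then charges no mass on the one-dimensional set $\partial\Omega$. Feeding this $v$ into the construction of step two produces a competitor $u$ with $\int_\Omega |Du| = \min \eqref{eq:beckmannproblem}$, which combined with step one yields $\inf \eqref{eq:leastgradientproblem} = \min \eqref{eq:beckmannproblem}$.

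The main obstacle is the Poincar\'e-lemma construction in the purely measure-valued setting of step two: one must produce a genuine $BV$ potential from a divergence-free vector measure, and the hypothesis $|v|(\partial\Omega) = 0$ is precisely what ensures that no information is lost to the boundary when inverting the rotation and reading off the trace of $u$. Handling this carefully, and coordinating the two compatible definitions of the normal trace (one via Anzellotti pairings and one arising from the divergence constraint on $v$), is the technical heart of the argument; the remaining pieces reduce to bookkeeping once this is in place.
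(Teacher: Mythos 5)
Your treatment of parts (2) and (3) follows essentially the same route as the paper: the rotation $R_{\frac{\pi}{2}}$ sends admissible $u$ to admissible $v$ with equal mass (this is Proposition \ref{prop:admissibility}), and the measure-valued Poincar\'e lemma recovering $u \in BV(\Omega)$ from a divergence-free $v$ with $|v|(\partial\Omega)=0$ is exactly Proposition \ref{prop:measurerecovery}, which the paper proves by mollifying $v$, integrating the rotated mollification, and passing to the limit weakly* (and then strictly) in $BV(\Omega)$. The genuine gap is in your proof of part (1). You derive the inequality $\inf \eqref{eq:leastgradientproblem} \leq \min \eqref{eq:beckmannproblem}$ by claiming that on a convex planar domain one can always select an optimal plan $\Lambda$ whose transport rays are segments with relative interiors contained in $\Omega$, so that the associated transport density gives no mass to $\partial\Omega$. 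This fails when $\Omega$ is merely convex: if $\partial\Omega$ contains a flat segment and both $f^+$ and $f^-$ charge that segment, the cyclical-monotonicity/potential constraint forces every optimal plan to move mass along the segment itself, so $\sigma_\Lambda(\partial\Omega)>0$ for \emph{every} optimal $\Lambda$, and hence every optimal flow charges the boundary. This is exactly the classical configuration (square domain, data supported on one edge) in which \eqref{eq:leastgradientproblem} admits no minimizer; accordingly, the paper only asserts that optimal flows give zero mass to $\partial\Omega$ when $\Omega$ is \emph{strictly} convex.

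The conclusion of (1) is nevertheless true for merely convex (indeed contractible) domains, but it must be proved by approximation rather than by producing a boundary-avoiding optimal flow. The paper's argument (see the proof of Theorem \ref{thm:anisotropicequivalence}) is: take a minimizing sequence $v_n$ for \eqref{eq:beckmannproblem}, mollify it to obtain another minimizing sequence $\widetilde{v_n} \in L^1(\Omega,\mathbb{R}^2)$, and apply Proposition \ref{prop:L1recovery} to each $\widetilde{v_n}$ to produce $u_n \in W^{1,1}(\Omega)$ admissible in \eqref{eq:leastgradientproblem} with $\int_\Omega |\nabla u_n|\,dx = \int_\Omega |\widetilde{v_n}|\,dx$; letting $n\to\infty$ gives $\inf\eqref{eq:leastgradientproblem} \leq \min\eqref{eq:beckmannproblem}$ with no hypothesis on where an optimal flow lives. (This step itself needs care so that mollification preserves the normal-trace constraint, but it avoids your obstruction entirely.) If you replace your third step by this approximation argument, the remainder of your outline goes through; as written, your proof of (1) only covers the strictly convex case.
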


Now, we turn to the equivalence between the Beckmann problem \eqref{eq:beckmannproblem}
and the following Monge-Kantorovich problem:
\begin{equation}\label{eq:kantorovich}
\min \bigg\{ \int_{\overline{\Omega} \times \overline{\Omega}} |x-y| \, \mathrm{d}\Lambda\, : \, \Lambda \in \mathcal{M}^+(\overline{\Omega} \times \overline{\Omega}), \, (\Pi_x)_{\#}\Lambda = f^+ \,\, \mathrm{and} \,\, (\Pi_y)_{\#} \Lambda = f^- \bigg\}.
\end{equation}
This equivalence is standard in the optimal transport theory (see for instance \cite[Chapter 4]{San2015}), but we will describe it shortly for completeness. Here, $\Omega \subset \mathbb{R}^2$ is a bounded open convex set, $f \in \mathcal{M}(\overline{\Omega})$ and $f = f^+ - f^-$ is its decomposition into a positive and negative part. In order for the problem to be well defined, we assume that the mass balance condition $f^+(\overline{\Omega}) = f^-(\overline{\Omega})$ holds. In particular, this setting covers our case, when the measures are concentrated on $\partial\Omega$.

First, let us recall a few standard results in the optimal transport theory. The fact that 
\begin{equation}\label{eq:dualisotropic}
\sup\bigg\{\int_{\overline{\Omega}} \psi\,\mathrm{d}(f^+ - f^-)\,:\,\psi \in  \mathrm{Lip}_1(\overline{\Omega})\bigg\} 
\end{equation}
is the dual problem to \eqref{eq:kantorovich} is well-known, see for instance \cite{San2015,Vil}. As a corollary of its proof, we get that there exist solutions to both problems, and that any optimal transport plan $\Lambda$ and any Kantorovich potential $\psi$ satisfy the following equality:
$$\int_{\overline{\Omega} \times \overline{\Omega}} (|x-y| - (\psi(x) - \psi(y))) \, \mathrm{d}\Lambda(x,y) =0,$$
which implies that 
$$\psi(x) - \psi(y) = |x-y| \quad \mbox{on\, spt}(\Lambda).$$ 
If $\psi$ is a Kantorovich potential, we call any maximal segment \,$\left[x,y\right]$ satisfying \,$\psi(x)-\psi(y)=|x-y|$ a 
{\it transport ray}. The Kantorovich potential is in general not unique, but frame of transport rays does not depend on the choice of $\psi$ (at least on the support of $\Lambda$). Moreover, an optimal transport plan $\Lambda$ has to move the mass along the transport rays.

Now, we turn our attention to the equivalence between the Kantorovich problem \eqref{eq:kantorovich} and the Beckmann problem \eqref{eq:beckmannproblem}. First, let us see that for any $v \in \mathcal{M}(\overline{\Omega},\mathbb{R}^d)$ admissible in the Beckmann problem \eqref{eq:beckmannproblem}, we have that for any $C^1$ function $\phi$ with $|\nabla \phi| \leq 1$,
\begin{equation*}
|v|(\overline{\Omega}) = \int_{\overline{\Omega}} 1 \, \mathrm{d}|v| \geq \int_{\overline{\Omega}} (-\nabla \phi) \cdot \mathrm{d}v = \int_{\overline{\Omega}} \phi \, \mathrm{d}f.
\end{equation*}
Hence, $\min \eqref{eq:beckmannproblem} \geq \max \eqref{eq:dualisotropic} = \min \eqref{eq:kantorovich}$ (the supremum in \eqref{eq:dualisotropic} is taken for Lipschitz functions, but we may approximate them uniformly by $C^1$ functions). On the other hand, given an optimal transport plan $\Lambda$, we may construct a vector measure $v_\Lambda \in \mathcal{M}(\overline{\Omega},\mathbb{R}^d)$ defined by the formula
\begin{equation*}\label{eq:definitionofpgamma}
\langle v_\Lambda,\xi \rangle :=\int_{\overline{\Omega} \times \overline{\Omega}} \int_0^1  \xi(\omega_{x,y}(t)) \cdot \omega_{x,y}'(t)  \, \mathrm{d}t \, \mathrm{d}\Lambda (x,y),
\end{equation*}
for all $\xi \in C(\overline{\Omega}, \mathbb{R}^d)$. Here, $\omega_{x,y}(t) = (1-t)x + ty$ is the constant-speed parametrisation of $[x,y]$. By taking $\xi = \nabla \phi$, it is immediate that $v_\Lambda$ satisfies the divergence constraint. The total mass of $v_\Lambda$ will be estimated using the {\it transport density} $\sigma_\Lambda \in \mathcal{M}^+(\overline{\Omega})$, which is defined by the formula
\begin{equation*}\label{eq:definitionofsigma}
\langle \sigma_\Lambda,\phi \rangle := \int_{\overline{\Omega} \times \overline{\Omega}} \int_0^1   \phi (\omega_{x,y}(t)) |\omega_{x,y}'(t)|\, \mathrm{d}t \, \mathrm{d}\Lambda (x,y),
\end{equation*}
for all $\phi \in C(\overline{\Omega})$. The vector measure $v_\Lambda$ and the scalar measure $\sigma_\Lambda$ are related in the following way: if $\psi$ is a Kantorovich potential, we have
\begin{equation*}
\omega_{x,y}'(t) = y - x = - |x-y| \frac{x-y}{|x-y|} = - |x-y| \nabla \psi(\omega_{x,y}(t)),
\end{equation*}
for all $t \in (0,1)$ and $(x,y) \in \mathrm{spt}(\Lambda)$. Thus, $\langle v_\Lambda, \xi \rangle = \langle \sigma_\Lambda, -\xi \cdot \nabla \psi \rangle$, for all $\xi \in C(\overline{\Omega},\mathbb{R}^d)$, so we have 
\begin{equation*}
v_\Lambda = -\nabla \psi \cdot \sigma_\Lambda.
\end{equation*}
Hence, $v_\Lambda$ is absolutely continuous with respect to $\sigma_\Lambda$ and $|v_\Lambda| = \sigma_\Lambda$. Then, one has
\begin{equation*}
\min \eqref{eq:kantorovich} = \int_{\overline{\Omega} \times \overline{\Omega}} |x-y| \, \mathrm{d}\Lambda = \int_{\overline{\Omega} \times \overline{\Omega}} \int_0^1 |\omega_{x,y}'(t)| \, \mathrm{d}t \, \mathrm{d}\Lambda(x,y) = \sigma_\Lambda(\overline{\Omega}) = |w_\Lambda|(\overline{\Omega}) \geq \min \eqref{eq:beckmannproblem}.
\end{equation*}
This implies that $\min \eqref{eq:kantorovich} = \min \eqref{eq:beckmannproblem}$ and that from an optimal transport plan $\Lambda$ we can construct a solution to the Beckmann problem \eqref{eq:beckmannproblem}. 
On the other hand, it can be shown that every solution to \eqref{eq:beckmannproblem} is of the form $v = v_\Lambda$ for some optimal transport plan $\Lambda$, see \cite[Theorem 4.13]{San2015}. We summarize the above discussion in the following Theorem.

\begin{theorem}\label{thm:beckmannkantorovichequivalence} Suppose that $\Omega$ is convex. Then, the problems \eqref{eq:kantorovich} and \eqref{eq:beckmannproblem} admit solutions, and are equivalent in the following sense: \\
(1) Their minimal values coincide, i.e. $\min \eqref{eq:kantorovich} = \min \eqref{eq:beckmannproblem}$. \\
(2) Given an optimal transport plan $\Lambda \in \mathcal{M}^+(\overline{\Omega} \times \overline{\Omega})$ in \eqref{eq:kantorovich}, we can construct a solution $v_\Lambda \in \mathcal{M}(\overline{\Omega}, \mathbb{R}^d)$ to \eqref{eq:beckmannproblem}.
\\
(3) Given a solution $v \in \mathcal{M}(\overline{\Omega}, \mathbb{R}^2)$ to \eqref{eq:beckmannproblem}, we can construct an optimal transport plan $\Lambda \in \mathcal{M}^+(\overline{\Omega} \times \overline{\Omega})$ in \eqref{eq:kantorovich} such that $v = v_\Lambda$.
\end{theorem}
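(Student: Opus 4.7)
The plan is to establish the three claims by organizing carefully the computation sketched in the paragraphs immediately before the statement. First, to prove $\min \eqref{eq:beckmannproblem} \geq \min \eqref{eq:kantorovich}$, I would take any admissible $v$ in the Beckmann problem together with any $\phi \in C^1(\overline{\Omega})$ satisfying $|\nabla \phi| \leq 1$. The divergence constraint combined with integration by parts yields $|v|(\overline{\Omega}) \geq \int_{\overline{\Omega}} (-\nabla \phi) \cdot \mathrm{d}v = \int_{\overline{\Omega}} \phi \, \mathrm{d}f$, so maximizing over such $\phi$, approximating $\mathrm{Lip}_1$ functions by smooth ones, and invoking the Kantorovich duality $\max \eqref{eq:dualisotropic} = \min \eqref{eq:kantorovich}$ closes this direction.

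Next, for construction (2) and the reverse inequality, I would start from an optimal plan $\Lambda$ and define the flow $v_\Lambda$ and the scalar transport density $\sigma_\Lambda$ by the formulas already introduced. Testing $v_\Lambda$ against $\xi = \nabla \phi$ and applying the fundamental theorem of calculus along each segment $\omega_{x,y}$ shows immediately that $\mathrm{div}\, v_\Lambda = f$ in the distributional sense, so $v_\Lambda$ is admissible in \eqref{eq:beckmannproblem}. Fixing a Kantorovich potential $\psi$ for the dual problem \eqref{eq:dualisotropic}, the fact that each segment $[x,y]$ with $(x,y) \in \mathrm{spt}(\Lambda)$ is a transport ray gives $\omega_{x,y}'(t) = -|x-y|\, \nabla \psi(\omega_{x,y}(t))$, hence the identity $v_\Lambda = -\nabla \psi \cdot \sigma_\Lambda$, and therefore $|v_\Lambda| = \sigma_\Lambda$. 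A direct Fubini computation then gives $|v_\Lambda|(\overline{\Omega}) = \int_{\overline{\Omega} \times \overline{\Omega}} |x-y| \, \mathrm{d}\Lambda = \min \eqref{eq:kantorovich}$; combined with the previous step, this forces equality of the two minima and optimality of $v_\Lambda$ in \eqref{eq:beckmannproblem}.

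For the converse direction (3), I would appeal to Smirnov's decomposition theorem for divergence-constrained vector measures, which represents any Beckmann optimizer $v$ as a superposition of oriented Lipschitz curves whose initial and final endpoint distributions are $f^+$ and $f^-$ respectively. Recording the endpoints of these curves produces a candidate plan $\Lambda \in \mathcal{M}^+(\overline{\Omega} \times \overline{\Omega})$ with the correct marginals; optimality of $v$ together with the equality of infima already proved then forces the curves in this decomposition to travel along straight segments and $\Lambda$ to be optimal in \eqref{eq:kantorovich}, yielding $v = v_\Lambda$. The main obstacle is precisely this last step, which rests on the nontrivial structural theorem of Smirnov rather than on any direct computation; the full argument in the form needed here is given as \cite[Theorem 4.13]{San2015}, and I would simply cite it rather than reprove it.
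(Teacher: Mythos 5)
Your proposal is correct and follows essentially the same route as the paper: the inequality $\min \eqref{eq:beckmannproblem} \geq \min \eqref{eq:kantorovich}$ via duality and integration by parts, the explicit construction of $v_\Lambda$ with the identification $v_\Lambda = -\nabla\psi\cdot\sigma_\Lambda$ and $|v_\Lambda| = \sigma_\Lambda$ for the reverse inequality and part (2), and the citation of \cite[Theorem 4.13]{San2015} for part (3). The paper likewise does not reprove that converse step, so your added remark that it rests on a Smirnov-type decomposition into curves is accurate but not a divergence from the paper's argument.
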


The equivalence between the two-dimensional least gradient problem \eqref{eq:leastgradientproblem} and the Monge-Kantorovich problem \eqref{eq:kantorovich} comes from combining Theorems \ref{thm:lgpbeckmannequivalence} and \ref{thm:beckmannkantorovichequivalence}. Given a solution to \eqref{eq:leastgradientproblem}, we may construct an optimal transport plan $\Lambda$ for \eqref{eq:kantorovich} with $f^\pm = (\partial_\tau g)^\pm$; in the other direction, since $|v_\Lambda| = \sigma_\Lambda$, we may recover a solution to \eqref{eq:leastgradientproblem} from an optimal transport plan $\Lambda$ as soon as the transport density $\sigma_\Lambda$ gives no mass to the boundary, i.e. $\sigma_\Lambda(\partial\Omega) = 0$. An important special case is when $\Omega$ is strictly convex. Using the equivalent formula for transport density, i.e. for every Borel set $A \subset \overline{\Omega}$,
\begin{equation*}\label{eq:definitionofsigmaversiontwo}
\sigma_\Lambda(A) = \int_{\overline{\Omega} \times \overline{\Omega}} \mathcal{H}^1([x,y] \cap A) \, \mathrm{d}\Lambda(x,y),
\end{equation*}
we have that if $\Omega$ is strictly convex, then for any optimal transport plan $\Lambda$ we have that $\sigma_\Lambda(\partial\Omega) = 0$, and the correspondence between problems is one-to-one. This link between the two problems was exploited for the first time in \cite{DS}. For a strictly convex domain $\Omega$, the authors studied the boundary-to-boundary optimal transport problem. If at least one of the measures $f^\pm$ is atomless, then the optimal transport plan is unique and induced by a map, and proved several variants of regularity estimates on the transport density. The main application of these results so far is the $W^{1,p}$ regularity of solutions to the least gradient problem: the estimates on the transport density imply that if $g \in W^{1,p}(\partial\Omega)$ for $p \in [1,2]$, then the unique solution $u$ to \eqref{eq:leastgradientproblem} lies in $W^{1,p}(\Omega)$, provided that $\Omega$ is uniformly convex.

}

\section{Equivalence between the anisotropic least gradient problem and the anisotropic Beckmann problem}\label{sec:lgpbeckequivalence}
  
{ In this Section, we will prove the equivalence between the anisotropic least gradient problem 
\begin{equation*}
\inf\bigg\{ \int_\Omega \phi(x,Du) \,:\,u \in BV(\Omega),\, u|_{\partial\Omega} = g \bigg\},
\end{equation*}
and the anisotropic Beckmann problem
\begin{equation*}
\min \bigg\{ \int_{\overline{\Omega}} \phi^\perp(x,v) \,:\, v \in \mathcal{M}(\overline{\Omega}, \mathbb{R}^2),\,\, \mathrm{div} \, v = f \bigg\}. 
\end{equation*}
Recall that throughout the paper $\Omega \subset \mathbb{R}^2$ denotes an open bounded set with Lipschitz boundary. In this Section, we will additionally require that $\Omega$ is contractible (or equivalently, since we study the planar case, that $\partial\Omega$ is connected). This is unlike the results in \cite{GRS2017NA} for the Euclidean case, which require convexity of the domain. We will generalize the results from \cite{GRS2017NA} in two ways: by allowing anisotropic settings and a wider class of domains. Relaxing the convexity assumption on $\Omega$ is particularly relevant in the anisotropic case; throughout most of the paper, we will instead require that the domain $\Omega$ is geodesically convex}. Hence, we need to prove the equivalence for a more general class of domains.


\begin{proposition}\label{prop:L1recovery}
{ Suppose that $\Omega$ is contractible.} Let $v\in L^1(\Omega, \mathbb{R}^2)$ be such that $\mathrm{div} (v) =0$ in the sense of distributions. Then, there exists $u \in W^{1,1}(\Omega)$ such that $v=R_{\frac{\pi}{2}} \nabla u$. In particular, for any metric integrand $\phi$, we have
$$ \int_\Omega \phi^\perp(x,v) \,dx = \int_\Omega \phi(x,\nabla u) \,dx. $$
Moreover, if \,$v\cdot n|_{\partial\Omega} = f$ and \,$Tu = g$, then we have $f=\partial_\tau g$.
\end{proposition}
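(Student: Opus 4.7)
The plan is to construct $u$ as a \emph{stream function} for $v$, using the 2D Poincar\'e lemma on simply connected domains; contractibility of the planar open set $\Omega$ is equivalent to simple connectedness, which is the right topological hypothesis. Writing the desired relation $v = R_{\pi/2} \nabla u$ as $\nabla u = R_{-\pi/2} v = (v_2, -v_1)$, the integrability condition $\partial_1 v_2 - \partial_2(-v_1) = \mathrm{div}(v) = 0$ is precisely our hypothesis, so the $1$-form $\omega := v_2 \, dx_1 - v_1 \, dx_2$ is distributionally closed on $\Omega$. The distributional de Rham/Poincar\'e lemma then furnishes $u$ with $du = \omega$, and since $\omega$ has $L^1$ coefficients, $\nabla u \in L^1(\Omega)$. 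Alternatively, and more concretely, one exhausts $\Omega$ by smooth, relatively compact, simply connected subdomains $\Omega_n$, constructs smooth stream functions for the mollifications $v \ast \rho_\varepsilon$ on each $\Omega_n$ by classical path integration (the path independence being guaranteed by simple connectedness of $\Omega_n$), normalises by subtracting mean values on a fixed reference ball, and passes to the limit using $L^1$ convergence of the mollifications.

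To upgrade $\nabla u \in L^1(\Omega)$ to $u \in W^{1,1}(\Omega)$, one fixes the additive constant (e.g., by imposing $\int_\Omega u = 0$) and applies the Poincar\'e inequality on the bounded Lipschitz domain $\Omega$. The equality of $\phi$-integrals is then a pointwise identity: from the very definition $\phi^\perp(x,\xi) = \phi(x, R_{-\pi/2} \xi)$, we have a.e.\ in $\Omega$
$$\phi^\perp(x,v(x)) = \phi(x, R_{-\pi/2} v(x)) = \phi(x, \nabla u(x)),$$
and integration yields the claim.

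For the final boundary identification, I would test both sides against an arbitrary $\varphi \in C^1(\overline{\Omega})$. The definition of the weak normal trace together with $\mathrm{div}(v) = 0$ yields $\int_{\partial\Omega} \varphi \, df = \int_\Omega v \cdot \nabla \varphi \, dx$. Writing $v \cdot \nabla \varphi = \nabla u \cdot R_{-\pi/2} \nabla \varphi$, the smooth field $\mathbf{z} := R_{-\pi/2} \nabla \varphi$ is divergence-free, and on $\partial\Omega$ it satisfies $\mathbf{z} \cdot n = \nabla \varphi \cdot R_{\pi/2} n = \partial_\tau \varphi$ with $\tau$ the appropriate rotation of $n$. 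Applying the Gauss-Green formula to the pair $(u, \mathbf{z})$ with $u \in W^{1,1}(\Omega)$ gives
$$\int_\Omega \nabla u \cdot \mathbf{z} \, dx = \int_{\partial\Omega} g \, \partial_\tau \varphi \, d\mathcal{H}^1,$$
and integration by parts along the closed curve $\partial\Omega$ identifies the right-hand side with $\langle \partial_\tau g, \varphi \rangle$ (up to the sign fixed by the orientation convention). Comparing the two chains of equalities gives $f = \partial_\tau g$ as distributions on $\partial\Omega$.

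The main technical obstacle is the construction of $u$ at the $L^1$ level: one cannot use the naive path-integral definition, since $v \in L^1$ need not admit meaningful restrictions to curves. Mollification combined with careful handling of the simply connected exhaustion (ensuring that the approximating domains preserve the topology so that path integrals are well-defined and independent of the path) and a normalisation step to keep the family $u_\varepsilon$ compact in $L^1$ is the key technical point; once $u$ is in hand, the remaining assertions reduce to pointwise algebra and the classical Gauss-Green formula.
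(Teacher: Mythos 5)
Your proposal is correct and follows essentially the same route as the paper: mollify $v$, build stream functions by path integration on the contractible (hence simply connected) domain, normalise by subtracting the mean and use the Poincar\'e inequality to pass to the limit in $W^{1,1}(\Omega)$, then read off the pointwise identity $\phi^\perp(x,v)=\phi(x,\nabla u)$. The only cosmetic difference is in the boundary step, where you apply the Gauss--Green formula directly to the limit pair $(u,\,R_{-\frac{\pi}{2}}\nabla\varphi)$ rather than passing to the limit in the boundary integrals of the smooth approximants; both arguments yield $f=\partial_\tau g$.
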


\begin{proof} 

{\bf Step 1. Definition of an approximating sequence $u^\varepsilon$.} Fix any $x_0 \in \Omega$. Denote $v=(v_1,v_2)$. Let $\varphi_\varepsilon$ (with $\varepsilon > 0$) be a sequence of mollifiers and set $v^\varepsilon$ to be the mollification of $v$, i.e. $v^\varepsilon = v\ast\varphi_\varepsilon:=(v_1^\varepsilon,v_2^\varepsilon)$. Since $\mathrm{div} (v)=0$, we also have $\mathrm{div} (v^\varepsilon) = 0$. We define a sequence of mollified differential forms $\omega^\varepsilon = v^\varepsilon_2 dx_1- v^\varepsilon_1 dx_2$; since $\mathrm{div}(v^\varepsilon) = 0$, we also have $d\omega^\varepsilon = 0$. The domain is contractible, so the differential form $\omega^\varepsilon$ uniquely (up to an additive constant depending on the choice of $x_0$) defines a function $u^\varepsilon \in C^\infty(\overline{\Omega})$ via
$$ u^\varepsilon(x) = \int_{\gamma_x} \omega^\varepsilon, $$
with $\gamma_x$ is any smooth oriented curve from $x_0$ to $x$. In particular, we have $\nabla u^\varepsilon=R_{-\frac\pi2}v^\varepsilon$. 
 
{\bf Step 2. Convergence in $L^1(\Omega)$.} We claim that a modified version of the sequence $u^\varepsilon$ converges in $L^1(\Omega)$. 
Denote by $(u^\varepsilon)_\Omega$ the mean value of $u^\varepsilon$ on $\Omega$. Set $\widetilde{u^\varepsilon} = u^\varepsilon - (u^\varepsilon)_\Omega$ and notice that $\nabla \widetilde{u^\varepsilon} = \nabla u^\varepsilon = R_{-\frac\pi2}v^\varepsilon$. Using the Poincar\'e inequality, we get the following estimate:
\begin{equation}\label{eq:L1approximationestimate}
\int_\Omega |\widetilde{u^\varepsilon}| \, dx = \int_\Omega |u^\varepsilon - (u^\varepsilon)_\Omega| \, dx \leq C(\Omega) \int_\Omega |\nabla u^\varepsilon| \, dx = C(\Omega) \int_\Omega |v^\varepsilon| \, dx.
\end{equation}
Yet, the sequence $v^\varepsilon$ is bounded in $L^1(\Omega,\mathbb{R}^2)$ with $||v^\varepsilon||_{L^1} \leq ||v||_{L^1}$. Hence, we have
\begin{equation*}
\| \widetilde{u^\varepsilon} \|_{W^{1,1}(\Omega)} \leq (C(\Omega) + 1) \| v^\varepsilon \|_{L^1(\Omega)} \leq M.
\end{equation*}
Then, there exists a subsequence $\widetilde{u^{\varepsilon_n}} \rightarrow u$ in $L^1(\Omega)$.

{\bf Step 3. Convergence in $W^{1,1}(\Omega)$.} We recall that $\nabla \widetilde{u^\varepsilon} = R_{-\frac\pi2}v^\varepsilon$. Since $v^\varepsilon$ is a Cauchy sequence in $L^1(\Omega, \mathbb{R}^2)$, $\nabla \widetilde{u^{\varepsilon_n}}$ is also a Cauchy sequence in $L^1(\Omega, \mathbb{R}^2)$, because
\begin{equation*}
\| \nabla \widetilde{u^{\varepsilon_n}} - \nabla \widetilde{u^{\varepsilon_m}} \|_{L^1(\Omega, \mathbb{R}^2)} = \| R_{-\frac\pi2}(v^{\varepsilon_n} -  v^{\varepsilon_m}) \|_{L^1(\Omega, \mathbb{R}^2)} = \| v^{\varepsilon_n} - v^{\varepsilon_m} \|_{L^1(\Omega,\mathbb{R}^2)}.     
\end{equation*}
Hence, $\widetilde{u^{\varepsilon_n}}$ is a Cauchy sequence in $W^{1,1}(\Omega)$, so it converges to $u$ in $W^{1,1}(\Omega)$.
In particular,
we have
$\nabla u=R_{-\frac\pi2}v$ and,
$$ \int_\Omega \phi(x,\nabla u) \, dx = 
\int_\Omega \phi^\perp(x,v)\, dx.$$

{\bf Step 4. The boundary condition.} Finally, we study the relationship between the trace of $f$ and the normal trace of $g$; this part is similar to part of the proof in \cite{GRS2017NA}, but we write it here for completeness. If $\phi \in \mbox{Lip}(\mathbb{R}^2)$, and $\varphi$ is its restriction on $\partial\Omega$, then
\begin{eqnarray*}
\langle f,\varphi\rangle &= & \langle v\cdot n|_{\partial\Omega}, \varphi \rangle = \int_\Omega v \cdot \nabla\phi \,dx = 
\lim_{\varepsilon_n \rightarrow 0} \int_\Omega v^\varepsilon \cdot \nabla\phi \,dx =
\lim_{\varepsilon_n \rightarrow 0} \int_\Omega R_{\frac{\pi}{2}} \nabla \widetilde{u^{\varepsilon_n}} \cdot \nabla\phi \,dx\\
&= & \lim_{\varepsilon_n \rightarrow 0} \int_{\partial\Omega} R_{\frac{\pi}{2}} \nabla \widetilde{u^{\varepsilon_n}} \cdot n\, \varphi\,d\mathcal{H}^1.
\end{eqnarray*}
Due to the smoothness of $\widetilde{u^{\varepsilon_n}}$, we have 
$R_{\frac{\pi}{2}} \nabla \widetilde{u^{\varepsilon_n}} \cdot n = {\partial_\tau \widetilde{u^{\varepsilon_n}}}$. Hence,
\begin{equation*}
 \langle f,\varphi\rangle = \lim_{\varepsilon_n \rightarrow 0} \int_{\partial\Omega} R_{\frac{\pi}{2}} \nabla \widetilde{u^{\varepsilon_n}} \cdot n \,\varphi\,d\mathcal{H}^1
 = \lim_{\varepsilon_n \rightarrow 0} \int_{\partial\Omega} {\partial_\tau \widetilde{u^{\varepsilon_n}}}\, \varphi\,d\mathcal{H}^1 =
 -\lim_{\varepsilon_n \rightarrow 0} \int_{\partial\Omega} T\widetilde{u^{\varepsilon_n}}\,{\partial_\tau \varphi}\,d\mathcal{H}^1. 
\end{equation*}
Since $\widetilde{u^{\varepsilon_n}}$ converges to $u$ in $W^{1,1}(\Omega)$, then the traces converge in $L^1(\partial\Omega)$, i.e., $T \widetilde{u^{\varepsilon_n}} \rightarrow Tu$.
Thus,
\begin{equation*}
 \langle f,\varphi\rangle =- \int_{\partial\Omega} Tu\, {\partial_\tau \varphi}\,\,d\mathcal{H}^1 =- \int_{\partial\Omega} g\, {\partial_\tau \varphi}\,\,d\mathcal{H}^1
 =\left\langle\partial_\tau g, \varphi\right\rangle. \qedhere
\end{equation*}
\end{proof}

In a similar way, we may enlarge the class of admissible vector fields $v$. We will require that $v \in \mathcal{M}(\overline{\Omega}, \mathbb{R}^2)$ is such that $|v|(\partial\Omega) = 0$. The outline of this proof will be very similar to the proof of Proposition \ref{prop:L1recovery}.

\begin{proposition}\label{prop:measurerecovery}
{ Suppose that $\Omega$ is contractible.} Let $v\in \mathcal{M}(\overline{\Omega}, \mathbb{R}^2)$ be such that $\mathrm{div} (v) =0$ in the sense of distributions and $|v|(\partial\Omega) = 0$. Then, there exists $u \in BV(\Omega)$ such that $v=R_{\frac{\pi}{2}} Du$. In particular, for any metric integrand $\phi$, we have
$$ \int_\Omega \phi^\perp(x,v) \,dx = \int_\Omega \phi(x,Du) \,dx.$$
In addition, if \,$v\cdot n|_{\partial\Omega} = f$ and \,$Tu = g$, then we have $f=\partial_\tau g$.
\end{proposition}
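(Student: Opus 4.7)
The plan is to adapt the mollification argument of Proposition \ref{prop:L1recovery}, with the key change being that the approximating gradients converge only in the weak$^*$ sense of Radon measures rather than strongly in $L^1$. The hypothesis $|v|(\partial\Omega)=0$ is precisely what guarantees that no mass is lost when passing from the mollified smooth approximants back to $v$, and moreover it prevents any concentration of $Du$ at $\partial\Omega$ that would obstruct the identification $v=R_{\frac{\pi}{2}}Du$ as measures on $\overline{\Omega}$.

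First I would extend $v$ by zero to $\mathbb{R}^2$ and set $v^\varepsilon=v*\varphi_\varepsilon$ for a standard mollifier $\varphi_\varepsilon$. Letting $\Omega_\varepsilon=\{x\in\Omega:\mathrm{dist}(x,\partial\Omega)>\varepsilon\}$, the set $\Omega_\varepsilon$ is contractible and Lipschitz for small $\varepsilon$, and $v^\varepsilon\in C^\infty(\Omega_\varepsilon,\mathbb{R}^2)$ is divergence-free on $\Omega_\varepsilon$; thus Proposition \ref{prop:L1recovery} applied on $\Omega_\varepsilon$ yields $u^\varepsilon\in C^\infty(\overline{\Omega_\varepsilon})$ with $v^\varepsilon=R_{\frac{\pi}{2}}\nabla u^\varepsilon$. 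Fix a ball $B\Subset\Omega_{\varepsilon_1}$ for some $\varepsilon_1>0$ and set $\widetilde{u^\varepsilon}=u^\varepsilon-(u^\varepsilon)_B$ for $\varepsilon<\varepsilon_1$. Since $\|v^\varepsilon\|_{L^1(\mathbb{R}^2)}\le|v|(\overline{\Omega})$, a localized Poincar\'e inequality on each $\Omega_{\varepsilon_0}\Subset\Omega$ containing $B$ gives a uniform bound of $\widetilde{u^\varepsilon}$ in $BV(\Omega_{\varepsilon_0})$. A diagonal extraction then produces a subsequence $\widetilde{u^{\varepsilon_n}}\to u$ in $L^1_{\mathrm{loc}}(\Omega)$, and by lower semicontinuity of total variation combined with the uniform $L^1$ bound on $v^\varepsilon$, we have $|Du|(\Omega)\le|v|(\overline{\Omega})<\infty$, so $u\in BV(\Omega)$.

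To identify $v=R_{\frac{\pi}{2}}Du$ on $\Omega$, I would test against $\xi\in C_c^\infty(\Omega,\mathbb{R}^2)$: on one hand $\int_\Omega \nabla\widetilde{u^{\varepsilon_n}}\cdot\xi\,dx\to-\int_\Omega u\,\mathrm{div}(\xi)\,dx=\langle Du,\xi\rangle$ by the $L^1_{\mathrm{loc}}$ convergence, while $\int_\Omega R_{-\frac{\pi}{2}}v^{\varepsilon_n}\cdot\xi\,dx\to\int_{\overline{\Omega}}R_{-\frac{\pi}{2}}\xi\cdot dv$ via the weak$^*$ convergence $v^\varepsilon\rightharpoonup^* v$ in $\mathcal{M}(\overline{\Omega})$, itself guaranteed by $|v|(\partial\Omega)=0$. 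Since $|v|(\partial\Omega)=0$, the identity extends as an equality of measures on $\overline{\Omega}$, and the anisotropic total variation identity follows from the definition of $\phi^\perp$ and the fact that $Du$ and $v$ differ only by a pointwise rotation of their Radon-Nikodym density with respect to $|v|=|Du|$. For the boundary relation, I would follow Step 4 of the proof of Proposition \ref{prop:L1recovery}: for $\phi\in\mathrm{Lip}(\mathbb{R}^2)$ with $\varphi=\phi|_{\partial\Omega}$, one writes $\langle f,\varphi\rangle=\int_{\overline{\Omega}}\nabla\phi\cdot dv=\lim_n\int_{\overline{\Omega}}\nabla\phi\cdot v^{\varepsilon_n}\,dx$, integrates by parts on $\Omega_{\varepsilon_n}$, and uses trace convergence of $T\widetilde{u^{\varepsilon_n}}$ together with $Tu=g$ to conclude $\langle f,\varphi\rangle=-\int_{\partial\Omega}g\,\partial_\tau\varphi\,d\mathcal{H}^1=\langle\partial_\tau g,\varphi\rangle$.

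The main obstacle compared with the $L^1$ case is that $\nabla u^\varepsilon$ is no longer Cauchy in $L^1$, so the $W^{1,1}$ convergence of $\widetilde{u^\varepsilon}$ available in Proposition \ref{prop:L1recovery} is lost; the limit only has $BV$ regularity and the gradient convergence is only weak$^*$. A related technical difficulty is that the approximants live on the shrinking subdomains $\Omega_\varepsilon$, so the trace step must be handled by passing from $\partial\Omega_{\varepsilon_n}$ to $\partial\Omega$ in the limit rather than by a direct convergence of traces; the assumption $|v|(\partial\Omega)=0$ is exactly what makes this passage legitimate.
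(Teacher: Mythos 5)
Your construction of $u$ and the identification $v=R_{\frac{\pi}{2}}Du$ follow essentially the same route as the paper: mollify, recover smooth potentials via Proposition \ref{prop:L1recovery}, extract a weak* limit in $BV$, and use $|v|(\partial\Omega)=0$ to pass the identity up to the closure. Your care about where the mollified field is actually divergence-free (only on $\Omega_\varepsilon$, since extending $v$ by zero creates a divergence concentrated near $\partial\Omega$) is a point the paper glosses over, and the diagonal argument on the exhaustion by the sets $\Omega_{\varepsilon_0}$ is a legitimate way to handle it; this part of the proposal is sound.

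The gap is in the last assertion, $f=\partial_\tau g$. In the paper this step hinges on showing that $\widetilde{u^{\varepsilon_n}}\to u$ \emph{strictly} in $BV(\Omega)$, i.e.\ that in addition to $L^1$ convergence one has $\int_\Omega|\nabla\widetilde{u^{\varepsilon_n}}|\,dx\to|Du|(\Omega)$; this follows from the chain $|v|(\Omega)=\lim_n\int_\Omega|v^{\varepsilon_n}|\geq|Du|(\Omega)=|v|(\Omega)$, where the first equality uses $|v|(\partial\Omega)=0$ to rule out loss or concentration of mass at the boundary under mollification. Strict convergence is exactly what makes the traces $T\widetilde{u^{\varepsilon_n}}$ converge to $Tu$ in $L^1(\partial\Omega)$ (the trace operator is continuous for strict, but not for weak*, convergence in $BV$), so that the integration by parts of Step 4 of Proposition \ref{prop:L1recovery} passes to the limit. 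You only invoke lower semicontinuity, which gives $|Du|(\Omega)\leq|v|(\overline{\Omega})$ but not equality of the total variations, and you instead propose to integrate by parts on $\Omega_{\varepsilon_n}$ and ``pass from $\partial\Omega_{\varepsilon_n}$ to $\partial\Omega$.'' That moving-boundary trace limit is precisely the nontrivial point, and it is asserted rather than proved; saying that $|v|(\partial\Omega)=0$ ``makes this passage legitimate'' is the correct intuition but not an argument. Either establish the convergence of total variations and work with traces on the fixed boundary $\partial\Omega$, as the paper does, or supply a genuine proof of the limit over the shrinking boundaries; as written, the identity $f=\partial_\tau g$ is not established.
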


\begin{proof} 
{\bf Step 1. Definition of an approximating sequence $u^\varepsilon$.} We proceed exactly as in the proof of Proposition \ref{prop:L1recovery}; we also keep the same notation. 

{\bf Step 2. Convergence in $L^1(\Omega)$.} The estimate \eqref{eq:L1approximationestimate} is proved on the level of the approximation $\widetilde{u^\varepsilon}$ and remains the same when $v \in \mathcal{M}(\overline{\Omega}, \mathbb{R}^2)$. 
When $v$ is a measure, we also have 
$v^\varepsilon$ is bounded in $L^1(\Omega, \mathbb{R}^2)$ with $||v^\varepsilon||_{L^1(\Omega, \mathbb{R}^2)} \leq ||v||_{\mathcal{M}(\overline{\Omega},\mathbb{R}^2)}$. Hence
$$ \| \widetilde{u^\varepsilon} \|_{W^{1,1}(\Omega)} \leq M.$$
Then, there exists a subsequence $\widetilde{u^{\varepsilon_n}}$ such that $\widetilde{u^{\varepsilon_n}} \rightarrow u$ weakly* in $BV(\Omega)$, i.e. there is a function $u \in BV(\Omega)$ such that $\widetilde{u^{\varepsilon_n}} \rightarrow u$ in $L^1(\Omega)$ and $\nabla \widetilde{u^{\varepsilon_n}} \, d\mathcal{L}^2 \rightharpoonup Du$ weakly* as measures. In particular, we have $v=R_{\frac{\pi}{2}} Du$.
 
{\bf Step 3. Convergence of the anisotropic total variations.} Now, recall that $|v|(\partial\Omega) = 0$; then, also $|\phi^\perp(x,v)|(\partial\Omega) = 0$. 
Hence, one has
$$  \int_\Omega \phi(x,Du) = \int_\Omega \phi^\perp(x,v).$$
Moreover, we have
$$ \int_\Omega |v| = \lim_{\varepsilon_n \rightarrow 0} \int_\Omega |v^{\varepsilon_n}| \, dx = \lim_{\varepsilon_n \rightarrow 0} \int_\Omega |\nabla \widetilde{u^{\varepsilon_n}}| \, dx \geq \int_\Omega |Du| = \int_\Omega |v|,$$
and so, the inequality is in fact an equality and the subsequence $\widetilde{u^{\varepsilon_n}} \rightarrow u$ 
strictly in $BV(\Omega)$. 

{\bf Step 4. The boundary condition.} This step is handled analogously to the corresponding part in the proof of Proposition \ref{prop:L1recovery}. The only difference is that the convergence $T\widetilde{u^{\varepsilon_n}} \rightarrow Tu$ follows from strict convergence of the sequence $\widetilde{u^{\varepsilon_n}}$ to $u$ in place of convergence in norm.
\end{proof}

{ Finally, as a consequence of Step 4 in the proof of Proposition \ref{prop:L1recovery}, we get a converse result to Proposition \ref{prop:L1recovery}. Actually, the next result is valid without the contractibility assumption on $\Omega$, and a proof can be found in \cite[Proposition 3.1]{DG2019}.}

\begin{proposition}\label{prop:admissibility}
Let $u \in BV(\Omega)$ with trace $Tu = g$. Then, $v = R_{\frac{\pi}{2}}\nabla u$ is a vector-valued measure such that $\mathrm{div} (v) = f$, where $f = \partial_\tau g$. In particular, it is an admissible function in \eqref{eq:beckmannproblem}. 
\end{proposition}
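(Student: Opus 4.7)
The plan is to verify the two conditions for admissibility in the Beckmann problem \eqref{eq:beckmannproblem}: that $v$ is divergence-free inside $\Omega$, and that its weak normal trace on $\partial\Omega$ equals $\partial_\tau g$. Since $u \in BV(\Omega)$, the object $v = R_{\pi/2} Du$ is immediately a $\mathbb{R}^2$-valued Radon measure on $\overline{\Omega}$ with $|v| = |Du|$, so only the two compatibility conditions remain to be checked.

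For the interior divergence, I would test against an arbitrary $\varphi \in C_c^\infty(\Omega)$ and compute
\begin{equation*}
\langle \mathrm{div}(v), \varphi\rangle = -\int_\Omega \nabla\varphi \cdot dv = -\int_\Omega R_{-\pi/2}\nabla\varphi \cdot dDu = -\int_\Omega(\partial_2\varphi\, dD_1 u - \partial_1\varphi\, dD_2 u).
\end{equation*}
Using the definition of the BV derivative to push each partial derivative back onto $\varphi$, both terms become $-\int_\Omega u\, \partial_{12}\varphi\, dx$, so their difference vanishes by Schwarz's theorem. Hence $\mathrm{div}(v)=0$ in $\mathcal{D}'(\Omega)$.

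For the boundary condition, the strategy is approximation: choose a sequence $u^{\varepsilon_n} \in C^\infty(\overline{\Omega})$ converging to $u$ strictly in $BV(\Omega)$, which is available because $\partial\Omega$ is Lipschitz; in particular the traces $Tu^{\varepsilon_n}$ converge to $g$ in $L^1(\partial\Omega)$. For each smooth approximant, $v^{\varepsilon_n} := R_{\pi/2}\nabla u^{\varepsilon_n}$ is a classical vector field satisfying $v^{\varepsilon_n}\cdot n = \partial_\tau(Tu^{\varepsilon_n})$ pointwise on $\partial\Omega$. Testing against a Lipschitz extension $\phi$ of a test function $\varphi$ on $\partial\Omega$ and arguing exactly as in Step 4 of the proof of Proposition~\ref{prop:L1recovery}, I would integrate by parts along $\partial\Omega$ to get
\begin{equation*}
\int_\Omega v \cdot \nabla \phi \, d|Du|\text{-sense} = \lim_{n\to\infty}\int_{\partial\Omega} \partial_\tau(Tu^{\varepsilon_n})\, \varphi\, d\mathcal{H}^1 = -\lim_{n\to\infty}\int_{\partial\Omega} Tu^{\varepsilon_n}\, \partial_\tau\varphi\, d\mathcal{H}^1,
\end{equation*}
and then let $n\to\infty$, replacing $Tu^{\varepsilon_n}$ by $g$ in the limit. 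This identifies the distribution $[v,n]$ on $\partial\Omega$ with $\partial_\tau g$.

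The main technical point is securing a smooth approximation that is strict in $BV(\Omega)$ and whose traces converge in $L^1(\partial\Omega)$; this is a standard fact for Lipschitz domains, but it is the only place where anything beyond bookkeeping enters. Once this is in hand, the interior and boundary computations above combine to show that $v \in \mathcal{M}(\overline{\Omega},\mathbb{R}^2)$ satisfies $\mathrm{div}(v) = \partial_\tau g$ in the sense required by the Beckmann problem, completing the proof.
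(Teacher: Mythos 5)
Your argument is correct and matches the paper's intended proof: the paper does not prove this proposition in-line but refers to \cite[Proposition 3.1]{DG2019} and to Step 4 of Proposition \ref{prop:L1recovery}, and your smooth-approximation-plus-boundary-integration-by-parts scheme (together with the direct Schwarz-theorem computation for the interior divergence) is exactly that argument. The only cosmetic point is that the divergence constraint need only be tested against $C^1(\overline{\Omega})$ functions, which lets you pass to the limit in $\int_{\overline{\Omega}} \nabla\phi\cdot \mathrm{d}v^{\varepsilon_n}$ by strict convergence without pairing the measure $v$ against the merely bounded gradient of a Lipschitz test function.
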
 

Now, we state the main Theorem.

\begin{theorem}\label{thm:anisotropicequivalence} { Suppose that $\Omega$ is contractible.} Then, the problems \eqref{eq:leastgradientproblem} and \eqref{eq:beckmannproblem} are equivalent in the following sense: \\
(1) Their infimal values coincide, i.e. $\inf \eqref{eq:leastgradientproblem} = \min \eqref{eq:beckmannproblem}$. \\
(2) Given a solution $u \in BV(\Omega)$ of \eqref{eq:leastgradientproblem}, we can construct a solution $v \in \mathcal{M}(\overline{\Omega}, \mathbb{R}^2)$ of \eqref{eq:beckmannproblem}; moreover, $v = R_{\frac{\pi}{2}} Du$. \\
(3) Given a solution $v \in \mathcal{M}(\overline{\Omega}, \mathbb{R}^2)$ of \eqref{eq:beckmannproblem} with $|v|(\partial\Omega) = 0$, we can construct a solution $u \in BV(\Omega)$ of \eqref{eq:leastgradientproblem}; moreover, $v = R_{\frac{\pi}{2}} Du$.
\end{theorem}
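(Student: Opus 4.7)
The plan is to prove items (2) and (3) essentially directly from Propositions \ref{prop:admissibility} and \ref{prop:measurerecovery}, and then derive (1) from these two constructions together with an approximation argument. The identity that enables every passage between the two problems is $\phi^\perp(x, R_{\frac{\pi}{2}}\xi) = \phi(x, \xi)$, which follows immediately from the definition $\phi^\perp(x,\xi) = \phi(x, R_{-\frac{\pi}{2}}\xi)$.

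For item (2), given a solution $u \in BV(\Omega)$ of \eqref{eq:leastgradientproblem}, I set $v := R_{\frac{\pi}{2}} Du \in \mathcal{M}(\overline{\Omega}, \mathbb{R}^2)$. Proposition \ref{prop:admissibility} guarantees that $v$ is admissible in \eqref{eq:beckmannproblem} with $v \cdot n = \partial_\tau g = f$ on $\partial\Omega$. Writing $Du = \nu_u |Du|$ and observing that $|v|(\partial\Omega) = 0$ (since $Du$ is supported in $\Omega$), the anisotropic costs match via
$$\int_{\overline{\Omega}} \phi^\perp(x, v) = \int_\Omega \phi^\perp\!\left(x, R_{\frac{\pi}{2}} \nu_u\right) d|Du| = \int_\Omega \phi(x, \nu_u)\, d|Du| = \int_\Omega \phi(x, Du).$$
Applying this construction to any minimizing sequence of \eqref{eq:leastgradientproblem} yields the inequality $\min \eqref{eq:beckmannproblem} \leq \inf \eqref{eq:leastgradientproblem}$, and when $u$ itself is a minimizer, the resulting $v$ is optimal in \eqref{eq:beckmannproblem}.

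Item (3) is then the dual construction: given an optimal $v \in \mathcal{M}(\overline{\Omega}, \mathbb{R}^2)$ of \eqref{eq:beckmannproblem} satisfying $|v|(\partial\Omega)=0$, Proposition \ref{prop:measurerecovery} produces $u \in BV(\Omega)$ with $v = R_{\frac{\pi}{2}} Du$, equal anisotropic cost, and (after fixing an additive constant) trace $Tu = g$, so that $u$ is admissible and optimal in \eqref{eq:leastgradientproblem}. To complete item (1), it remains to prove $\inf \eqref{eq:leastgradientproblem} \leq \min \eqref{eq:beckmannproblem}$. When a Beckmann minimizer $v^*$ satisfies $|v^*|(\partial\Omega)=0$ this follows directly from (3); the main obstacle is the general case, since under the sole assumption of contractibility (and without strict convexity) one cannot a priori exclude concentration of optimal flows on $\partial\Omega$. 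I would handle this by constructing a sequence $v^\varepsilon \in L^1(\Omega,\mathbb{R}^2)$ of divergence-free vector fields whose normal traces on $\partial\Omega$ approximate $f$ and whose costs converge to $\min \eqref{eq:beckmannproblem}$, for instance by mollifying $v^*$ inside $\Omega$ and correcting the resulting trace by a small divergence-free corrector adapted to the contractible geometry; then Proposition \ref{prop:L1recovery} produces $u^\varepsilon \in W^{1,1}(\Omega)$ with matching costs, and passing to the limit by standard $BV$-compactness yields a competitor $u$ for \eqref{eq:leastgradientproblem} of cost at most $\min \eqref{eq:beckmannproblem}$. The delicate point is designing the corrector so that it kills boundary concentrations without increasing the anisotropic cost in the limit.
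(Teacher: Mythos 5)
Your proposal is correct and follows essentially the same route as the paper: items (2) and (3) are exactly Propositions \ref{prop:admissibility} and \ref{prop:measurerecovery} combined with the identity $\phi^\perp(x,R_{\frac{\pi}{2}}\xi)=\phi(x,\xi)$, and for item (1) the paper likewise turns a minimizing sequence of \eqref{eq:beckmannproblem} into $L^1$ divergence-free fields with normal trace $f$ and applies Proposition \ref{prop:L1recovery} (the trace-correction issue you flag is precisely the step the paper compresses into ``mollifying this sequence if necessary''). One small remark: the final passage to a limit competitor $u$ via $BV$-compactness is unnecessary and slightly risky (the trace need not be preserved under weak-$*$ convergence); the inequality $\inf\eqref{eq:leastgradientproblem}\le\min\eqref{eq:beckmannproblem}$ already follows from the sequence $u^\varepsilon$ of admissible competitors whose costs tend to $\min\eqref{eq:beckmannproblem}$, provided the corrector makes their normal traces equal to $f$ exactly rather than approximately.
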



\begin{proof}
(1) Suppose that $(u_n)_n \subset BV(\Omega)$ is a minimizing sequence for the problem \eqref{eq:leastgradientproblem}. Then, taking $v_n = R_{\frac{\pi}{2}} Du_n$, we have by Proposition \ref{prop:admissibility} that the functions $v_n$ are admissible in Problem \eqref{eq:beckmannproblem} and we have
\begin{equation*}
\inf \eqref{eq:leastgradientproblem} \leftarrow \int_\Omega \phi(x,Du_n) = \int_\Omega \phi^\perp(x,v_n) \geq \min \eqref{eq:beckmannproblem}.
\end{equation*}
Similarly, suppose that $v_n \in \mathcal{M}(\overline{\Omega}, \mathbb{R}^2)$ is a minimizing sequence for the problem \eqref{eq:beckmannproblem}. Mollifying this sequence if necessary, we may take another minimizing sequence $\widetilde{v_n} \in L^1(\Omega, \mathbb{R}^2)$ for this problem. By Proposition \ref{prop:L1recovery}, there exist functions $u_n \in W^{1,1}(\Omega)$ admissible in Problem \eqref{eq:leastgradientproblem} such that $\widetilde{v_n} = R_{\frac{\pi}{2}} \nabla u_n$. Hence,
\begin{equation*}
\min \eqref{eq:beckmannproblem} \leftarrow \int_\Omega \phi^\perp(x,\widetilde{v_n}) = \int_\Omega \phi(x,\nabla u_n) \geq \inf \eqref{eq:leastgradientproblem}.
\end{equation*}
We obtained that $\inf \eqref{eq:leastgradientproblem} = \min \eqref{eq:beckmannproblem}$.

(2) Let $u \in BV(\Omega)$ be a minimizer of Problem \eqref{eq:leastgradientproblem}. Let $v = R_{\frac{\pi}{2}} \nabla u$; by Proposition \ref{prop:admissibility}, it is an admissible vector field in Problem \eqref{eq:beckmannproblem}. We have
$$ \inf \eqref{eq:leastgradientproblem}=
\int_\Omega \phi(x,Du) = \int_\Omega \phi^\perp(x,v) \geq \min \eqref{eq:beckmannproblem}.$$
Hence, $v$ is a minimizer for Problem \eqref{eq:beckmannproblem}.

(3) Let $v \in \mathcal{M}(\overline{\Omega},\mathbb{R}^2)$ be a minimizer of Problem \eqref{eq:beckmannproblem} such that $|v|(\partial\Omega) = 0$. By Proposition \ref{prop:measurerecovery}, there exists $u \in BV(\Omega)$ admissible in Problem \eqref{eq:leastgradientproblem} and $v = R_{\frac{\pi}{2}} \nabla u$. Hence, one has 
$$ \min \eqref{eq:beckmannproblem}= 
\int_\Omega \phi^\perp(x,v) = \int_\Omega \phi(x,Du) \geq \inf \eqref{eq:leastgradientproblem}.$$
Consequently, this implies that the function $u$ is in fact a solution for the problem \eqref{eq:leastgradientproblem}. 
\end{proof}

{ This concludes the equivalence between the anisotropic least gradient problem and the anisotropic Beckmann problem for any contractible domain. Let us note that if the domain is not contractible, then the results in this section are no longer true even in the Euclidean case (see for instance \cite{DG2019}, where the authors study the least gradient problem on annulus).}

\section{Optimal transport problem with Riemannian cost}\label{sec:beckmongeequivalence}
Let $k$ be a  smooth (say \,$C^{1,1}$) positive function on $\mathbb{R}^2$. We denote by $d_k$ the Riemannian metric associated with $k$:
$$d_k(x,y):=\min\bigg\{\int_0^1 k(\gamma(t))\,|\gamma^\prime(t)|\,\mathrm{d}t\,:\,\gamma \in \mbox{Lip}([0,1],\mathbb{R}^2),\,\,\gamma(0)=x\,\,\,\,\mbox{and}\,\,\,\gamma(1)=y\bigg\}.$$\\
Let $\Omega$ be an open and geodesically convex domain in $\mathbb{R}^2$ (i.e., for any two points $x$ and $y$ in $\overline{\Omega}$, there is a unique geodesic $\gamma$ contained within $\overline{\Omega}$ that joins $x$ and $y$). Let $f^+$ and $f^-$ be two nonnegative Borel measures on $\overline{\Omega}$ such that $f^+(\overline{\Omega})=f^-(\overline{\Omega})$. Then, we consider the Kantorovich problem: 
\begin{equation} \label{Kantorovich problem}
\min\bigg\{\int_{\overline{\Omega} \times \overline{\Omega}} d_k(x,y)\,\mathrm{d}\Lambda(x,y)\,:\,\Lambda \in \mathcal{M}^+(\overline{\Omega} \times \overline{\Omega}),\,\,(\Pi_x)_{\#}\Lambda=f^+\,\,\,\mbox{and}\,\,\,(\Pi_y)_{\#}\Lambda=f^-\bigg\}.
\end{equation}
In fact, Problem \eqref{Kantorovich problem} is the relaxed version of the Monge problem with Riemannian cost:
\begin{equation} \label{Monge problem}
\min\bigg\{\int_{\overline{\Omega}} d_k(x,T(x))\,\mathrm{d} f^+(x)\,:\,\,\,T_{\#}f^+=f^-\bigg\}.
\end{equation}\\
The authors of \cite{FmC} prove existence of an optimal transport map $T$ or equivalently, an optimal transport plan $\Lambda$ which is concentrated on a map, under the assumption that $f^+ \in L^1(\Omega)$. From \cite{San2015,Vil}, the problem \eqref{Kantorovich problem} admits a dual formulation
\begin{equation} \label{eq:dual}
\sup\bigg\{\int_{\overline{\Omega}} \psi\,\mathrm{d}(f^+ - f^-)\,:\,|\psi(x) - \psi(y)| \leq d_k(x,y),\,\,\forall\,\,x,\,y \in \overline{\Omega}\bigg\}.
\end{equation}
Notice that a function $\psi$ is $1-$Lipschitz with respect to the geodesic distance $d_k$ if and only if $|\nabla \psi(x)| \leq k(x)$ for almost every $x$. So, the idea in \cite{FmC} was the following: to obtain an optimal transport map $T$ in the problem \eqref{Monge problem}, it is sufficient to start from a Kantorovich
potential $\psi$ (i.e., a maximizer of the dual problem \eqref{eq:dual}) and then, construct a transport map $T$ in such a way that $\psi$ and $T$ satisfy together the following:  
$$\psi(x) - \psi(T(x))= d_k(x,T(x)),\,\,\mbox{for}\,\,f^+\,\,\mbox{a.e.}\,\,x \in \overline{\Omega}.$$
Anyway, we see that if
$\Lambda$ is an optimal transport plan in Problem \eqref{Kantorovich problem} and if $\psi$ is a Kantorovich potential in Problem \eqref{eq:dual}, then from the duality $\min\eqref{Kantorovich problem}=\sup\eqref{eq:dual}$, we infer that
$$\psi(x) -\psi(y)=d_k(x,y),\,\,\mbox{for all}\,\,(x,y) \in \mbox{spt}(\Lambda).$$
We call any maximal geodesic $\gamma_{x,y}$ between $x$ and $y$ that satisfies the equality $\psi(x) - \psi(y) = d_k(x,y)$ a transport ray. This means that any optimal transport plan $\Lambda$ moves the mass along the transport rays. An important fact is that two different transport rays cannot intersect at an interior point of one of them; to see that, assume $\gamma^+:=\gamma_{x^+,y^+}$ and $\gamma^-:=\gamma_{x^-,y^-}$ are two different transport rays. Without loss of generality, assume that they intersect at $z$, an interior point of $\gamma^-$. The point $z$ either is an interior point of $\gamma^+$ or it is one of its endpoints (by symmetry,
we assume it to be $x^+$). We have  
$$\psi(z) - \psi(y^+) = d_k(z,y^+)\,\,\,\,\mbox{and}\,\,\,\,\psi(x^-) - \psi(z) = d_k(x^-,z).$$
Then,
$$\psi(x^-) - \psi(y^+) =  d_k(x^-,z)+ d_k(z,y^+).$$
Yet, $\psi$ is $1-$Lipschitz with respect to $d_k$. Hence, $z$ belongs to a geodesic $\gamma$ from $x^-$ to $y^+$, which is a concatenation of fragments of $\gamma^+$ and $\gamma^-$.
 But, this is a contradiction as there is a unique minimizing geodesic starting at $x^-$ with initial velocity $\gamma^\prime(0)$ (thanks to the $C^{1,1}$ regularity of the Riemannian metric $k$; see \cite[Chapter 8]{CanSin} or \cite[Chapter 7]{DweikThesis}).
 
In optimal transport theory it is classical to associate with any optimal transport plan $\Lambda$ a
nonnegative measure $\sigma_\Lambda$ on $\overline{\Omega}$, called the {\it{transport density}}, which represents the amount of transport
taking place in each region of $\Omega$. This measure $\sigma_\Lambda$ is defined as follows
\begin{equation} \label{transport density definition}
<\sigma_\Lambda,\phi>:=\int_{\overline{\Omega} \times \overline{\Omega}} \int_{0}^1 \phi(\gamma_{x,y}(t)) \, k(\gamma_{x,y}(t)) \,  |\gamma^\prime_{x,y}(t)| \,\mathrm{d}t\,\mathrm{d}\Lambda (x,y),\,\,\,\,\mbox{for all}\,\,\,\,\phi \in C(\overline{\Omega}),
\end{equation}
where $\gamma_{x,y}$ is the unique geodesic between $x$ and $y$. We note that the measure $\sigma_\Lambda$ is well defined thanks to the fact that $\Omega$ is geodesically convex. Denote by $\mathcal{H}^1_k$ the weighted Hausdorff measure, i.e. $\mathcal{H}^1_k(\gamma):=\int_0^1 k(\gamma(t))\,|\gamma^\prime(t)|\,\mathrm{d}t$ for a Lipschitz curve $\gamma$. From \eqref{transport density definition}, we can see easily that
\begin{equation} \label{transport density definition 1}
\sigma_\Lambda(A):=\int_{\overline{\Omega} \times \overline{\Omega}} \mathcal{H}^1_k(\gamma_{x,y} \cap A)\,\mathrm{d}\Lambda (x,y),\,\,\,\,\mbox{for all Borel set}\,\,A \subset \overline{\Omega}.
\end{equation}
On the other hand, we define a vector measure $v_\Lambda$, which is the vector version of $\sigma_\Lambda$, as follows
\begin{equation} \label{Beckmann minimizer}
<v_\Lambda,\xi>:=\int_{\overline{\Omega} \times \overline{\Omega}} \int_{0}^1 
\xi(\gamma_{x,y}(t)) \cdot
k(\gamma_{x,y}(t))\,  \gamma^\prime_{x,y}(t) \,\mathrm{d}t\,\mathrm{d}\Lambda (x,y),\,\,\,\,\mbox{for all}\,\,\,\,\xi \in C(\overline{\Omega},\mathbb{R}^2).
\end{equation}
Let $\psi$ be a Kantorovich potential in the dual problem \eqref{eq:dual}. Thanks to \cite[Lemma 10]{FmC}, $\psi$ is differentiable at any interior point of a transport ray  
 $\gamma_{x,y}$ 
 and, we have 
$$\gamma_{x,y}^\prime(t)=-k^{-1}(\gamma_{x,y}(t))\, d_k(x,y)\,\frac{\nabla \psi(\gamma_{x,y}(t))}{|\nabla \psi (\gamma_{x,y}(t))|}.$$
This implies that
$$v_\Lambda=-\sigma_\Lambda\,\frac{\nabla \psi}{|\nabla \psi|}.$$ \\
In particular, we have $|v_\Lambda|=\sigma_\Lambda$. From \eqref{transport density definition} and the optimality of the transport plan $\Lambda$, we infer that
$$\int_{\overline{\Omega}} |v_\Lambda|=\sigma_\Lambda(\overline{\Omega})=\int_{\overline{\Omega}\times\overline{\Omega}} d_k(x,y)\,\mathrm{d}\Lambda(x,y)=\min\eqref{Kantorovich problem}.$$
 Fix $\phi \in C^1(\overline{\Omega})$. Taking \,$\xi=k^{-1} \nabla \phi \in C(\overline{\Omega}, \mathbb{R}^2)$ as a test function in the definition \eqref{Beckmann minimizer} of $v_\Lambda$, then we have
\begin{eqnarray*} 
<v_\Lambda, k^{-1} \nabla \phi> &=&\int_{\overline{\Omega} \times \overline{\Omega}} \int_{0}^1  \nabla \phi(\gamma_{x,y}(t)) \cdot \gamma^\prime_{x,y}(t) \,\mathrm{d}t\,\mathrm{d}\Lambda (x,y)\\ 
&=&\int_{\overline{\Omega} \times \overline{\Omega}} \int_{0}^1  \left[\frac{d}{d t} \phi(\gamma_{x,y}(t))\right]\,\mathrm{d}t\,\mathrm{d}\Lambda (x,y)\\
&=&\int_{\overline{\Omega} \times \overline{\Omega}}  \left[\phi(y) - \phi(x)\right]\,\mathrm{d}\Lambda (x,y)
\\
&=&\int_{\overline{\Omega}}  \phi\,\mathrm{d}(f^- - f^+),
\end{eqnarray*}\\
This implies that $\mathrm{div} [k^{-1} v_\Lambda]=f^+ - f^-$. Now, let us consider the weighted Beckmann problem
\begin{equation} \label{weighted beckmann problem}
\min \bigg\{ \int_{\overline{\Omega}} k|v| \,:\, v \in \mathcal{M}(\overline{\Omega}, \mathbb{R}^2),\,\, \mathrm{div} \, v = f \bigg\}.
\end{equation}
 Similarly to the Euclidean case, on geodesically convex domains we have an equivalence between the Beckmann problem and the optimal transport problem. By the reasoning above, we infer that
$$
\min \eqref{weighted beckmann problem} \,\,\leq  \int_{\overline{\Omega}} k |k^{-1} v_\Lambda|\,\, =  \int_{\overline{\Omega}} |v_\Lambda|\,\,=\,\, \min\eqref{Kantorovich problem}.$$
Yet, it is easy to see that the reverse inequality also holds, since for any vector measure $v$ such that $\mathrm{div} (v)=f$ and any smooth function $\psi$ such that $|\nabla \psi| \leq k$, one has 
$$\int_{\overline{\Omega}} \psi\,\mathrm{d}(f^+-f^-)=-\int_{\overline{\Omega}} \nabla \psi \cdot \mathrm{d} v \leq \int_{\overline{\Omega}} k \,\mathrm{d} |v|.$$
Hence, we get that
$$
\min \eqref{weighted beckmann problem} \,\,=\,\, \min\eqref{Kantorovich problem}.$$\\
Moreover, we infer that \,$k^{-1}v_\Lambda$ solves the Beckmann problem \eqref{weighted beckmann problem}. In fact, one can prove (see \cite[Proposition 2.3]{DS} in the Euclidean case) that every optimal flow $v$ of Problem \eqref{weighted beckmann problem} is of the form $v=k^{-1}v_\Lambda$, for some optimal transport plan $\Lambda$ of Problem \eqref{Kantorovich problem}.  Let us note that in \cite{Prat} the author proved the equivalence between the Beckmann problem and the optimal transport problem in a more general setting, but then the definition of the transport density has to be altered to allow for splitting of the transport along multiple transport rays. Since in our regularity results we rely on the uniqueness of the geodesic between any two given points, let us present a sketch of the proof of an analogue of \cite[Proposition 2.3]{DS} in the Riemannian case.

Let $\mathcal{C}$ be the set of absolutely continuous curves $\gamma:[0,1] \mapsto \overline{\Omega}$. We call a traffic plan any nonnegative measure $\eta$ on $\mathcal{C}$ such that $(e_0)_ {\#}\eta=f^+$ and $(e_1)_ {\#}\eta=f^-$, where for all $t \in [0,1]$ we denote $e_t(\gamma):=\gamma(t)$. We define the traffic flow $v_\eta$ and the traffic intensity $i_\eta$ as follows:  
\begin{equation*} 
<v_\eta,\xi>:=\int_{\mathcal{C}} \int_{0}^1  \xi(\gamma(t)) \cdot k(\gamma(t))\, \gamma^\prime(t) \,\mathrm{d}t\,\mathrm{d}\eta (\gamma),\,\,\,\,\mbox{for all}\,\,\,\,\xi \in C(\overline{\Omega},\mathbb{R}^2),
\end{equation*} 
and
\begin{equation*}
<i_\eta,\phi>:=\int_{\mathcal{C}} \int_{0}^1  \phi(\gamma(t))\,k(\gamma(t))\, |\gamma^\prime(t)| \,\mathrm{d}t\,\mathrm{d}\eta(\gamma),\,\,\,\,\mbox{for all}\,\,\,\,\phi \in C(\overline{\Omega}).
\end{equation*} \\
We note that $\mathrm{div} [k^{-1}\,v_\eta]=f^+ - f^-$; in particular, it is admissible in Problem \eqref{weighted beckmann problem}. Let $v$ be an optimal flow for Problem \eqref{weighted beckmann problem}. Similarly to \cite[Lemma 2.2]{DS}, one can show that there is a traffic plan $\eta$ such that 
$$\int_{\overline{\Omega}} \mathrm{d}|kv-  v_\eta| + \int_{\overline{\Omega}} \,\mathrm{d}i_\eta=\int_{\overline{\Omega}} k\,\mathrm{d}|v|.$$
Note that $|v_\eta| \leq i_\eta$. Since $k^{-1} v_\eta$ is admissible in Problem \eqref{weighted beckmann problem} while $v$ minimizes Problem \eqref{weighted beckmann problem}, then we get
\begin{equation}\label{eq:boundonieta}
\int_{\overline{\Omega}} \,\mathrm{d}i_\eta \geq \int_{\overline{\Omega}} k\,\mathrm{d}|v|.    
\end{equation}
Moreover, we have
$$\int_{\overline{\Omega}} k\,\mathrm{d}|v| \leq \int_{\overline{\Omega}} \mathrm{d}|k v-v_\eta| 
+ \int_{\overline{\Omega}} \,\mathrm{d}|v_\eta|  \leq\int_{\overline{\Omega}} \mathrm{d}|kv-  v_\eta|
+ \int_{\overline{\Omega}} \,\mathrm{d}i_\eta=\int_{\overline{\Omega}} k\,\mathrm{d}|v|.$$
 Hence, all the inequalities above are in fact equalities. But then, equation \eqref{eq:boundonieta} implies that
$$ v=k^{-1}v_\eta\qquad \mbox{and}\qquad |v_\eta|=i_\eta. $$
Set $\Lambda_\eta:=(e_0,e_1)_{\#}\eta$. Then, it is clear that $\Lambda_\eta$ is a transport plan between $f^+$ and $f^-$. In addition, we have
\begin{equation*} 
\int_{\overline{\Omega}}\mathrm{d}|v_\eta|=\int_{\mathcal{C}} \int_{0}^1 k(\gamma(t))\,| \gamma^\prime(t)| \,\mathrm{d}t\,\mathrm{d}\eta (\gamma) \geq \int_{\mathcal{C}} d_k(\gamma(0),\gamma(1))\,\mathrm{d}\eta (\gamma)=\int_{\overline{\Omega}}d_k(x,y)\,\mathrm{d}\Lambda_\eta.
\end{equation*}
Yet, $\min\eqref{weighted beckmann problem}=\min\eqref{Kantorovich problem}$ and $k^{-1}v_\eta$ minimizes Problem \eqref{weighted beckmann problem}. Hence, the inequality above is an equality, which implies that for $\eta-$a.e. $\gamma \in \mathcal{C}$, $d_k(\gamma(0),\gamma(1))=\int_{0}^1 k(\gamma(t))| \gamma^\prime(t)| \,\mathrm{d}t$ and so, $\gamma$ is the unique geodesic between $\gamma(0)$ and $\gamma(1)$. Consequently, $v=k^{-1}v_\eta=k^{-1}v_{\Lambda_\eta}$ and $\Lambda_\eta$ is an optimal transport plan for Problem \eqref{Kantorovich problem}.\\



The $L^p$ summability of the transport density $\sigma_\Lambda$ was already considered in many papers \cite{San,DePas1,DePas2,DePas3}. However, in all these works, the authors just consider the Euclidean case (i.e., $k \equiv 1$). More precisely, they show that for all $p \in [1,\infty]$ the transport density $\sigma_\Lambda$ belongs to $L^p(\Omega)$ as soon as $f^\pm \in L^p(\Omega)$. Moreover, the authors of \cite{DS} have already considered the case where the source and target measures are singular ($f^\pm$ are two measures concentrated on $\partial\Omega$): they show (again, under the assumption that $k \equiv 1$) that if $f^\pm \in L^p(\partial\Omega)$, then the transport density $\sigma_\Lambda$ between them is in $L^p(\Omega)$ provided that $p \leq 2$ and $\Omega$ is uniformly convex. In this paper, we will study the $L^p$ summability of the transport density $\sigma_\Lambda$ in the case where $f^\pm$ are concentrated on the boundary $\partial\Omega$ and $k$ is a positive $C^{1,1}$ Riemannian metric. \\

Let us come back to the question of existence and uniqueness of an optimal transport map $T$ for the Monge problem \eqref{Monge problem}. We note that this map $T$ (when it exists) is not necessarily unique and, the Kantorovich problem \eqref{Kantorovich problem} may have many different solutions as  well. Under the assumption that $f^+ \in L^1(\Omega)$, one can pick a special optimal transport plan $\Lambda$ which will be induced by a transport map $T$, and this map turns out to be a solution for the Monge problem \eqref{Monge problem}. Yet, in this paper, we consider the case where $f^+$ and $f^-$ are two nonnegative measures which are concentrated on $\partial\Omega$ and so, it is not clear if the Monge problem \eqref{Monge problem} reaches a minimum or not. Before studying existence and uniqueness of minimizers for Problems \eqref{Kantorovich problem} \& \eqref{Monge problem}, we need to introduce the following:
\begin{definition}
We say that $\Omega$ is geodesically strictly convex if given any two points in $\overline{\Omega}$, the unique minimizing geodesic between them lies in the interior of \,$\Omega$, possibly except for its endpoints.
\end{definition}
Then, we have the following existence and uniqueness result (the proof is similar to the one given in \cite[Proposition 2.5]{DS}, but for the sake of completeness we will introduce here its adaptation to the Riemannian case):
\begin{proposition}\label{prop:transportplan}
Assume that $\Omega$ is geodesically strictly convex.  
Then, there is a unique optimal transport plan
$\Lambda$, between $f^+$ and $f^-$, which will be induced by a transport map $T$, provided that $f^+$ is non-atomic. This map $T$ turns out to be the unique optimal transport map in the Monge problem \eqref{Monge problem}. 
\end{proposition}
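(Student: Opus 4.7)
The strategy is to adapt the Euclidean argument of \cite[Proposition 2.5]{DS}, exploiting the fact that Riemannian transport rays, like their Euclidean counterparts, are disjoint simple arcs that cannot meet at an interior point of either of them. Fix a Kantorovich potential $\psi$ for the dual problem \eqref{eq:dual}. Any optimal plan $\Lambda$ is concentrated on pairs $(x,y)$ with $\psi(x)-\psi(y)=d_k(x,y)$, i.e.\ on the union of transport rays. To establish uniqueness it therefore suffices to show that for $f^+$-a.e.\ $x$ there passes a single transport ray; one may then define $T(x)$ to be its opposite endpoint and conclude that every optimal $\Lambda$ equals $(\mathrm{id},T)_{\#}f^+$.

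The central step is to prove that the set $B \subset \overline{\Omega}$ of points from which at least two distinct transport rays emanate is at most countable, so that $f^+(B)=0$ by atomlessness of $f^+$. Geodesic strict convexity guarantees that every transport ray lies in $\Omega$ except possibly for its two endpoints on $\partial\Omega$, and the $C^{1,1}$ regularity of $k$ implies that a geodesic is uniquely determined by its starting point and initial velocity; hence distinct rays at $x$ correspond to distinct unit tangent directions. The non-crossing property proved earlier in this section forbids any further transport ray from entering the open "wedge" between two such directions sufficiently close to $x$. Using the exponential map at $x$ to transfer the picture to Euclidean geometry locally, one can associate to each $x\in B$ a pair of rational unit directions strictly separating two of its rays; the disjointness of these small wedges at distinct branch points then yields an injection of $B$ into a countable set.

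Having established that $B$ is $f^+$-negligible, define $T\colon \mathrm{spt}(f^+)\setminus B\to \mathrm{spt}(f^-)$ by sending $x$ to the other endpoint of the unique transport ray through it. For any optimal plan $\Lambda$, the support condition forces $y=T(x)$ for $\Lambda$-a.e.\ $(x,y)$, so $\Lambda=(\mathrm{id},T)_{\#}f^+$, proving uniqueness of the Kantorovich optimal plan and the fact that it is induced by $T$. For the Monge problem \eqref{Monge problem}, any competitor $S$ with $S_{\#}f^+=f^-$ gives rise to an admissible plan $(\mathrm{id},S)_{\#}f^+$ whose cost coincides with its Monge cost and is therefore bounded below by the Kantorovich minimum $\int_{\overline{\Omega}}d_k(x,T(x))\,\mathrm{d}f^+(x)$. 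This shows that $T$ is a Monge optimizer, and the above uniqueness gives $S=T$ for $f^+$-a.e.\ $x$.

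The main obstacle is the countability argument for the branching set $B$. In the Euclidean case one exploits straightness of the rays and the Euclidean ordering of directions; here one must verify that, after transferring to Euclidean geometry via the exponential map at each $x\in B$, the geodesic wedges inherit enough of the Euclidean angular structure for the separating-direction argument to still yield an injection into a countable set. The $C^{1,1}$ assumption on $k$ makes the exponential map locally a $C^1$ diffeomorphism, which, combined with the non-crossing of transport rays, should reduce the situation on small balls to the flat case.
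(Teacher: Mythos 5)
Your proposal is correct and follows essentially the same route as the paper: both rest on the non-crossing of transport rays, the countability of the set of branch points (obtained from pairwise disjoint wedge regions of positive measure bounded by two rays emanating from the same boundary point), atomlessness of $f^+$, and strict geodesic convexity to identify $T(x)$ as the unique point where the ray through $x$ meets $\mathrm{spt}(f^-)$. The only cosmetic difference is at the end: you conclude uniqueness directly because $T$ is determined by the ray structure of a fixed Kantorovich potential, whereas the paper invokes the standard midpoint argument with $\Lambda''=(\Lambda+\Lambda')/2$; both work.
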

\begin{proof}
Let $\Lambda$ be an optimal transport plan between $f^+$ and $f^-$. Let us denote by $D$ the set of points whose belong to different transport rays. Since two transport rays cannot intersect at an interior point of either of them, we have that $D \subset \partial\Omega$. Fix $x \in D$ and let $\gamma^\pm_x$ be
two different transport rays starting at
$x$. Let $E_x$ be the region delimited by $\gamma_x^+$, $\gamma_x^-$ and $\partial\Omega$.
Then, we easily see that these sets $\{E_x\}_{x \in D}$ must be essentially disjoint with $\mathcal{L}^2(E_x) >0$, for every $x \in D$. This implies that the set $D$ is at most countable. Yet, $f^+$ is non-atomic and so, $f^+(D)=0$. On the other hand, we have that for $f^+-$almost every
$x \,\notin D$,
there is a unique transport ray $\gamma_x$ starting at $x$ and, thanks to the fact that $\Omega$ is  geodesically strictly convex, this geodesic $\gamma_x$ intersects \,$\mbox{spt}(f^-)$
at exactly one point $T(x)$. This implies that $\Lambda=(Id,T)_{\#}f^+$. Yet, this is sufficient to infer that $\Lambda$ is the unique optimal transport plan in the Kantorovich problem \eqref{Kantorovich problem} since, if $\Lambda^\prime=(Id,T^\prime)_{\#}f^+$ is another optimal transport plan then \,$\Lambda^{\prime\prime}=(\Lambda + \Lambda^\prime)/2$\, is also optimal and so, $\Lambda^{\prime\prime}$ must be induced by a transport map $T^{\prime\prime}$. But, this yields to a contradiction as soon as $T\neq T^\prime$, since it is not possible to have the following equality for all $\varphi \in C(\overline{\Omega} \times \overline{\Omega})$:
$$\int_{\overline{\Omega}}\varphi(x,T^{\prime\prime}(x))\,\mathrm{d}f^+(x)=\frac{1}{2}\bigg[\int_{\overline{\Omega}}\varphi(x,T(x))\,\mathrm{d}f^+(x)+\int_{\overline{\Omega}}\varphi(x,T^{\prime}(x))\,\mathrm{d}f^+(x)\bigg]. $$
\end{proof}


Fix $\tau \in [0,1]$. Then, we define the partial transport densities $\sigma_\Lambda^+$ and $\sigma_\Lambda^-$ as follows:
\begin{equation} \label{partial transport density definition 1}
<\sigma_\Lambda^+,\phi>:=\int_{\overline{\Omega} \times \overline{\Omega}} \int_{0}^{\tau} \phi(\gamma_{x,y}(t))\,k(\gamma_{x,y}(t))\, |\gamma^\prime_{x,y}(t)| \,\mathrm{d}t\,\mathrm{d}\Lambda (x,y),\,\,\,\,\mbox{for all}\,\,\,\,\phi \in C(\overline{\Omega}),
\end{equation}
and 
\begin{equation} \label{partial transport density definition 2}
<\sigma_\Lambda^-,\phi>:=\int_{\overline{\Omega} \times \overline{\Omega}} \int_{\tau}^1 \phi(\gamma_{x,y}(t))\,k(\gamma_{x,y}(t))\, |\gamma^\prime_{x,y}(t)| \,\mathrm{d}t\,\mathrm{d}\Lambda (x,y),\,\,\,\,\mbox{for all}\,\,\,\,\phi \in C(\overline{\Omega}). 
\end{equation}
It is clear that $\sigma_\Lambda=\sigma_\Lambda^+ + \sigma_\Lambda^-$. To prove $L^p$ estimates on the transport density $\sigma_\Lambda$, the idea in the next sections will be to prove $L^p$ summability on $\sigma_\Lambda^\pm$. \\

In what follows, we will also need the following stability result.
\begin{proposition} \label{stability}
Assume that $f^\pm \in \mathcal{M}^+(\partial\Omega)$ and let $(f_n^-)_n \subset \mathcal{M}^+(\partial\Omega)$ be such that $f_n^- \rightharpoonup f^-$. Let $\Lambda_n$ be an optimal transport plan between $f^+$ and $f_n^-$. Then, $\Lambda_n \rightharpoonup \Lambda$, where $\Lambda$ is an optimal transport plan between $f^+$ and $f^-$. Moreover, $\sigma_{\Lambda_n}^\pm \rightharpoonup \sigma_\Lambda^\pm$. Finally, if $\Lambda_n:=(I,T_n)_{\#}f^+$ and $\Lambda:=(I,T)_{\#}f^+$, then (up to taking a subsequence) for $f^+$-a.e. $x \in \partial\Omega$, we have $T_n(x) \rightarrow T(x)$.
\end{proposition}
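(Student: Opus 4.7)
The plan is to extract a weakly convergent subsequence of $\{\Lambda_n\}$, identify its limit as an optimal plan between $f^+$ and $f^-$, and then transfer this convergence to the partial transport densities and, when applicable, to the transport maps. First, since $\overline{\Omega}\times\overline{\Omega}$ is compact and $\Lambda_n(\overline{\Omega}\times\overline{\Omega})=f^+(\overline{\Omega})$, the family $\{\Lambda_n\}$ is tight, so one can extract a subsequence (not relabelled) with $\Lambda_n\rightharpoonup\Lambda$. Pushing forward by the continuous projections and using $f_n^-\rightharpoonup f^-$ yields that $\Lambda$ has marginals $f^+$ and $f^-$. Continuity of $d_k$ on $\overline{\Omega}\times\overline{\Omega}$ (from smoothness of $k$ and geodesic convexity of $\Omega$) gives $\int d_k\,d\Lambda=\lim_n\int d_k\,d\Lambda_n$; testing a Kantorovich potential $\psi$ for the limit problem against the duality $\min_n=\sup\{\int\psi\,d(f^+-f_n^-):\,|\psi(x)-\psi(y)|\le d_k(x,y)\}$ then shows that $\lim_n\min_n=\min$, so $\Lambda$ is optimal.

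To obtain $\sigma_{\Lambda_n}^\pm\rightharpoonup\sigma_\Lambda^\pm$, fix $\phi\in C(\overline{\Omega})$ and $\tau\in[0,1]$ and consider
\[
F(x,y):=\int_0^\tau \phi(\gamma_{x,y}(t))\,k(\gamma_{x,y}(t))\,|\gamma'_{x,y}(t)|\,dt.
\]
Uniqueness of the minimizing geodesic joining any two points of $\overline{\Omega}$, together with the $C^{1,1}$ regularity of $k$, implies continuous dependence of $\gamma_{x,y}$ on its endpoints in the $C^1$ topology (the standard ODE-stability property of the geodesic equation; cf.\ \cite[Chapter 8]{CanSin}), hence continuity of $F$ on $\overline{\Omega}\times\overline{\Omega}$. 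The weak convergence $\Lambda_n\rightharpoonup\Lambda$ then yields $\langle\sigma_{\Lambda_n}^+,\phi\rangle\to\langle\sigma_\Lambda^+,\phi\rangle$; the argument for $\sigma^-$ is identical.

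Finally, suppose $\Lambda_n=(I,T_n)_\#f^+$ and $\Lambda=(I,T)_\#f^+$. Such a representation forces $f^+$ to be non-atomic, so Proposition \ref{prop:transportplan} gives uniqueness of the limit plan and the whole sequence $\Lambda_n\rightharpoonup\Lambda$. For $f^+$-a.e.\ convergence of $T_n$ to $T$, apply Lusin's theorem: for $\delta>0$, there is a compact set $K\subset\overline{\Omega}$ with $f^+(\overline{\Omega}\setminus K)<\delta$ on which $T$ is continuous, and $T|_K$ extends by Tietze to $\tilde T\in C(\overline{\Omega})$. The continuous function $(x,y)\mapsto d_k(y,\tilde T(x))$ can be tested against the plans to give
\[
\int d_k(T_n(x),\tilde T(x))\,df^+(x)\;\longrightarrow\;\int d_k(T(x),\tilde T(x))\,df^+(x)\le C\delta,
\]
which, combined with Markov's inequality and $T=\tilde T$ on $K$, yields $T_n\to T$ in $f^+$-measure on $K$; diagonalization over $\delta\to 0$ then produces a subsequence converging $f^+$-a.e. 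The main obstacle in the argument above is the continuous dependence of the geodesic $\gamma_{x,y}$ on its endpoints used in the second paragraph, which rests on the $C^{1,1}$ regularity of the metric and the uniqueness of minimizing geodesics between points of $\overline{\Omega}$ granted by geodesic convexity.
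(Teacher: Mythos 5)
Your identification of the weak limit $\Lambda$ as an \emph{optimal} plan has the key inequality going the wrong way. Plugging the Kantorovich potential $\psi$ of the \emph{limit} problem into the dual of the $n$-th problem gives $\min_n \ge \int_{\overline\Omega}\psi\,\mathrm{d}(f^+-f_n^-)\to\int_{\overline\Omega}\psi\,\mathrm{d}(f^+-f^-)=\min$, i.e. $\liminf_n \min_n\ge\min$. Combined with $\int d_k\,\mathrm{d}\Lambda=\lim_n\min_n$, this only yields $\int d_k\,\mathrm{d}\Lambda\ge\min$, which holds trivially for any admissible plan. What is actually needed is $\limsup_n\min_n\le\min$, and your test function cannot produce it. The paper closes this by taking Kantorovich potentials $\psi_n$ of the \emph{approximating} problems, for which $\min_n=\int\psi_n\,\mathrm{d}(f^+-f_n^-)$ holds with equality, extracting a uniformly convergent subsequence $\psi_n\to\psi$ (Arzel\`a--Ascoli after normalization, since all $\psi_n$ are $1$-Lipschitz for $d_k$), and passing to the limit to get $\int d_k\,\mathrm{d}\Lambda=\int\psi\,\mathrm{d}(f^+-f^-)\le\min$, since $\psi$ remains admissible. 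Alternatively, you could argue that the minimal cost is the Wasserstein-type distance associated with $d_k$ and use the triangle inequality together with the fact that weak-$*$ convergence of measures of equal mass on the compact set $\overline\Omega$ implies convergence in this distance, so the second marginal perturbation costs $o(1)$. Either way, the missing half of the argument must be supplied.

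The remaining two parts are correct. For $\sigma_{\Lambda_n}^\pm\rightharpoonup\sigma_\Lambda^\pm$ you make explicit the continuity of $(x,y)\mapsto\int_0^\tau\phi(\gamma_{x,y}(t))\,k(\gamma_{x,y}(t))\,|\gamma'_{x,y}(t)|\,\mathrm{d}t$, which the paper leaves implicit; this is the right justification (and the diagonal $x=y$ causes no trouble since the integral is bounded by $\|\phi\|_\infty\,d_k(x,y)$). For the $f^+$-a.e.\ convergence of $T_n$ you take a genuinely different route: Lusin--Tietze approximation of $T$ and convergence in $f^+$-measure, whereas the paper tests $\xi(x,y)=\xi^+(x)\cdot y$ and $\xi(x,y)=|y|^2$ to get $T_n\rightharpoonup T$ weakly in $L^2(f^+)$ together with convergence of norms, hence strong $L^2(f^+)$ convergence and a.e.\ convergence on a subsequence. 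Both arguments work; the paper's is shorter, yours avoids the Hilbert-space trick. One minor inaccuracy: a plan of the form $(I,T)_\# f^+$ does not force $f^+$ to be non-atomic (a Dirac mass transported by a map is a counterexample), so your claim that the whole sequence converges is not justified by that remark; but since the conclusion is stated only up to a subsequence, this does not affect the result.
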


\begin{proof}
Let $\psi_n$ be a Kantorovich potential between $f^+$ and $f_n^-$. Then, from the duality $\min\eqref{Kantorovich problem}=\sup\eqref{eq:dual}$, we have 
$$\int_{\overline{\Omega} \times \overline{\Omega}} d_k(x,y)\,\mathrm{d}\Lambda_n(x,y)=\int_{\overline{\Omega}} \psi_n\,\mathrm{d}(f^+ - f_n^-).$$ 
Yet, it is clear that, up to a subsequence, $\psi_n \rightarrow \psi$ uniformly. So, passing to the limit when $n \to \infty$, we get 
$$\int_{\overline{\Omega} \times \overline{\Omega}} d_k(x,y)\,\mathrm{d}\Lambda(x,y)=\int_{\overline{\Omega}} \psi\,\mathrm{d}(f^+ - f^-).$$\\
This implies that $\Lambda$ is an optimal transport plan between $f^+$ and $f^-$, while $\psi$ is a Kantorovich potential between them. The second statement follows directly from the definitions \eqref{partial transport density definition 1} \& \eqref{partial transport density definition 2} of $\sigma_\Lambda^\pm$. The last statement follows directly from the fact that
$$\int_{\overline{\Omega}} \xi(x, T_n(x))\,\mathrm{d}f^+(x) \rightarrow \int_{\overline{\Omega}} \xi(x,T(x))\,\mathrm{d}f^+(x),\,\,\,\mbox{for all}\,\,\xi \in C(\overline{\Omega} \times \overline{\Omega}).$$
Take $\xi(x,y)=\xi^+(x) \cdot y$\, and \,$\xi(x,y)=|y|^2$ respectively, we infer that $T_n \rightharpoonup T$ in $L^2$ and $||T_n||_{L^2} \rightarrow ||T||_{L^2}$ and so,  $T_n \rightarrow T$ in $L^2$. In particular, it converges $f^+$-a.e. on a subsequence.
\end{proof}

{ We will conclude this section by commenting on the relationship between geodesic convexity of the domain and the barrier condition (see Definition \ref{Barrier Condition definition}), which is the assumption used for existence of solutions in the least gradient problem in \cite{JMN}. It turns out that the assumption of geodesic strict convexity of $\Omega$, required to use optimal transport techniques, implies the barrier condition.
 
\begin{proposition}
 Suppose that $\Omega$ is geodesically strictly convex. Then, it satisfies the barrier condition.
\end{proposition}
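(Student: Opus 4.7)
The strategy is a proof by contradiction, using the structure of minimizers and the quantitative content of strict geodesic convexity. First I would choose $r_0 > 0$ small enough so that $\partial\Omega \cap B(x_0, 2r_0)$ is a smooth graph and so that, for any two points sufficiently close to $x_0$ on $\partial\Omega$, the unique $k$-geodesic between them stays near their chord (this uses the $C^{1,1}$ regularity of $k$). For $r < r_0$, I would rely on the standard regularity theory for minimizers of the uniformly elliptic anisotropic perimeter functional in two dimensions: any minimizer $V$ has $U := \Omega \setminus V$ with free boundary $\partial U \cap \Omega$ consisting of finitely many smooth $k$-geodesic arcs, each of which meets $\partial\Omega$ at its endpoints, and $\partial U \cap \partial\Omega$ is a finite union of arcs.

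Suppose for contradiction there is $p \in \partial V \cap \partial\Omega \cap B(x_0, r)$. I would split into two cases depending on the local structure at $p$. In the first case, $p$ lies in the interior of an arc of $\partial V \cap \partial\Omega$, so that a small neighbourhood of $p$ on $\partial\Omega$ belongs to $\partial V$ and hence, by the regularity above, $U \cap B(p, \delta) = \emptyset$ for some $\delta > 0$. Choose $\delta$ small enough that $B(p, 3\delta) \subset B(x_0, r)$, let $q_1, q_2$ be the two points of $\partial\Omega$ at $\partial\Omega$-distance $\delta$ from $p$, and let $U_p^\delta$ denote the region enclosed between the $\partial\Omega$-arc $A_p^\delta$ joining them through $p$ and the $k$-geodesic $\gamma_{q_1,q_2}$. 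By strict geodesic convexity $\gamma_{q_1,q_2}$ lies in the interior of $\Omega$, and by the choice of $\delta$ also $U_p^\delta \subset B(x_0, r)$; the competitor $V' := V \setminus U_p^\delta$ is admissible and satisfies
\[ P_\phi(V', \mathbb{R}^2) - P_\phi(V, \mathbb{R}^2) = d_k(q_1, q_2) - |A_p^\delta|_\phi < 0, \]
the strict inequality coming from strict geodesic convexity (the arc $A_p^\delta$ is a non-geodesic path between $q_1$ and $q_2$), contradicting minimality.

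In the second case, $p$ is the endpoint of some free-boundary $k$-geodesic arc $\Gamma_i$ of $\partial V$, with the other endpoint $p'$ on $\partial\Omega$ and an adjacent arc $A_i$ of $\partial\Omega$ lying in $\partial U$. Here I would use a first-variation argument: as $p$ is moved by $\epsilon$ along $\partial\Omega$ in the direction away from $p'$, the weighted arc length $|A_i|_\phi$ increases at Euclidean rate $k(p)$, while the geodesic distance $d_k(p, p')$ increases at rate $k(p)\,a$, where $a$ is the Euclidean cosine of the angle between $\partial\Omega$ and $\Gamma_i$ at $p$. Strict geodesic convexity implies that $\Gamma_i$ enters $\Omega$ transversally at $p$ (otherwise the geodesic from $p$ to $p'$ would be tangent to $\partial\Omega$ there, contradicting that the unique geodesic between $p$ and $p'$ lies in the interior), so $a < 1$ strictly and the quantity $d_k(p, p') - |A_i|_\phi$ decreases strictly along the perturbation. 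Since $p \in B(x_0, r)$ is in the open ball, a sufficiently small such perturbation keeps the new configuration admissible, yielding again a strictly better competitor and contradicting minimality.

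The main obstacle is the regularity step that supports the case division: one must know that the free boundary of a minimizer consists of smooth arcs meeting $\partial\Omega$ only at isolated endpoints, so that $\partial V \cap \partial\Omega$ cannot have complicated structure. This is exactly the regularity theory for minimizers of the uniformly elliptic anisotropic perimeter in two dimensions applied to $\phi(x, p) = k(x)|p|$; once it is available, the quantitative content of strict geodesic convexity (namely, $d_k(q_1, q_2) < $ length of any non-geodesic path between $q_1$ and $q_2$, and transversal incidence of interior geodesics at $\partial\Omega$) closes both cases.
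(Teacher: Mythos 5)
Your overall strategy (contradiction via competitors exploiting strict geodesic convexity) is sound in spirit, but it rests on a load-bearing claim that you flag yourself and that is not actually available off the shelf: that a minimizer $V$ of \eqref{eq:barriercondition} has $\partial U\cap\Omega$ equal to \emph{finitely many} geodesic arcs, each meeting $\partial\Omega$ only at its endpoints, with $\partial U\cap\partial\Omega$ a finite union of arcs. Standard regularity theory for uniformly elliptic anisotropic perimeter minimizers gives interior $C^{1,\alpha}$ regularity of the reduced boundary (hence that each free-boundary component is locally a weighted geodesic), but it says nothing about the finiteness of the decomposition or about the behaviour of the free boundary where it meets the Lipschitz boundary $\partial\Omega$ --- and these are exactly the facts your argument needs: without them your two cases are not exhaustive ($p$ could be an accumulation point of infinitely many free-boundary arcs, or a limit of free-boundary arcs approaching $\partial\Omega$ without touching it, in which case $U\cap B(p,\delta)=\emptyset$ fails in Case 1 and the clean perimeter bookkeeping breaks down). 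There is also a secondary gap in Case 2: moving $p$ away from $p'$ \emph{enlarges} $U$ by a sliver between the old and the new geodesic from $p(\epsilon)$ to $p'$, and this sliver need not be contained in $B(x_0,r)$ if $\Gamma_i$ touches $\partial B(x_0,r)$, so the competitor may violate the constraint $(\Omega\setminus W)\subset B(x_0,r)$; this is fixable by rerouting only a short initial segment of $\Gamma_i$ near $p$, but as written the perturbation is not admissible. (Your transversality claim also deserves a word of care --- tangency of a curve to $\partial\Omega$ from the inside is a priori compatible with the curve lying in the interior --- though the correct justification is the one the paper uses elsewhere, via uniqueness of the geodesic with prescribed initial velocity.)

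The paper's proof sidesteps all of this regularity machinery with a single global competitor: choose $r$ so that $\partial\Omega\cap\partial B(x_0,r)$ consists of two points $x_1,x_2$, let $\gamma_{x_1x_2}$ be the geodesic between them (interior to $\Omega$ by strict geodesic convexity), and observe that the region $V'$ cut off by $\gamma_{x_1x_2}$ on the far side of $x_0$ minimizes $P_\phi$ in a class \emph{larger} than the admissible class of \eqref{eq:barriercondition}, because its boundary inside $\Omega$ is a length-minimizing curve joining $x_1$ to $x_2$. Comparing any minimizer $V$ with $V'\cup(\Omega\setminus B(x_0,r))$ then identifies $\partial V$ inside $B(x_0,r)$ with pieces of $\gamma_{x_1x_2}$ and of $\partial B(x_0,r)$, which by construction avoid $\partial\Omega\cap B(x_0,r)$. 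I would encourage you to adopt this direct identification of the minimizer rather than attempt to classify the local structure of an arbitrary minimizer at the boundary.
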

\begin{proof}
Let $x_0 \in \partial\Omega$. Because $\Omega$ has Lipschitz boundary, we may choose $r > 0$ small enough so that $\partial\Omega \cap \partial B(x_0,r)$ consists of two points; we denote them by $x_1$ and $x_2$. Denote by $\gamma_{x_1 x_2}$ the geodesic from $x_1$ to $x_2$; because $\Omega$ is geodesically strictly convex, we have $\gamma_{x_1 x_2} \subset \Omega$.

Now, we consider the problem \eqref{eq:barriercondition} in the definition of the barrier condition. Using the direct method of the calculus of variations, we easily see that it admits a solution $V$. The points $x_1, x_2$ separate $\partial\Omega$ into two arcs: $\Gamma_1$ is the arc which contains $x_0$ and $\Gamma_2$ is the arc which does not contain $x_0$. On the other hand, notice that $\gamma_{x_1 x_2}$ separates $\Omega$ into two regions; let us denote by $V'$ the region whose closure contains $\Gamma_2$. Hence, we may take $V'' = V' \cup (\Omega \backslash B(x_0,r))$ as a competitor in \eqref{eq:barriercondition} and see that $V = V''$; this happens because $V'$ minimizes perimeter in a larger class, namely it is a solution of
\begin{equation}\label{eq:similarproblem}
\inf \bigg\{ P_\phi(W, \mathbb{R}^2): \quad W \subset \Omega, \quad T\chi_W \geq \chi_{\Gamma_2} \bigg\},
\end{equation}
so as $V$ is admissible in \eqref{eq:barriercondition}, then it is also admissible in \eqref{eq:similarproblem}; in particular, $P_\phi(V', \mathbb{R}^2) \leq P_\phi(V, \mathbb{R}^2)$. This inequality still holds after taking a union with $\Omega \backslash B(x_0, r)$, see for instance \cite{Mag} (or simply notice that $\partial V''$ is a Lipschitz curve which arises from replacing a part of $\partial V'$ in case when it leaves $B(x_0,r)$ with a part of $\partial B(x_0,r)$). In particular, $\partial V$ consists of parts of $\gamma_{x_1,x_2}$ and parts of $\partial B(x_0,r)$, so $\partial V \cap \partial\Omega \cap B(x_0,r) = \emptyset$ and the barrier condition is satisfied.
\end{proof}

On the other hand, the barrier condition is only a local property near $\partial\Omega$ and does not imply geodesic convexity of $\Omega$, because the geodesic between any two given points may fail to be unique. This can be seen by considering the examples in \cite{LMSS} (in this regard, they may be modified for a smooth weight). However, the barrier condition means that the boundary is not locally area-minimizing with respect to internal variations, so once we know that the geodesic between any two given points in $\overline{\Omega}$ is unique, it means that the domain is strictly geodesically convex.
}

{

\section{Structure of solutions}\label{sec:structure}



In this section, we extend the relationship between the weighted least gradient problem \eqref{eq:weightedleastgradientproblem} and the Kantorovich problem with Riemannian cost \eqref{Kantorovich problem} to their respective dual problems. Namely, we study the relationship between the maximization problem (see \cite{Mor})
\begin{equation}\label{eq:dualtolgp}
\sup \bigg\{ \int_{\partial\Omega} [\mathbf{z},n] \, g \, \mathrm{d}\mathcal{H}^{1}: \mathbf{z} \in \mathcal{Z} \bigg\},
\end{equation}
where $g \in BV(\partial\Omega)$ and
\begin{equation*}
\mathcal{Z} = \bigg\{ \mathbf{z} \in L^\infty(\Omega,\mathbb{R}^2), \quad \mathrm{div} \, \mathbf{z} = 0, \quad |\mathbf{z}(x)| \leq k(x) \mbox{  for a.e. } x \in \Omega \bigg\},
\end{equation*}
with the maximization problem \eqref{eq:dual}
\begin{equation*}
\sup\bigg\{\int_{\overline{\Omega}} \psi \,\mathrm{d}(f^+ - f^-)\,:\,|\psi(x) - \psi(y)| \leq d_k(x,y),\,\,\forall\,\,x,\,y \in \overline{\Omega} \bigg\}.
\end{equation*}
Here, the normal trace $[\mathbf{z},n]$ is understood in the weak sense (see Section \ref{sec:anisotropicbv}). Using a standard reasoning in duality theory, one can see that both problems admit solutions; see \cite{Mor} for problem \eqref{eq:dualtolgp} and \cite{San2015,Vil} for problem \eqref{eq:dual}. Since the infimal values in the primal problems are equal, the supremal values in the dual problems also coincide. We now show that these problems are equivalent in the sense that from a solution of one problem we may recover a solution of the other problem. In the Euclidean case, such a result was shown in \cite{Gor2021Appl}. We use a similar technique in the weighted case; the main differences are that instead of uniform bounds on $\mathbf{z}$ we now have pointwise bounds and the domain is no longer convex, so we need to rely on Proposition \ref{prop:L1recovery} (valid for any contractible domain) to recover the Kantorovich potential.

\begin{theorem}\label{thm:dualproblems}
 Suppose that $\Omega$ is geodesically convex. Then, the problems \eqref{eq:dual} and \eqref{eq:dualtolgp} are equivalent in the following sense: \\
(1) Their supremal values coincide, i.e. $\sup \eqref{eq:dual} = \sup \eqref{eq:dualtolgp}$. \\
(2) Given a maximizer $\psi \in \mathrm{Lip}(\overline{\Omega})$ of \eqref{eq:dual}, we can construct a maximizer $\mathbf{z} \in L^\infty(\Omega, \mathbb{R}^2)$ of \eqref{eq:dualtolgp}; moveover, $\mathbf{z} = R_{-\frac{\pi}{2}} \nabla \psi$ in $\Omega$. \\
(3) Given a maximizer $\mathbf{z} \in L^\infty(\Omega,\mathbb{R}^2)$ of \eqref{eq:dualtolgp}, we may construct a maximizer $\psi \in \mathrm{Lip}(\overline{\Omega})$ of \eqref{eq:dual}and, we have $\mathbf{z} = R_{-\frac{\pi}{2}} \nabla \psi$ in $\Omega$.
\end{theorem}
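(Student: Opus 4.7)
The plan is to exploit the two-dimensional duality between divergence-free vector fields and gradients, implemented by rotation by $\pm\pi/2$, in combination with the primal equivalences already established in the paper. For part (1), I would chain
\begin{equation*}
\sup\eqref{eq:dualtolgp} \;=\; \inf\eqref{eq:weightedleastgradientproblem} \;=\; \min\eqref{eq:weightedbeckmannproblem} \;=\; \min\eqref{Kantorovich problem} \;=\; \sup\eqref{eq:dual},
\end{equation*}
where the middle two equalities come respectively from Theorem \ref{thm:anisotropicequivalence} applied to $\phi(x,p)=k(x)|p|$ (for which $\phi^\perp=\phi$) and from the results of Section \ref{sec:beckmongeequivalence}; the rightmost equality is classical Monge--Kantorovich duality, and the leftmost is the standard convex-analytic duality for the least gradient problem (see \cite{Mor}, and \cite{Gor2021Appl} in the Euclidean case). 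This reduces what remains to verifying that the correspondence $\mathbf{z}=R_{-\pi/2}\nabla\psi$ preserves the objective value in both directions.

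For part (2), given a Kantorovich potential $\psi$, set $\mathbf{z}:=R_{-\pi/2}\nabla\psi$. Admissibility is immediate: the $d_k$-Lipschitz condition is equivalent to $|\nabla\psi|\leq k$ a.e., so $|\mathbf{z}|\leq k$, and $\mathrm{div}(\mathbf{z})=0$ distributionally because in two dimensions the divergence of a rotated gradient vanishes. To match the values, I would integrate by parts on the closed curve $\partial\Omega$: the algebraic identity $[\mathbf{z},n]=\partial_\tau\psi$ (which encodes that rotation by $-\pi/2$ sends the tangential direction to the normal direction, consistent with the sign convention used in Proposition \ref{prop:L1recovery}) combined with $\partial_\tau g = f$ yields
\begin{equation*}
\int_{\partial\Omega}[\mathbf{z},n]\,g\,d\mathcal{H}^1 \;=\; -\int_{\partial\Omega}\psi\,d(\partial_\tau g) \;=\; \int_{\overline{\Omega}}\psi\,d(f^+-f^-),
\end{equation*}
which coincides with $\sup\eqref{eq:dual}$, forcing optimality of $\mathbf{z}$. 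For part (3), starting from an optimal $\mathbf{z}$, I would apply Proposition \ref{prop:L1recovery} to produce $u\in W^{1,1}(\Omega)$ with $\mathbf{z}=R_{\pi/2}\nabla u$; this is legitimate because geodesic convexity of $\Omega$ implies contractibility, via homotopy along the unique geodesics to a fixed interior base point. Setting $\psi:=-u$ (equivalently, taking a primitive of the curl-free field $R_{\pi/2}\mathbf{z}$) gives $\mathbf{z}=R_{-\pi/2}\nabla\psi$, and since $\mathbf{z}\in L^\infty$ with $|\mathbf{z}|\leq k$, one has $\psi\in W^{1,\infty}(\Omega)=\mathrm{Lip}(\overline{\Omega})$ with $|\nabla\psi|\leq k$ a.e., hence $\psi$ is $1$-Lipschitz with respect to $d_k$ by integration along Lipschitz curves. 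The same boundary computation run in reverse identifies the values.

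The main technical obstacle is the identity $[\mathbf{z},n]=\partial_\tau\psi$ on $\partial\Omega$ in this low-regularity setting, where $\mathbf{z}$ is only $L^\infty$, $\psi$ only Lipschitz, and $g$ only $BV$. I would handle it by mollification along the lines of the proof of Proposition \ref{prop:L1recovery}: for the smooth approximations $\mathbf{z}^\varepsilon:=\mathbf{z}\ast\varphi_\varepsilon$ and their primitives $\psi^\varepsilon$, the pointwise identity $\mathbf{z}^\varepsilon\cdot n = \partial_\tau\psi^\varepsilon$ on $\partial\Omega$ is an elementary consequence of the rotation, and one passes to the limit against the BV datum $g$ using the weak-normal-trace formalism of Proposition \ref{prop:boundonAnzellottipairing}. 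All remaining ingredients---the recovery of a Lipschitz primitive from a divergence-free $L^\infty$ field on a contractible planar domain, and the two strong-duality statements invoked in part (1)---are available from earlier results in the paper or from the cited literature, so no new technical difficulties arise there.
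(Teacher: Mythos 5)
Your proposal follows essentially the same route as the paper's proof: part (1) by chaining the primal equivalences, part (2) by rotating the Kantorovich potential and integrating by parts along the closed boundary curve, and part (3) by invoking Proposition \ref{prop:L1recovery} (whose contractibility hypothesis you rightly observe is implied by geodesic convexity) and setting $\psi=-u$. The only flaw is a sign slip in part (2): with the paper's conventions (Proposition \ref{prop:L1recovery} gives $v\cdot n=\partial_\tau(Tu)$ for $v=R_{\frac{\pi}{2}}\nabla u$, and $f=\partial_\tau g$), one has $[\mathbf{z},n]=-\partial_\tau\psi$ for $\mathbf{z}=R_{-\frac{\pi}{2}}\nabla\psi$, so the boundary computation should read $\int_{\partial\Omega}[\mathbf{z},n]\,g\,\mathrm{d}\mathcal{H}^1=+\int_{\partial\Omega}\psi\,\mathrm{d}(\partial_\tau g)=\int_{\overline{\Omega}}\psi\,\mathrm{d}(f^+-f^-)$; as written, your two displayed equalities are mutually inconsistent given $f=\partial_\tau g$, although the final value (and hence the conclusion) is the right one.
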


Notice that the direction of the rotation is opposite to the direction of rotation in Theorem \ref{thm:lgpbeckmannequivalence}. We need geodesic convexity of $\Omega$, because in the computation of supremal values we need to pass through the primal problems, and the equivalence for primal problems holds on geodesically convex domains.

\begin{proof}
(1) This follows immediately from the equivalence between the weighted least gradient problem, weighted Beckmann problem, and the Monge-Kantorovich problem. We have
\begin{equation*}
\sup \eqref{eq:dualtolgp} = \inf \eqref{eq:weightedleastgradientproblem} = \inf \eqref{Kantorovich problem} = \sup \eqref{eq:dual}.
\end{equation*}
(2) Suppose that $\psi \in \mathrm{Lip}(\overline{\Omega})$ is a maximizer in Problem \eqref{eq:dual}. Take $\mathbf{z} = R_{-\frac{\pi}{2}} \nabla \psi \in L^\infty(\Omega, \mathbb{R}^2)$. Then, $\mathbf{z}$ is admissible in \eqref{eq:dualtolgp}, since $\mathrm{div} (\mathbf{z}) = 0$ as distributions and $|\mathbf{z}(x)| = | \nabla \psi(x) | \leq k(x)$ a.e. in $\Omega$. Since $\psi$ is Lipschitz and $g \in BV(\partial\Omega)$, we have $\psi g \in BV(\partial\Omega)$ and a mass balance condition holds:
\begin{equation}\label{eq:lipbvmassbalance}
0 = \int_{\partial\Omega} \mathrm{d} [\partial_\tau(\psi g)] = \int_{\partial\Omega} \psi \, \mathrm{d}(\partial_\tau g) + \int_{\partial\Omega} (\partial_\tau \psi) \, g \, \mathrm{d}\mathcal{H}^1.
\end{equation}
Now, $\mathbf{z} = R_{-\frac{\pi}{2}} \nabla \psi = - R_{\frac{\pi}{2}} \nabla \psi$, so by Proposition \ref{prop:L1recovery} we have $[\mathbf{z},n] = - \partial_\tau \psi$. By \eqref{eq:lipbvmassbalance}, 
\begin{equation*}
\sup \eqref{eq:dualtolgp} = \sup \eqref{eq:dual} = \int_{\partial\Omega} \psi \, \mathrm{d}f = \int_{\partial\Omega} \psi \, d(\partial_\tau g) =  \int_{\partial\Omega} (- \partial_\tau \psi) \, g \, \mathrm{d}\mathcal{H}^1 =
\int_{\partial\Omega} [\mathbf{z},n] \, g \, \mathrm{d}\mathcal{H}^1.
\end{equation*}
Hence, $\mathbf{z}$ is a maximizer for Problem \eqref{eq:dualtolgp}. \\

(3) Suppose that $\mathbf{z} \in L^\infty(\Omega, \mathbb{R}^2)$ is a maximizer in \eqref{eq:dualtolgp}. 
By Proposition \ref{prop:L1recovery} there exists $\overline{\psi} \in W^{1,1}(\Omega)$ such that $\mathbf{z} = R_{\frac{\pi}{2}} \nabla \overline{\psi}$. Since $\mathbf{z} \in L^\infty(\Omega,\mathbb{R}^2)$ with $| \mathbf{z}(x) | \leq k(x)$ a.e. in $\Omega$, we also have $\nabla \overline{\psi} \in L^\infty(\Omega,\mathbb{R}^2)$ with $| \nabla \overline{\psi}(x)| \leq k(x)$ a.e. in $\Omega$. 
Hence, $\overline{\psi}$ is $1$-Lipschitz with respect to the geodesic distance $d_k$, so it is admissible in Problem \eqref{eq:dual}. Moreover, Proposition \ref{prop:L1recovery} implies that $[\mathbf{z},n] = \partial_\tau \overline{\psi}$. Again using equation \eqref{eq:lipbvmassbalance}, we get
\begin{equation*}
\sup \eqref{eq:dual} = \sup \eqref{eq:dualtolgp} = \int_{\partial\Omega} [\mathbf{z},n] \, g \, \mathrm{d}\mathcal{H}^1 = \int_{\partial\Omega} (\partial_\tau \overline{\psi}) \, g \, \mathrm{d}\mathcal{H}^1 = - \int_{\partial\Omega} \overline{\psi} \, d(\partial_\tau g) = \int_{\partial\Omega} (-\overline{\psi}) \, \mathrm{d}f.
\end{equation*}
Hence, $\psi = -\overline{\psi}$ is a maximizer for Problem \eqref{eq:dual}. In particular, $\mathbf{z} = R_{\frac{\pi}{2}} \nabla \overline{\psi} = R_{-\frac{\pi}{2}} \nabla \psi$. 
\end{proof}

Hence, we may express the solution to the dual of the weighted least gradient problem \eqref{eq:dualtolgp} via a Kantorovich potential of the corresponding optimal transport problem and vice versa. In general, we cannot expect solutions to any of these problems to be unique. However, since the Kantorovich potentials are differentiable in the interiors of the transport rays and their gradient is uniquely defined, their structure is somewhat prescribed by the boundary data. We will now study some consequences of this result for the structure of solutions to the weighted least gradient problem. For the remainder of this section, for $u \in BV(\Omega)$, we denote $E_t := \{ u \geq t \}$.

\begin{corollary}
 Suppose that $\Omega$ is geodesically convex. Let $u$ be a solution to the weighted least gradient problem with boundary data $g \in BV(\partial\Omega)$. Then, for all $t \in \mathbb{R}$, every connected component of $\partial E_t$ is a transport ray.
\end{corollary}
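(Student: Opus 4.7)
The plan is to combine the Maz\'on characterisation (Theorem~\ref{thm:mrlcharacterisation}) and the equivalence of dual problems (Theorem~\ref{thm:dualproblems}) to attach a canonical Kantorovich potential $\psi$ to the solution $u$, and then to propagate the structure to every level set using the anisotropic Bombieri--De Giorgi--Giusti theorem together with the coarea formula. Concretely, I would first apply Theorem~\ref{thm:mrlcharacterisation} to $u$ to obtain a vector field $\mathbf{z}\in L^\infty(\Omega,\mathbb{R}^2)$ with $|\mathbf{z}(x)|\le k(x)$ a.e., $\mathrm{div}\,\mathbf{z}=0$, and $(\mathbf{z},Du)=k|Du|$ as measures. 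The Gauss--Green formula from Proposition~\ref{prop:boundonAnzellottipairing}, combined with $Tu=g$ and $\mathrm{div}\,\mathbf{z}=0$, then yields
$$\int_{\partial\Omega}[\mathbf{z},n]\,g\,\mathrm{d}\mathcal{H}^1=\int_\Omega k|Du|=\inf\eqref{eq:weightedleastgradientproblem}=\sup\eqref{eq:dualtolgp},$$
so $\mathbf{z}$ is a maximiser of \eqref{eq:dualtolgp}. Theorem~\ref{thm:dualproblems}(3) then provides a Kantorovich potential $\psi$ for \eqref{eq:dual} with $\mathbf{z}=R_{-\pi/2}\nabla\psi$ in $\Omega$.

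Next, by Theorem~\ref{thm:anisotropicbgg}, every $\chi_{E_t}$ is a function of $\phi$-least gradient. Since the pointwise bound $(\mathbf{z},D\chi_{E_t})\le k|D\chi_{E_t}|$ always holds and the coarea formula forces the integrals in $t$ to agree, for a.e.\ $t$ one obtains $(\mathbf{z},D\chi_{E_t})=k|D\chi_{E_t}|$ as measures on $\Omega$. Writing this in terms of densities with respect to $|D\chi_{E_t}|=\mathcal{H}^1\llcorner\partial^* E_t$ and combining with $|\mathbf{z}|\le k$, the equality case in Cauchy--Schwarz forces $\mathbf{z}=-k\nu_{E_t}$ on $\partial^* E_t$. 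Rotating, $\nabla\psi=R_{\pi/2}\mathbf{z}$ is tangent to $\partial E_t$ with $|\nabla\psi|=k$ along $\partial^*E_t$; in particular, the trace of $\nabla\psi$ on $\partial^*E_t$ is parallel to the tangent direction of the curve.

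Finally, fix a connected component $\Gamma$ of $\partial E_t$. The minimality of $E_t$ excludes closed loops in $\Omega$, because if $\Gamma$ bounded a region $V\subset\Omega$, then either removing or adding $V$ would strictly decrease the $\phi$-perimeter, contradicting that $\chi_{E_t}$ is $\phi$-least gradient. Hence $\Gamma$ has two endpoints $x_1,x_2\in\partial\Omega$. Since the tangent to $\Gamma$ coincides with the direction of $\nabla\psi$ and $|\nabla\psi|=k$ along it, integrating the tangential derivative of $\psi$ gives
$$|\psi(x_1)-\psi(x_2)|=\int_\Gamma |\nabla\psi|\,\mathrm{d}\mathcal{H}^1=\mathcal{H}^1_k(\Gamma).$$
On the other hand, since $\psi$ is $1$-Lipschitz for $d_k$ and $\Omega$ is geodesically convex, $|\psi(x_1)-\psi(x_2)|\le d_k(x_1,x_2)\le\mathcal{H}^1_k(\Gamma)$. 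All inequalities must therefore be equalities, so $\Gamma$ is the unique $d_k$-geodesic between $x_1$ and $x_2$ and $\psi(x_1)-\psi(x_2)=\pm d_k(x_1,x_2)$, i.e.\ $\Gamma$ is a transport ray.

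The main obstacles will be the rigorous passage from the equality of Anzellotti pairings to the pointwise identification $\mathbf{z}=-k\nu_{E_t}$ on $\partial^*E_t$, and the extension from a.e.\ $t$ to every $t$. The first point reduces to the fact that the density of $(\mathbf{z},D\chi_{E_t})$ with respect to $|D\chi_{E_t}|$ coincides $\mathcal{H}^1$-a.e.\ on $\partial^*E_t$ with the pairing of $\mathbf{z}$ against the measure-theoretic normal; the second I expect to handle by approximating a given $t$ by good values $t_n$ and exploiting local Hausdorff convergence of the level-set boundaries together with the closedness of the set of transport rays.
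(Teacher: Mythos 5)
Your proposal follows essentially the same route as the paper: the anisotropic Bombieri--De Giorgi--Giusti theorem plus the Maz\'on characterisation give a divergence-free field $\mathbf{z}$ with $(\mathbf{z},Du)=k|Du|$, the coarea formula and an approximation in $t$ upgrade this to $(\mathbf{z},D\chi_{E_t})=k|D\chi_{E_t}|$ for every level, the normal-trace identification gives $\mathbf{z}=k\,n^{E_t}$ $\mathcal{H}^1$-a.e.\ on $\partial E_t$, and Theorem \ref{thm:dualproblems} converts $\mathbf{z}$ into a Kantorovich potential $\psi$ whose gradient is tangent to $\partial E_t$ with $|\nabla\psi|=k$ there. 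The only difference is that you spell out the concluding step (excluding closed loops by minimality and integrating $\psi$ along a component $\Gamma$ to force $|\psi(x_1)-\psi(x_2)|=\mathcal{H}^1_k(\Gamma)=d_k(x_1,x_2)$), which the paper leaves implicit; this is correct and a genuine clarification rather than a departure.
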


This result is a precise statement of the informal observation that boundaries of superlevel sets of a solution to the least gradient problem are transport rays, which was made in the Euclidean case in \cite{DS}.


\begin{proof}
By Theorem \ref{thm:anisotropicbgg}, characteristic functions of superlevel sets $\chi_{E_t}$ are functions of least gradient, for all $t \in \mathbb{R}$. By Theorem \ref{thm:mrlcharacterisation}, there exists a vector field $\mathbf{z} \in L^\infty(\Omega, \mathbb{R}^2)$ with $|z| \leq k$ and zero divergence such that $(\mathbf{z},Du) = k |Du|$; in particular, $\mathbf{z}$ maximizes \eqref{eq:dualtolgp} since $u$ is a solution for Problem \eqref{eq:weightedleastgradientproblem}. Notice that by the co-area formula 
(see \cite[Proposition 2.7]{Anz} 
) the vector field $\mathbf{z}$ satisfies $(\mathbf{z},D\chi_{E_t}) = k |D\chi_{E_t}|$; technically, after the application of the co-area formula this result holds for almost all $t$, but since the vector field $\mathbf{z}$ is fixed we may approximate $\chi_{E_t}$ with $\chi_{E_{t_n}}$ in $L^1$ norm and obtain the result for all $t$. Now, since the distributional derivative of $\chi_{E_t}$ is concentrated on its jump set ($|D \chi_{E_t}|=\mathcal{H}^1_{|\partial E_t}$), then by \cite[Theorem 3.3]{CdC}, we have $[\mathbf{z},n^{E_t}] = k$\, $\mathcal{H}^{1}$-a.e. on $\partial E_t$; if we understand by $\mathbf{z}$ its precise representative, it means that $\mathbf{z} =k\cdot n^{E_t}$ $\mathcal{H}^{1}$-a.e. on $\partial E_t$. Hence, by Theorem \ref{thm:dualproblems}, there is a Kantorovich potential $\psi$ such that 
$\nabla \psi =k\, (n^{E_t})^\perp$ $\mathcal{H}^{1}$-a.e. on $\partial E_t$, so 
every connected component of $\partial E_t$ 
is in fact a transport ray.
\end{proof}

For the remainder of this section, let us introduce the following notation. We will work under the assumption that $\Omega$ is geodesically convex. For two points $x,\,y \in \overline{\Omega}$, denote by $\gamma_{x,y}$ the unique geodesic between these two points. Given $g \in BV(\partial\Omega)$, we denote by $D$ the set of discontinuity points of $g$, which is at most a countable set. We also denote by $\mathcal{D}$ the union of all geodesics between points in $D$. Since there is exactly one geodesic between two given points in $D$, the set $\mathcal{D}$ is a union of at most countably many geodesics and it has Hausdorff dimension one (possibly with infinite measure).

The following several lemmas are steps in the proof of Theorem \ref{thm:structure}, but are also of independent interest as results on the boundary behaviour and the jump set of a solution. Let us note that under the stronger assumption that $\Omega$ is geodesically strictly convex, it satisfies the barrier condition, and the first two results follow from the analysis in \cite{Gor2021IUMJ,Mor}. 


\begin{lemma}\label{lem:weakmaximumprinciple}
Suppose that $\Omega$ is geodesically convex. Let $u$ be a solution to the weighted least gradient problem with boundary data $g \in BV(\partial\Omega)$. Then, for all $t \in \mathbb{R}$ and every $x \in \partial\Omega$, there is at most one connected component $R_x$ of $\partial E_t$ such that $x \in R_x$.
\end{lemma}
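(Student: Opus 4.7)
The plan is to argue by contradiction: if two distinct components $R_1 \neq R_2$ of $\partial E_t$ both pass through $x \in \partial\Omega$, I will construct a strictly better competitor to $\chi_{E_t}$, which by the Bombieri--de Giorgi--Giusti theorem (Theorem~\ref{thm:anisotropicbgg}) is a function of $\phi$-least gradient for $\phi(x,p)=k(x)|p|$. By the preceding Corollary each $R_i$ is a transport ray, hence a geodesic arc with one endpoint at $x$; since the $C^{1,1}$ regularity of $k$ makes geodesics uniquely determined by a point and an initial velocity, the two rays must leave $x$ with different tangent vectors.

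For small $\delta>0$ I would choose $p_i \in R_i$ with $d_k(x,p_i)=\delta$ and let $\gamma_{p_1,p_2}$ be the unique geodesic between them, which lies in $\overline\Omega$ by geodesic convexity. The three arcs (the portions of $R_1$ and $R_2$ from $x$ to $p_i$ together with $\gamma_{p_1,p_2}$) bound an open curvilinear triangle $V\subset\overline\Omega$ that meets $\partial\Omega$ only at the single point $x$. The natural competitor is $\tilde E_t := E_t\triangle V$; since $V\cap\partial\Omega=\{x\}$ has $\mathcal{H}^1$-measure zero, the traces of $\chi_{\tilde E_t}$ and $\chi_{E_t}$ on $\partial\Omega$ coincide, so $\chi_{\tilde E_t}$ is admissible.

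The key computation is to compare the $\phi$-perimeters. Since $\chi_V$ is constant on $V^\circ$ and on $\Omega\setminus\overline V$, the reduced boundaries of $E_t$ and $\tilde E_t$ agree there. On the arcs of $R_1,R_2$ inside $\partial V$ both $\chi_{E_t}$ and $\chi_V$ have a jump, so $\chi_{\tilde E_t}$ is continuous across them and these arcs disappear from the reduced boundary; on $\gamma_{p_1,p_2}$, away from $\partial^*E_t$, only $\chi_V$ jumps, producing new reduced boundary. Collecting terms, I expect the bound
\begin{equation*}
\int_\Omega \phi(x, D\chi_{\tilde E_t}) - \int_\Omega \phi(x, D\chi_{E_t}) \;\leq\; d_k(p_1,p_2) - d_k(x,p_1) - d_k(x,p_2),
\end{equation*}
whose right-hand side is strictly negative by the strict Riemannian triangle inequality: equality would force $x$ to lie on $\gamma_{p_1,p_2}$, and then uniqueness of geodesics would identify $\gamma_{p_1,p_2}$ with the concatenation of the subarcs of $R_1$ and $R_2$ through $x$, which fails to be smooth there because $R_1$ and $R_2$ have distinct tangents at $x$. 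This contradicts the $\phi$-least gradient property of $\chi_{E_t}$.

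The main obstacle I anticipate is the perimeter bookkeeping on $\partial V$. In particular, one must handle the case where $\gamma_{p_1,p_2}$ meets other components of $\partial^*E_t$; at such points both $\chi_V$ and $\chi_{E_t}$ jump, so the corresponding contributions cancel and give only an additional non-positive correction to the displayed estimate without changing its sign. Components of $\partial^*E_t$ lying inside $V^\circ$ cause no trouble, since $\chi_V$ is constant there and their contributions to the reduced boundaries of $E_t$ and $\tilde E_t$ are identical; thus the whole argument ultimately reduces to the local geometry near $x$ and the strict triangle inequality.
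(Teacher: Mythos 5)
Your proof follows essentially the same route as the paper's: both construct a curvilinear geodesic triangle at $x$ bounded by arcs of $R_1$, $R_2$ and a connecting geodesic, modify $E_t$ by the symmetric difference with this triangle (which leaves the trace of $\chi_{E_t}$ unchanged), and derive a strict decrease of the $\phi$-perimeter from the failure of the concatenation of the two rays to be a minimizing geodesic, contradicting Theorem \ref{thm:anisotropicbgg}. Your bookkeeping for other components of $\partial^* E_t$ meeting $V$ (cancellation on $\gamma_{p_1,p_2}\cap\partial^*E_t$, invariance inside $V^\circ$) replaces the paper's device of choosing \emph{adjacent} components $R_1,R_2$, justified there by noting that infinitely many intermediate components would force $P_\phi(E_t,\Omega)=\infty$; both work.

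One step needs more care: ``$R_1$ and $R_2$ have distinct tangents at $x$'' does not by itself make the concatenation non-smooth. Uniqueness of geodesics with prescribed initial data only gives $v_1\neq v_2$ for the initial unit velocities; if $v_2=-v_1$, the tangent \emph{lines} coincide, the concatenated curve $p_1\to x\to p_2$ is $C^1$ at $x$, equality holds in the triangle inequality, $\gamma_{p_1,p_2}$ passes through $x$ and coincides with the union of the two subarcs, so $V$ degenerates and no competitor is produced. This configuration has to be excluded separately: since both rays lie in $\overline\Omega$ and emanate from the boundary point $x$, having $v_2=-v_1$ would force both initial velocities to be tangent to $\partial\Omega$ at $x$; this is exactly what the paper rules out by observing that the transport rays make a positive angle with $\partial\Omega$ (using the remark that geodesic convexity together with $k\in C^{1,1}(\overline\Omega)$ prevents interior angles larger than $\pi$). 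Adding that observation closes the gap; otherwise your argument is the paper's.
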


 The proof is similar to the one given in \cite[Proposition 3.5]{Gor2018CVPDE} in the Euclidean case, but we will present here its adaptation to the Riemannian case in order to highlight the main differences and difficulties. Before we proceed, let us note that since $k \in C^{1,1}(\overline{\Omega})$, geodesic convexity of $\Omega$ implies that its boundary cannot have corners with angle larger than $\pi$: 
  the geodesics between any two points in a 
 small neighbourhood on $\partial\Omega$ of a corner point $x_0$ are uniformly $C^{1,1}$ and contained in $\overline{\Omega}$.

\begin{proof}

Suppose that there are multiple connected components of $\partial E_t$ ending at $x$. Since all of them are transport rays, they cannot intersect in $\Omega$. Therefore, any connected component $R$ of $\partial E_t$ splits $\Omega$ into two open sets, and any other connected component $R'$ of $\partial E_t$ lies entirely in one of these sets. Let $R_1$ and $R_2$ be two adjacent connected components (if it is not possible to find adjacent connected components of $\partial E_t$, because between any given two there are infinitely many, then the perimeter of $E_t$ is infinite, which contradicts Theorem \ref{thm:anisotropicbgg}). Take any $x_1 \in R_1 \cap \Omega$ and $x_2 \in R_2 \cap \Omega$. Consider the curvilinear triangle $\Delta^x_{x_1, x_2}$ with sides $\gamma_{x, x_1}$, $\gamma_{x, x_2}$ and $\gamma_{x_1, x_2}$. It does not reduce to a one-dimensional structure: by geodesic convexity of $\Omega$ and uniqueness of geodesic from $x$ with given initial velocity, $R_1$ and $R_2$ make a positive angle with each other and with the boundary $\partial\Omega$ at $x$. Therefore, the angle between $R_1$ and $R_2$ is smaller than $\pi$, so the concatenation of $\gamma_{x, x_1}$ and $\gamma_{x, x_2}$ cannot be a minimizing curve from $x_1$ to $x_2$; otherwise, this contradicts the regularity of Riemannian geodesics. But then, we can modify the set $E_t$ by adding to it (or removing from it, depending on the position of $R_1$ and $R_2$ with respect to $E_t$) the curvilinear triangle $\Delta^x_{x_1, x_2}$, without changing the trace of $\chi_{E_t}$ and reducing the perimeter of $E_t$. This again contradicts Theorem \ref{thm:anisotropicbgg}, so there can be only one connected component $R_x$ of $\partial E_t$ such that $x \in R_x$.
\end{proof}

\begin{lemma}\label{lem:boundarybehaviour}
Suppose that $\Omega$ is geodesically convex. Let $u$ be a solution to the weighted least gradient problem with boundary data $g \in BV(\partial\Omega)$. Then, for all $t \in \mathbb{R}$, we have $\partial E_t \cap \partial\Omega \subset g^{-1}(t) \cup D$.
\end{lemma}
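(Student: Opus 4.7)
The plan is to argue by contradiction. Suppose $x \in \partial E_t \cap \partial\Omega$ satisfies $x \notin D$ and $g(x) \neq t$; by symmetry, assume $g(x) > t$. Continuity of $g$ at $x$ gives $\delta > 0$ with $g > t$ on $B(x,\delta) \cap \partial\Omega$. Since the trace of $\chi_{E_t}$ on $\partial\Omega$ agrees $\mathcal{H}^1$-a.e.\ with $\chi_{\{g \geq t\}}$ (as one sees by applying the coarea formula to $u$ and using $Tu = g$), we conclude that $T\chi_{E_t} = 1$ for $\mathcal{H}^1$-a.e.\ point of $B(x,\delta) \cap \partial\Omega$.

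Next, I exploit the structure of $\partial E_t$ near $x$. By Theorem \ref{thm:anisotropicbgg}, $\chi_{E_t}$ is itself a function of $\phi$-least gradient, so by the preceding corollary every connected component of $\partial E_t$ is a transport ray, i.e., a Riemannian geodesic joining two points of $\partial\Omega$. A singleton component $\{x\}$ is ruled out, since $\Omega \cap B(x,r) \setminus \{x\}$ is connected while $x \in \overline{E_t} \cap \overline{E_t^c}$. Therefore $x$ is contained in some nondegenerate transport ray $R_x \subset \partial E_t$; as the interior of a geodesic lies inside $\Omega$, the point $x$ must be an endpoint of $R_x$, and Lemma \ref{lem:weakmaximumprinciple} guarantees that $R_x$ is the only component of $\partial E_t$ meeting $x$.

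Now I derive the contradiction from the local picture near $x$. Because the metric $k$ is $C^{1,1}$ and $\Omega$ is geodesically convex, $R_x$ is a $C^{1,1}$ curve that cannot be tangent to $\partial\Omega$ at $x$: otherwise, by uniqueness of geodesics emanating from $x$ with a prescribed initial velocity, $R_x$ would immediately leave $\overline{\Omega}$. Consequently, for $r < \delta$ sufficiently small, $R_x \cap B(x,r)$ splits $B(x,r) \cap \Omega$ into two open components $A_1, A_2$ meeting $\partial\Omega$ in two arcs $\Gamma_1, \Gamma_2$ of positive $\mathcal{H}^1$-measure. In a thin tubular neighborhood of the interior of $R_x$, one side belongs to $E_t$ and the other to $E_t^c$; say $A_2$ lies in $E_t^c$ near $R_x$. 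Then, on a positive-measure subarc of $\Gamma_2$ adjoining $x$, one has $T\chi_{E_t} = 0$ $\mathcal{H}^1$-a.e., contradicting the first paragraph. Hence $g(x) = t$, as required.

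The main obstacle I anticipate is securing the existence of a positive-measure subarc of $\Gamma_2$ on which the trace vanishes, despite the possibility that infinitely many other components of $\partial E_t$ accumulate at $x$. Lemma \ref{lem:weakmaximumprinciple} forbids any such component from passing \emph{through} $x$, but accumulation \emph{toward} $x$ is not a priori excluded. However, any accumulating component would itself force additional zero-trace subarcs on $\partial\Omega \cap B(x,\delta)$, only strengthening the contradiction, so no genuine obstacle arises. The opposite case $g(x) < t$ is treated symmetrically, since by Theorem \ref{thm:anisotropicbgg} the complement $\Omega \setminus E_t$ is also a $\phi$-least gradient function, and its trace equals $\chi_{\{g < t\}}$ $\mathcal{H}^1$-a.e.
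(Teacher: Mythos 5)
Your proof is correct and follows essentially the same route as the paper: identify the unique connected component $R_x$ of $\partial E_t$ through $x$ via Lemma \ref{lem:weakmaximumprinciple}, use geodesic convexity and uniqueness of geodesics with prescribed initial velocity to get transversality of $R_x$ to $\partial\Omega$ at $x$, and then derive $g(x)=t$ from continuity of $g$ at $x \notin D$ by comparing the trace of $\chi_{E_t}$ on the two boundary arcs adjacent to $x$. The only difference is that the paper delegates this last trace-comparison step to the cited lemmas of \cite{GRS2017NA} and \cite{MNT}, whereas you write it out explicitly (including the discussion of possible accumulation of other components at $x$), which is consistent with those references.
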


\begin{proof}
Suppose that $x \in \partial E_t \cap (\partial \Omega \backslash D)$. Denote by $R_x$ the unique (due to Lemma \ref{lem:weakmaximumprinciple}) connected component of $\partial E_t$ passing through $x$. Then, by geodesic convexity of $\Omega$ and uniqueness of geodesic from $x$ with given initial velocity, $R_x$ makes a positive angle with $\partial\Omega$ at $x$. Yet, $g$ is continuous at $x$ and so, we may proceed in the same way as in the proof of Lemma \cite[Lemma 3.8]{GRS2017NA} (see also \cite[Lemma 3.4]{MNT}) to obtain that $g(x) = t$.
\end{proof}

\begin{lemma}\label{lem:discontinuity}
Suppose that $\Omega$ is geodesically convex. Let $u$ be a precise representative of a solution to the weighted least gradient problem with boundary data $g \in BV(\partial\Omega)$. Then, $u$ is continuous at every point in $\Omega \backslash \mathcal{D}$.
\end{lemma}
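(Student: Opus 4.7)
The plan is to argue by contradiction. Assume that the precise representative $u$ is discontinuous at some $x_0 \in \Omega \setminus \mathcal{D}$. Then there exist $m < m'$ in the essential range of $u$ such that $x_0$ lies in the topological boundary of the superlevel set $E_t := \{u \geq t\}$ for every $t \in (m, m')$ (for instance, pick $m < u(x_0) < m'$ with $u(y_n) \to m'$ along some sequence $y_n \to x_0$; the symmetric case is analogous). By Theorem \ref{thm:anisotropicbgg}, each $\chi_{E_t}$ is a function of $\phi$-least gradient, so by the corollary preceding Lemma \ref{lem:weakmaximumprinciple}, the connected component of $\partial E_t$ containing $x_0$, call it $R_t$, is a transport ray, i.e. a maximal geodesic from some $x_1(t) \in \partial\Omega$ to some $x_2(t) \in \partial\Omega$.

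Next I would invoke the rigidity recalled in Section \ref{sec:beckmongeequivalence}: two distinct transport rays cannot meet at an interior point of either. Since $x_0 \in \Omega$, it cannot be a common endpoint of two different $R_t$, so all the transport rays $\{R_t\}_{t \in (m,m')}$ must coincide. Denote this common transport ray by $\gamma_{x_1, x_2}$ with $x_1, x_2 \in \partial\Omega$. Applying Lemma \ref{lem:boundarybehaviour} to each $E_t$ yields $x_1, x_2 \in g^{-1}(t) \cup D$ for every $t \in (m, m')$. Because $g$ takes a single value at each $x_i$, the condition $g(x_i) = t$ can hold for at most one $t$; consequently, since $(m, m')$ is uncountable, we must have $x_1, x_2 \in D$. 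But then $\gamma_{x_1, x_2} \subset \mathcal{D}$, so $x_0 \in \mathcal{D}$, contradicting the assumption.

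I expect the main obstacle to be the first step: relating the discontinuity of the precise representative $u$ at $x_0$ to the existence of a well-defined connected component of $\partial E_t$ passing through $x_0$, uniformly for $t$ in an interval. This requires identifying $\partial E_t \cap \Omega$ with the support of $|D\chi_{E_t}|$, which by the corollary is a disjoint union of transport rays meeting $\partial\Omega$ only at their endpoints, and then checking that $x_0$ sits on exactly one such ray $R_t$. Once this geometric setup is secured, the rest is a quick combination of the non-crossing property of transport rays with Lemma \ref{lem:boundarybehaviour}.
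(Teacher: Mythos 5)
Your proposal is correct and follows essentially the same route as the paper: discontinuity at $x_0$ puts it on $\partial E_t$ for more than one level $t$, the non-crossing of transport rays forces the corresponding connected components (each a transport ray by the preceding corollary) to coincide in a single ray through $x_0$, and Lemma \ref{lem:boundarybehaviour} then forces both endpoints into $D$ since $g$ cannot take two distinct values at a continuity point, whence $x_0 \in \mathcal{D}$. The only cosmetic difference is that you work with an uncountable interval of levels where two distinct levels already suffice, as in the paper.
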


\begin{proof}
Suppose that $u$ is not continuous at $x \in \Omega$. Then, there exist $t,s \in \mathbb{R}$ such that $t < s$ and $x \in \partial E_t \cap \partial E_s$. However, there is at most one transport ray $R_x$ passing through $x$. Since the connected components of $\partial E_t$ and $\partial E_s$ are transport rays, these sets coincide on the whole connected component passing through $x$ and are equal to $R_x$. By Lemma \ref{lem:boundarybehaviour}, this is only possible if this transport ray has both ends in the discontinuity set $D$, so $x \in R_x \subset \mathcal{D}$.
\end{proof}

The following result is the main result in this section. It is a generalisation of \cite[Theorem 1.1]{Gor2018JMAA} to the weighted least gradient problem. Notice that due to the fact that we use optimal transport methods, our proof is significantly shorter and simpler.

\begin{theorem}\label{thm:structure}
Suppose that $\Omega$ is geodesically convex. Let $u,v$ be two solutions to the weighted least gradient problem with boundary data $g$. Then, $u = v$ on $\Omega \backslash (\mathcal{C} \cup \mathcal{D})$, where both $u$ and $v$ are locally constant on $\mathcal{C}$.
\end{theorem}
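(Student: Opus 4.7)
The strategy is to combine three ingredients already established: the lattice property of $\phi$-least gradient functions, the characterisation of connected components of level set boundaries as transport rays (the corollary earlier in this section), and the boundary trace analysis in Lemma~\ref{lem:boundarybehaviour}.

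First I would show that $w:=\max(u,v)$ and $w':=\min(u,v)$ are again solutions of the weighted least gradient problem with trace $g$. This uses the identity
\begin{equation*}
\int_\Omega \phi(x,D\max(u,v)) + \int_\Omega \phi(x,D\min(u,v)) = \int_\Omega \phi(x,Du) + \int_\Omega \phi(x,Dv),
\end{equation*}
valid for any $u,v\in BV(\Omega)$ (since $\phi$ is convex in the second variable), together with the fact that $\max$ and $\min$ preserve the $BV$-trace on $\partial\Omega$. By Theorem~\ref{thm:anisotropicbgg}, every superlevel set of $u$, $v$, $w$, $w'$ is then a $\phi$-perimeter minimiser whose boundary decomposes into transport rays.

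Next I would fix $x\in\Omega\setminus(\mathcal{C}\cup\mathcal{D})$ and argue by contradiction: assume $s:=u(x)<v(x)=:\tau$. By Lemma~\ref{lem:discontinuity}, $u$ and $v$ are continuous at $x$; since $x\notin\mathcal{C}$, neither is locally constant in any neighbourhood of $x$. Continuity and failure of local constancy force $x\in\partial\{u\ge s\}\cap\partial\{v\ge\tau\}$, hence by the corollary above the connected components $R^u$ and $R^v$ of these boundaries through $x$ are transport rays associated to the optimal transport problem between the marginals $(\partial_\tau g)^\pm$. Since two distinct transport rays cannot share an interior point (the non-crossing property from Section~\ref{sec:beckmongeequivalence}, a consequence of uniqueness of geodesics with prescribed initial velocity under the $C^{1,1}$ regularity of $k$), and both rays pass through the interior point $x$, they must coincide: $R^u=R^v$. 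By Lemma~\ref{lem:boundarybehaviour}, the two endpoints of $R^u$ lie in $g^{-1}(s)\cup D$ while, applied to $v$, those of $R^v=R^u$ lie in $g^{-1}(\tau)\cup D$. Since $x\notin\mathcal{D}$, not both endpoints of $R^u$ can belong to $D$, so there is an endpoint $y\notin D$ with $g(y)=s=\tau$, contradicting $s<\tau$. The claim that $u$ and $v$ are locally constant on $\mathcal{C}$ is immediate from the definition of $\mathcal{C}$ as (a subset of) their common plateau set.

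The main obstacle is the step asserting $R^u=R^v$. The non-crossing property is stated in Section~\ref{sec:beckmongeequivalence} for transport rays of a single Kantorovich potential, whereas $u$ and $v$ may a~priori correspond via Theorem~\ref{thm:dualproblems} to different potentials $\psi_u,\psi_v$. This is reconciled by invoking the uniqueness of the optimal transport plan between $(\partial_\tau g)^+$ and $(\partial_\tau g)^-$ (Proposition~\ref{prop:transportplan}), which ensures that the set of transport rays is intrinsic to $g$ and independent of the choice of solution or potential, so that the non-crossing argument applies jointly to $R^u$ and $R^v$. A second delicate point is to justify $x\in\partial\{u\ge s\}$ from continuity at $x$ and non–local-constancy alone, which requires the standard observation that a continuous least gradient function cannot possess a strict interior extremum outside its plateau region.
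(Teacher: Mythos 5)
There is a genuine gap in your case analysis. The set $\mathcal{C}$ is the set where \emph{both} $u$ and $v$ are locally constant, so from $x\notin\mathcal{C}$ you may only conclude that \emph{at least one} of $u,v$ fails to be locally constant near $x$ --- not that neither is. Consequently the case where, say, $v$ is constant in a ball $B(x,r)$ with value $s$ while $u$ is not locally constant at $x$ (so $x\in\partial\{u\ge t\}$ for $t=u(x)$ but $x\notin\partial\{v\ge \sigma\}$ for any $\sigma$) is not covered by your argument: you cannot place $x$ on $\partial\{v\ge v(x)\}$, so the two-transport-rays contradiction never gets off the ground, and you also cannot dismiss the point as belonging to $\mathcal{C}$. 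This is precisely the most delicate case in the paper's proof (its Case 3): there one exploits continuity of $u$ at $x$ to produce a whole interval $[t-\varepsilon,t+\varepsilon]$ of levels whose boundary components $\Gamma_\tau$ enter $B(x,r)$, uses the non-crossing of transport rays against the boundary of the plateau component of $v$ to force $g\equiv s$ on an arc of $\partial\Omega$ containing the endpoints $x_\tau$, and then contradicts Lemma \ref{lem:boundarybehaviour} (which gives $g(x_t)=t\ne s$). Your cases 1, 2 and 4 do match the paper's argument, including the correct use of Lemma \ref{lem:boundarybehaviour} at the endpoints of the common ray.

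Two smaller points. First, the opening ``lattice'' step is both unnecessary (it is never used afterwards) and overstated: for general $u,v\in BV(\Omega)$ one only has the submodularity inequality $\int_\Omega\phi(x,D\max(u,v))+\int_\Omega\phi(x,D\min(u,v))\le\int_\Omega\phi(x,Du)+\int_\Omega\phi(x,Dv)$; equality is a consequence of minimality, not an identity. Second, to make the ray structure independent of which solution (hence which Kantorovich potential) you use, you invoke Proposition \ref{prop:transportplan}; but that proposition requires strict geodesic convexity and a non-atomic marginal, neither of which is assumed in this theorem ($g\in BV(\partial\Omega)$ allows atoms of $\partial_\tau g$). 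The intrinsic nature of the transport rays should instead be drawn from the duality relation $\psi(x)-\psi(y)=d_k(x,y)$ on $\mathrm{spt}(\Lambda)$ and the fact that the frame of rays does not depend on the choice of potential on the support of an optimal plan, as recalled in Sections \ref{sec:equivalenceeuclidean} and \ref{sec:beckmongeequivalence}.
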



\begin{proof}
Consider any point $x \in \Omega \backslash \mathcal{D}$. By Lemma \ref{lem:discontinuity}, both functions $u$ and $v$ are continuous at $x$. Recall that $E_t=\{u \geq t\}$ and set $F_t=\{v \geq t\}$. There are four possibilities:

{\flushleft 1.} We have $x \in \partial E_t \cap \partial F_t$. Then, since $u$ and $v$ are continuous at $x$, we have $u(x) = v(x) = t$.

{\flushleft 2.} We have $x \in \partial E_t \cap \partial F_s$, where $t \neq s$. We argue similarly as in the proof of Lemma \ref{lem:discontinuity}. There is at most one transport ray $R_x$ passing through $x$, and because the connected components of $\partial E_t$ and $\partial F_s$ are transport rays, they need to coincide on the whole connected component passing through $x$ and so, they are equal to $R_x$. But, by Lemma \ref{lem:boundarybehaviour}, this is only possible if the transport ray has both ends in the discontinuity set $D$, in which case by uniqueness of the geodesic between any two given points we have $R_x \subset \mathcal{D}$, so $x \in \mathcal{N}$, a contradiction.

{\flushleft 3.} We have $x \in \partial E_t$, but $x \notin \partial F_s$ for any $s \in \mathbb{R}$. Then, $v$ is constant in a ball $B(x,r)$ with value $s \in \mathbb{R}$. If $t = s$, the claim is proved; suppose otherwise. Since $u$ is continuous at $x$ with $x \in \partial E_t$, there exists an interval $\mathcal{I}_\varepsilon = [t-\varepsilon,t+\varepsilon]$ such that for all $\tau \in \mathcal{I}_\varepsilon$ a connected component $\Gamma_\tau$ of $\partial E_\tau$ intersects the ball $B(x,r)$. Denote by $x_\tau$ and $y_\tau$ the endpoints of $\Gamma_\tau$ and denote by $C_s$ the connected component of $\{ v \equiv s \}$ containing $B(x,r)$. Since transport rays cannot intersect inside $\Omega$, we have $\Gamma_\tau \cap \partial C_s = \emptyset$ for all $\tau \in \mathcal{I}_\varepsilon$; then, by checking the trace of $v$, we see that on the arc $(x_{t - \varepsilon}, x_{t + \varepsilon})$ on $\partial\Omega$ we have $g \equiv s$. On the other hand, since $x \notin \mathcal{D}$, we may assume that $g$ is continuous at $x_t$ and by Lemma \ref{lem:boundarybehaviour} we have that $g(x_t) = t$, a contradiction.

{\flushleft 4.} We have $x \notin \partial E_t$ for any $t \in \mathbb{R}$ and $x \notin \partial F_s$ for any $s \in \mathbb{R}$. Then, both $u$ and $v$ are constant in a ball $B(x,r)$ (possibly with different constants), so $x \in \mathcal{C}$.
\end{proof}

}

\section{$L^p$ estimates on the transport density}\label{sec:lpestimates}

In this section, we study the $L^p$ summability of the transport density $\sigma$ between $f^+$ and $f^-$. Assume that $\Omega$ is  geodesically strictly convex. From Proposition \ref{prop:transportplan}, we recall that there is a unique optimal transport plan $\Lambda$ and so, the transport density $\sigma$ is unique, provided that $f^+$ or $f^-$ is non-atomic. To prove $L^p$ estimates on $\sigma$, the strategy will be the following: we decompose $\sigma$ into two parts $\sigma^+$ and $\sigma^-$, then we approach the target (resp. source) measure $f^-$ (resp. $f^+$) with atomic measures, and under some geometric assumptions on $\Omega$, we prove that $\sigma^+$ (resp. $\sigma^-$) is in $L^p(\Omega)$ as soon as $f^+$ (resp. $f^-$) belongs to $L^p(\partial\Omega)$ with $p\leq 2$ (in order to obtain $L^p$ estimates on $\sigma$ with $p>2$, we need to assume more regularity on $f^\pm$). We note that this $L^p$ summability of the transport density between two measures $f^+$ and $f^-$ which are in $L^p(\partial\Omega)$ was already proved in the Euclidean case (i.e., $k \equiv 1$) in \cite{DS}; more precisely, the authors of \cite{DS} proved that if $f^\pm \in L^p(\partial\Omega)$ then $\sigma \in L^p(\Omega)$ provided that $p \leq 2$ and $\Omega$ is uniformly convex. 

First, we are going to  introduce a geometric lemma that will be the key point in the proof of the $L^p$ estimates on the transport density $\sigma$. So, let the manifold $\mathbb{R}^2$ be equipped with the conformal metric $d_k$. Fix $x_0\in \partial\Omega$ and let $\alpha(s)$, $s \in [-\varepsilon,\varepsilon]$, be an arc in $\partial\Omega$. For each $s \in [-\varepsilon,\varepsilon]$, we denote by $\gamma_s$ the geodesic between $\alpha(s)$ and $x_0$. Let $\nu(s)$ be the initial unit tangent vector to $\gamma_s$ and $\tau(s)$ be the (weighted) length of $\gamma_s$.

\begin{lemma}\label{geometric lemma}
Assume that $\Omega$ is a bounded and geodesically convex domain, and $k \in C^{1,1}(\overline{\Omega})$ with $0<k_{\min} \leq k\leq k_{\max}<\infty$. Then, there exists a constant $C< \infty$ depending only on $\mbox{diam}(\Omega),\,k_{\min},\,k_{\max}$, $||\nabla k||_\infty$ and \,$||D^2 k ||_\infty$
such that, for a.e. $s \in \partial\Omega$, we have
\begin{equation}\label{lower bound on the Jacobian 1 initial}
\det(D_{(s,t)}\gamma_{s}(t)) \geq (1-t)^C\,k^{-1}(s)\,\tau(s)[\nu(s) \cdot n(s)],\,\,\mbox{for all}\,\,\,t \in [0,1].
\end{equation}
\end{lemma}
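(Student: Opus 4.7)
The plan is to interpret $\det(D_{(s,t)}\gamma_s(t))$ as the Euclidean wedge $\partial_s\gamma_s(t)\wedge\partial_t\gamma_s(t)$ and to estimate it via Jacobi field analysis for the conformal metric $g=k^2\delta_{ij}$, working in geodesic polar coordinates centered at $x_0$. First I would use the constant-speed parametrization $k(\gamma_s(t))|\gamma_s'(t)|=\tau(s)$ to get $\gamma_s'(0)=(\tau(s)/k(\alpha(s)))\nu(s)$, together with $\partial_s\gamma_s(0)=\alpha'(s)$ (assuming $\alpha$ is arc-length parametrized) and the wedge identity $\alpha'(s)\wedge\nu(s)=\pm(\nu(s)\cdot n(s))$, giving $\det(D\gamma_s(0))=\tau(s)k^{-1}(s)(\nu(s)\cdot n(s))$. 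In particular, the inequality is tight at $t=0$, and it suffices to bound the ratio $\det(D\gamma_s(t))/\det(D\gamma_s(0))$ from below by $(1-t)^C$.

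Next I would pass to geodesic polar coordinates $(r,\theta)$ based at $x_0$, where the metric reads $g=dr^2+G(r,\theta)\,d\theta^2$. Each $\gamma_s$ is a radial geodesic from $x_0$ traversed in reverse, so the map becomes $(s,t)\mapsto(\tau(s)(1-t),\theta(s))$ with $\theta(\cdot)$ independent of $t$. Comparing the Riemannian area element in the two parametrizations ($k^2\,dx$ Euclidean versus $\sqrt{G}\,dr\,d\theta$ polar) yields
\begin{equation*}
\det(D\gamma_s(t))=\frac{\sqrt{G(\tau(s)(1-t),\theta(s))}\,\tau(s)\,|\theta'(s)|}{k^2(\gamma_s(t))}.
\end{equation*}
Since conformal metrics preserve angles, the normal component of $\alpha'(s)$ to the geodesic $\gamma_s$ at $t=0$ gives the identity $\sqrt{G(\tau(s),\theta(s))}\,|\theta'(s)|=k(\alpha(s))\,|\nu(s)\cdot n(s)|$, and substituting this reduces the claim to
\begin{equation*}
\frac{\sqrt{G(\tau(s)(1-t),\theta(s))}}{\sqrt{G(\tau(s),\theta(s))}}\cdot\frac{k^2(\alpha(s))}{k^2(\gamma_s(t))}\geq(1-t)^C.
\end{equation*}

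Finally, the function $r\mapsto\sqrt{G(r,\theta)}$ satisfies the radial Jacobi equation $\partial_r^2\sqrt{G}+K\sqrt{G}=0$ with initial conditions $\sqrt{G(0,\theta)}=0$ and $\partial_r\sqrt{G}(0,\theta)=1$, where the Gaussian curvature of the conformal metric is $K=-k^{-2}\Delta\log k$, uniformly bounded by a constant $M$ depending only on $k_{\min}$, $||\nabla k||_\infty$ and $||D^2 k||_\infty$. The radial distance stays in $[0,R_{\max}]$ with $R_{\max}\leq k_{\max}\,\mbox{diam}(\Omega)$, and geodesic convexity of $\Omega$ rules out conjugate points in this range, so $\sqrt{G(r,\theta)}>0$ on $(0,R_{\max}]$. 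Sturm comparison with the constant-curvature equations $h''\pm Mh=0$ yields two-sided bounds $c_1 r\leq\sqrt{G(r,\theta)}\leq c_2 r$, and hence $\sqrt{G(\tau(s)(1-t),\theta(s))}/\sqrt{G(\tau(s),\theta(s))}\geq(c_1/c_2)(1-t)$; combined with the uniform $k$-bound this gives $\det(D\gamma_s(t))\geq c_0(1-t)\tau(s)k^{-1}(s)(\nu(s)\cdot n(s))$ for some $c_0\in(0,1]$. The main obstacle will be upgrading this linear-in-$(1-t)$ bound to the precise polynomial form $(1-t)^C$ without a leading constant; I would handle this by applying a Riccati-type estimate to the logarithmic derivative $\partial_r\log\sqrt{G}$, which satisfies a first-order equation whose right-hand side is controlled by $K$ and whose blow-up rate near $r=0$ is exactly $1/r$, and integrating it directly yields a bound of the form $\sqrt{G(r(1-t),\theta)}/\sqrt{G(r,\theta)}\geq(1-t)^{C_1}$ with $C_1$ depending only on $M$ and $R_{\max}$, large enough to further absorb the bounded $k$-ratio into the exponent.
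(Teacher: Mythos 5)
Your route is essentially sound and arrives at the same mechanism as the paper, but packaged quite differently. The paper works directly with the matrix ODE for the Jacobian of $\Psi(s,t)=\exp_{\alpha(s)}[t\,\tau(s)\nu(s)]$ in a parallel frame, obtains $\mathcal{J}'=\mathrm{tr}[A-B]\,\mathcal{J}$ with $\mathrm{tr}[B]=0$, and then cites the semiconcavity estimate $D^2[d_k(\cdot,x_0)]\leq \frac{C}{d_k(\cdot,x_0)}I$ from Cannarsa--Sinestrari as a black box to get $\mathrm{tr}[A]\geq -C/(1-t)$ and integrate. You instead pass to geodesic polar coordinates, reduce the whole determinant to the scalar quantity $\sqrt{G(r,\theta)}$ times an explicit $k$-ratio, and re-derive the needed estimate $\partial_r\log\sqrt{G}\leq C/r$ from scratch via Riccati comparison for the Jacobi equation with $|K|=|k^{-2}\Delta\log k|\leq M$. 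These are the same inequality in disguise (the radial logarithmic derivative of $\sqrt{G}$ is the Laplacian-comparison content of the semiconcavity bound), so both proofs give the same constant dependence; yours is more self-contained and exploits the two-dimensional conformal structure explicitly, while the paper's is shorter because it outsources the key Hessian bound. Your identification that the naive linear bound $c_0(1-t)$ must be upgraded to $(1-t)^C$ via the Riccati estimate, and that the inequality is an equality at $t=0$, is exactly the right diagnosis.

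Two caveats. First, the statement that Sturm comparison yields a lower bound $c_1 r\leq\sqrt{G(r,\theta)}$ on all of $[0,R_{\max}]$ is not justified from $K\leq M$ alone: the comparison function $\sin(\sqrt{M}r)/\sqrt{M}$ vanishes at $r=\pi/\sqrt{M}$, which may lie below $R_{\max}$. This intermediate claim is harmless only because your final argument does not actually use it --- the Riccati bound on $\partial_r\log\sqrt{G}$ suffices --- but as written it is a gap. Second, be careful with ``absorbing the bounded $k$-ratio into the exponent'': the crude two-sided bound $k_{\min}^2/k_{\max}^2\leq k^2(\alpha(s))/k^2(\gamma_s(t))$ is a constant possibly less than $1$ and cannot be dominated by $(1-t)^{C}$ uniformly down to $t=0$, where the target inequality is an equality. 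What you need is the Lipschitz estimate $k(\gamma_s(t))/k(\alpha(s))\leq\exp(C't)$, coming from $\|\nabla k\|_\infty$, $k_{\min}$ and $\mathrm{diam}(\Omega)$, combined with $\exp(-2C't)\geq(1-t)^{2C'}$; since those quantities are among the allowed constants, this is a one-line fix, but it must be the Lipschitz bound and not merely boundedness of $k$.
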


\begin{proof}
For $s \in [-\varepsilon,\varepsilon]$, let $(e_1,e_2)$ be a basis such that $e_1=\alpha^\prime(s)$ and $e_2= \nu(s)$. Let us parallel-transport along
the geodesic $\gamma_s$ to define a new family of basis $(e_1(t),e_2(t))$. Let [·, ·] be the Lie bracket and $\nabla$ be the Levi-Civita connection. Let us denote by $\exp$ the Riemannian exponential map.
  For all $s\in [-\varepsilon,\varepsilon]$ and $t \in [0,1]$, set 
$$\Psi(s,t):=\gamma_s(t)=\exp_{\alpha(s)}[t\,\tau(s)\,\nu(s)].$$  
Now, 
we define the vector fields $J_1$ and $J_2$ as follows: 
$$J_1(s,t)=\frac{d}{d \delta}_{|\delta=0}\Psi(s+\delta,t)\,\,\,\,\,\mbox{and}\,\,\,\,\,J_2(s,t)=\frac{d}{d \delta}_{|\delta=0}\Psi(s,t+\delta).$$
Set 
$$J(s,t)=(J_1(s,t),J_2(s,t))=D \Psi(s,t) \,\,\,\,\,\mbox{and}\,\,\,\,\,\,\mathcal{J}(s,t)=\det [J(s,t)].$$
Thanks to \cite[Theorem 11.3]{Vil}, one can show that this Jacobian $\mathcal{J}$ cannot vanish, except possibly at the endpoints of the geodesic $\gamma_{s}$.
So, we have 
$$\mathcal{J}^\prime(s,t)=\mbox{tr}[J^\prime(s,t)\,J(s,t)^{-1}]\,\mathcal{J}(s,t).$$\\
Let us denote by $\mathrm{d}\Psi$ the differential map of $\Psi$. The fact that $[\partial_{e_1},\partial_{e_2}]=0$ implies that $J_1$ and $J_2$ commute, since
$$[J_1,J_2]=[\mathrm{d}\Psi(\partial_{e_1}),\mathrm{d}\Psi(\partial_{e_2})]=\mathrm{d}\Psi[\partial_{e_1},\partial_{e_2}]=0.$$
Then, we have 
$$\nabla_{J_1}J_2=\nabla_{J_2}J_1.$$
Yet, 
$$J_1(s,t)=J_{11}(s,t)e_1(t)+J_{21}(s,t)e_2(t).$$ 
Hence, 
$$\nabla_{J_2}J_1=\nabla_{\gamma_s^\prime}J_1=J_{11}^\prime(s,t)e_1(t)+J_{11}(s,t)e_1^\prime(t)+J_{21}^\prime(s,t)e_2(t)+J_{21}(s,t)e_2^\prime(t).$$
But, $e_2^\prime(t)=\nabla_{\gamma_s^\prime}\,\gamma_s^\prime=0$\, since \,$\gamma_s^\prime$\, is a geodesic. In addition, we have $\nabla_{\gamma_s^\prime}\,e_1(t)=\Gamma_{21}^1 e_1(t) + \Gamma_{21}^2 e_2(t)$, where $\Gamma_{21}^1$ and $\Gamma_{21}^2$ denote the Christoffel symbols. Then, we get  
\begin{equation} \label{eq.6.1}
\nabla_{J_2}J_1=
(J_{11}^\prime(s,t)+J_{11}(s,t)  \Gamma^1_{21})e_1(t)+(J_{21}^\prime(s,t)+J_{11}(s,t) \Gamma^2_{21})e_2(t).
\end{equation} 
On the other hand, we have
$$\nabla_{J_1}J_2=J_{11}(s,t)\nabla_{e_1(t)}J_2+J_{21}(s,t)\nabla_{e_2(t)}J_2.$$\\
Let $A(t)$ be the matrix, in the basis $\{e_1(t),e_2(t)\}$, associated with the endomorphism $X \mapsto
\nabla_X J_2$.
Then, one has 
$$J_{11}(s,t)\nabla_{e_1(t)}J_2+J_{21}(s,t)\nabla_{e_2(t)}J_2$$
$$=J_{11}(s,t)[A_{11}(t)e_1(t) +A_{21}(t)e_2(t)]+J_{21}(s,t)[A_{12}(t)e_1(t) +A_{22}(t)e_2(t)]$$
$$=[J_{11}(s,t)A_{11}(t) + J_{21}(s,t)A_{12}(t)]e_1(t) +[J_{11}(s,t)A_{21}(t)+J_{21}(s,t)A_{22}(t)]e_2(t).$$\\
From \eqref{eq.6.1}, we get
$$(J_{11}^\prime(s,t)+J_{11}(s,t)  \Gamma^1_{21})e_1(t)+(J_{21}^\prime(s,t)+J_{11}(s,t) \Gamma^2_{21})e_2(t)$$
$$=[J_{11}(s,t)A_{11}(t) + J_{21}(s,t)A_{12}(t)]e_1(t) +[J_{11}(s,t)A_{21}(t)+J_{21}(s,t)A_{22}(t)]e_2(t).$$
Hence, 
$$J^\prime =[A-B] J,$$
where
$$B=\begin{pmatrix}
\Gamma^1_{21} & 0\\
\Gamma^2_{21} & 0
\end{pmatrix}.$$
This implies that
$$\mathcal{J}^\prime(s,t)=\mbox{tr}[A-B]\,\mathcal{J}(s,t).$$
 A direct computation shows that
$$\Gamma_{21}^1=\partial_{e_2}\log[\sqrt{1-(k(\gamma_s(t))^2\,e_1(t) \cdot e_2(t))^2}]=0,$$ 
 so $\mbox{tr}[B]=0$. Moreover, we have $J_2(s,t)=\gamma_s^\prime(t)=-k^{-1}(\gamma_s(t))\,\tau(s)\,\frac{\nabla d_k(\gamma_s(t),x_0)}{|\nabla d_k(\gamma_s(t),x_0)|}$. Now, it is well known that the distance function $d_k(\cdot,x_0)$ to $x_0$ is locally semiconcave in $\mathbb{R}^2 \backslash \{x_0\}$ with $D^2[d_k(x,x_0)] \leq \frac{C}{d_k(x,x_0)}I$ for some constant $C$ depending on $\mbox{diam}(\Omega)$, $k_{\min}$, $k_{\max}$, $||\nabla k||_\infty$ and $||D^2 k||_\infty$ (see, for instance, \cite{CanSin}). This yields that 
$$\mbox{tr}[A] \geq \frac{-C}{(1-t)}.$$
Hence, 
$$\mathcal{J}^\prime(s,t)\geq \frac{-C}{(1-t)}\,\mathcal{J}(s,t).$$
We infer that
$$\log[\mathcal{J}(s,t)] - \log[\mathcal{J}(s,0)] \geq C\,\log(1-t).$$
Finally, we get
$$\mathcal{J}(s,t)  \geq \mathcal{J}(s,0)\,(1-t)^{C}.$$\\
Yet, $J_1(s,0)=\alpha^\prime(s)$\, and \,$J_2(s,0)=k^{-1}(s)\,\tau(s)\,\nu(s)$. Hence, $\mathcal{J}(s,0)=k^{-1}(s)\,\tau(s)[\nu(s) \cdot n(s)]$. Consequently, we get the following estimate: 
$$\mathcal{J}(s,t)  \geq (1-t)^C\,k^{-1}(s)\,\tau(s)[\nu(s) \cdot n(s)]. \qedhere $$
\end{proof}

In order to prove $L^p$ summability on the transport density $\sigma$, we need also to introduce a definition that generalizes the notion of uniform convexity in the Riemannian case. Let us assume that for all $x,\,y \in \overline{\Omega}$, there is a unique geodesic $\gamma_{x,y}$ from $x$ to $y$. For $x \in \partial\Omega$, let us denote by $\gamma_x$ a geodesic starting at $x$ with initial unit tangent vector $\nu(x)$ such that the endpoint $\gamma_x(1) \in \partial\Omega$ and  by $\tau(x)$ the weighted length of this geodesic $\gamma_x$. It is not difficult to see that a domain $\Omega$ is geodesically convex (resp.  geodesically strictly convex) if and only if $\nu(x) \cdot n(x) \geq0$ (resp. $\nu(x) \cdot n(x) > 0$), for all $x\in\partial\Omega$ and all geodesics $\gamma_x$, where $-n(x)$ is any unit vector in the exterior normal cone to $\Omega$ at $x$.  
Now, we define the uniform geodesic convexity as follows:
 \begin{definition} \label{uniform geodesic convexity definition}
 We say that $\Omega$ is  geodesically uniformly convex if there is a constant $c>0$ such that, for any $x \in \partial\Omega$ and all geodesics $\gamma_x$, $\nu(x) \cdot n(x) \geq c\, \tau(x)$. 
 \end{definition}
 
 Our main result in this section is the following theorem.

\begin{theorem} \label{L^p estimates}
Suppose that $\Omega$ is strictly geodesically convex. Then, if $f^+ \in L^1(\partial\Omega)$, the transport density $\sigma$ belongs to $L^1(\Omega)$. Moreover, for all $p \in [1,2]$, 
we have $\sigma \in L^p(\Omega)$
as soon as $f^+$ and $f^-$ are both in $L^p(\partial\Omega)$ and $\Omega$ is uniformly geodesically convex.  
\end{theorem}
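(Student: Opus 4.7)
The strategy is to decompose $\sigma = \sigma^+ + \sigma^-$ with $\tau=1/2$, approximate $f^-$ by atomic measures so that the optimal plan becomes explicit, and convert the defining integral for $\sigma^+$ into a Lebesgue integral on $\Omega$ via the change of variables induced by the geodesic exponential map. Reversing the orientation of geodesics interchanges $\sigma^+ \leftrightarrow \sigma^-$ and $f^+ \leftrightarrow f^-$, so it suffices to bound $\sigma^+$ in terms of $f^+$; the bound on $\sigma^-$ in terms of $f^-$ is then identical.

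My first step is to upgrade Lemma~\ref{geometric lemma} to a two-sided estimate
$$c_0\, \tau(s)(\nu(s)\cdot n(s)) \,\leq\, |\det D\Psi(s,t)| \,\leq\, c_1\, \tau(s)(\nu(s)\cdot n(s)) \quad \forall\, t \in [0,1/2].$$
The upper bound comes from the same ODE $\mathcal{J}' = \mathrm{tr}(A-B)\,\mathcal{J}$ in the proof of Lemma~\ref{geometric lemma}: the $C^{1,1}$-regularity of $k$ and the absence of the cut locus along interior minimising geodesics in the strictly geodesically convex domain $\Omega$ provide two-sided Hessian bounds for $d_k(\cdot,x_0)$, so that $\mathrm{tr}(A)\leq C/(1-t)$ as well, and integrating yields $\mathcal{J}(s,t) \leq (1-t)^{-C}\mathcal{J}(s,0)$. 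I then approximate $f^-$ weakly-$*$ by atomic measures $f_n^- = \sum_i m_{n,i}\,\delta_{x_{n,i}}$ with $\|f_n^-\|_{L^p(\partial\Omega)} \leq \|f^-\|_{L^p(\partial\Omega)}$ (via a dyadic partition of $\partial\Omega$, for instance); by Proposition~\ref{prop:transportplan}, $\Lambda_n = (\mathrm{Id},T_n)_\#f^+$, and setting $E_{n,i}:= T_n^{-1}(\{x_{n,i}\})$ and $\Psi_i(s,t):= \gamma_{\alpha(s),x_{n,i}}(t)$ gives the decomposition $\sigma_n^+ = \sum_i \sigma_{n,i}^+$, where each $\sigma_{n,i}^+$ is the pushforward of $\tau_i(s)f^+(s)\,\mathrm{d}s\otimes\mathrm{d}t$ from $E_{n,i}\times[0,1/2]$ via $\Psi_i$. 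Non-crossing of transport rays (Section~\ref{sec:beckmongeequivalence}) makes these pushforwards essentially disjointly supported, and the two-sided Jacobian bound yields the pointwise density estimate
$$g_{n,i}^+(\Psi_i(s,t)) \,=\, \frac{\tau_i(s)\, f^+(s)}{|\det D\Psi_i(s,t)|} \,\leq\, \frac{C\, f^+(s)}{(\nu_i \cdot n)(s)}.$$

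The $L^p$-estimate with $p\in[1,2]$ under uniform geodesic convexity follows by changing variables back to $(s,t)$ and applying the upper Jacobian bound:
$$\|\sigma_{n,i}^+\|_{L^p(\Omega)}^p \,\leq\, C^p\int_{E_{n,i}}\!\int_0^{1/2}\frac{f^+(s)^p}{(\nu_i \cdot n)^p}\,|\det D\Psi_i|\,\mathrm{d}t\,\mathrm{d}s \,\leq\, \tfrac{C'}{2}\int_{E_{n,i}}\frac{\tau_i(s)\, f^+(s)^p}{(\nu_i \cdot n)^{p-1}}\,\mathrm{d}s.$$
Uniform convexity gives $\tau_i/(\nu_i \cdot n)^{p-1} \leq c^{1-p}\tau_i^{2-p} \leq c^{1-p}\,\mathrm{diam}_k^{\,2-p}$ for $p\leq 2$; summing over $i$ yields $\|\sigma_n^+\|_{L^p(\Omega)} \leq C''\|f^+\|_{L^p(\partial\Omega)}$. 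Proposition~\ref{stability} combined with weak lower semi-continuity of the $L^p$-norm (extracting a weakly convergent subsequence of densities, for $p>1$) passes this estimate to the limit and gives $\sigma^+\in L^p(\Omega)$; the symmetric argument gives $\sigma^-\in L^p(\Omega)$ from $f^-\in L^p(\partial\Omega)$.

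For the $L^1$-statement (only $f^+\in L^1$, no uniform convexity), the same chain produces the uniform bound $\|\sigma_n^+\|_{L^1(\Omega)} \leq C\,\mathrm{diam}_k\,\|f^+\|_{L^1(\partial\Omega)}$. Transferring absolute continuity to the limit is the delicate point: I would split $\partial\Omega$ along the level sets $\{\nu\cdot n \geq \varepsilon\}$, so that on the ``good'' set densities are uniformly dominated by $Cf^+/\varepsilon\in L^1$, while on its complement strict geodesic convexity together with $f^+\in L^1(\partial\Omega)$ forces the $f^+$-mass, and hence the corresponding part of $\sigma_n^+$, to vanish as $\varepsilon\downarrow 0$; Dunford--Pettis then yields weak $L^1$-compactness of the good densities and hence $\sigma^+\in L^1(\Omega)$, with $\sigma^-\in L^1(\Omega)$ handled by the symmetric argument after atomic approximation of $f^+$. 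The principal technical obstacle is therefore the upper Jacobian bound of the first step, which rests on two-sided Hessian control of $d_k(\cdot,x_0)$ along interior geodesics; once it is available, the rest is careful bookkeeping of the change of variables and a standard limit argument via Proposition~\ref{stability}.
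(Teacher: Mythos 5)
Your overall architecture --- atomic approximation of $f^-$, the splitting $\sigma=\sigma^++\sigma^-$, the change of variables $(s,t)\mapsto\gamma_s(t)$ along the geodesics into the atoms, the limit passage via Proposition \ref{stability}, and the symmetric argument for $\sigma^-$ --- is exactly the paper's. The genuine problem is your very first step: the upper Jacobian bound $\mathcal{J}(s,t)\le c_1\,\tau(s)\,(\nu(s)\cdot n(s))$ on $[0,1/2]$, which you correctly flag as the ``principal technical obstacle'' but do not establish. Integrating the ODE $\mathcal J'=\mathrm{tr}(A-B)\,\mathcal J$ upwards requires an upper bound on $\mathrm{tr}(A)$, i.e.\ a uniform \emph{lower} Hessian bound $D^2 d_k(\cdot,x_0)\ge -\tfrac{C}{d_k(\cdot,x_0)}I$ (semiconvexity). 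This is a genuinely different and more delicate statement than the semiconcavity $D^2 d_k\le \tfrac{C}{d_k}I$ used in Lemma \ref{geometric lemma}: semiconcavity of a Riemannian distance function holds under two-sided bounds on $k$ and its derivatives, whereas the lower Hessian bound degenerates near conjugate points, and uniqueness of minimizing geodesics in $\overline\Omega$ neither excludes conjugate points at the endpoints nor yields a bound that is uniform as the atom $x_0$ ranges over $\partial\Omega$. As written, your pointwise density estimate followed by integration against $|\det D\Psi_i|$ therefore rests on an unproven and nonstandard ingredient.

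The fix is to observe that the upper bound is never needed. Factor $g^p\,\mathcal J=(\tau f^+|\alpha'|)^p\,\mathcal J^{1-p}$ instead of $g^p\cdot\mathcal J$: then only $\mathcal J^{p-1}$ sits in the denominator, and the lower bound of Lemma \ref{geometric lemma} combined with uniform geodesic convexity ($\nu\cdot n\ge c\,\tau$, hence $\mathcal J\ge C^{-1}(1-t)^{C}\tau^2$) gives $\tau^p\mathcal J^{1-p}\le C(1-t)^{-C(p-1)}\tau^{2-p}$, which is $t$-integrable on $[0,\tau_0]$ for any $\tau_0<1$ and bounded in $s$ for $p\le 2$; this is precisely the paper's computation, and for $p=1$ the Jacobian drops out altogether, so no convexity beyond strict geodesic convexity is used. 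On the positive side, your concern about the $p=1$ limit passage is legitimate and is glossed over in the paper: a uniform $L^1$ bound on $\sigma_n^+$ does not by itself make the weak-$*$ limit absolutely continuous, so some equi-integrability (Dunford--Pettis) argument is indeed called for; note, however, that your proposed splitting along $\{\nu\cdot n\ge\varepsilon\}$ is shaky as stated, because $\nu$ is the direction of the transport ray and hence depends on $T_n$, so these sets vary with $n$.
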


\begin{proof}
We will use a similar strategy as in the proof of \cite[Proposition 3.1]{DS}. First, assume that the target measure $f^-$ is atomic with $n$ atoms $\{x_{i}\,:\,1 \leq i \leq n\}$. Let $\Gamma_i$ be the set of points $x$ on $\mbox{spt}(f^+)$ which are transported to the atom $x_i$ and, let us denote by $\Omega_{i}$ the set of points on the geodesics between $\Gamma_i$ and $x_{i}$. As \,$\Omega$ is geodesically convex, then all these sets $\Omega_{i}$ are subsets of $\overline{\Omega}$. Moreover, thanks to the fact that the transport rays cannot intersect at an interior point of either of them, the sets $\Omega_i$ are essentially disjoint. Let us decompose the transport density $\sigma$ into two parts $\sigma=\sigma^+ + \sigma^-$, where $\sigma^+$ and $\sigma^-$ are defined as follows: for a fixed $\tau \in [0,1]$, we set
$$<\sigma^+,\phi>:=\int_{\overline{\Omega} \times \overline{\Omega}} \int_0^{\tau} \phi(\gamma_{x,y}(t))\,d_k(x,y)\,\mathrm{d}t\,\mathrm{d}\Lambda (x,y),\,\,\,\,\mbox{for all}\,\,\,\,\phi \in C(\overline{\Omega}),$$
and 
$$<\sigma^-,\phi>:=\int_{\overline{\Omega} \times \overline{\Omega}} \int_{\tau}^1 \phi(\gamma_{x,y}(t))\,d_k(x,y)\,\mathrm{d}t\,\mathrm{d}\Lambda (x,y),\,\,\,\,\mbox{for all}\,\,\,\,\phi \in C(\overline{\Omega}).$$ \\  
First, we will show $L^p$ estimates on $\sigma^+$. Recalling Proposition \ref{prop:transportplan}, there is a unique optimal transport map $T$ from $f^+$ to $f^-$. Then, we have
$$<\sigma^+,\phi>=\int_{\partial\Omega} \int_0^{\tau} \phi(\gamma_{x,T(x)}(t))\,d_k(x,T(x))\,\mathrm{d}t\,\mathrm{d}f^+ (x),\,\,\,\,\mbox{for all}\,\,\,\,\phi \in C(\overline{\Omega}).$$ \\ 
For every $x \in \Gamma_{i}$, we have $T(x)=x_{i}$. Hence, we find that
$$<\sigma^+,\phi>=\sum_{i=1}^n\,\int_{\Gamma_{i}} \int_0^{\tau} \phi(\gamma_{x,x_i}(t))\,d_k(x,x_i)\,\mathrm{d}t\,\mathrm{d}f^+ (x),\,\,\,\,\mbox{for all}\,\,\,\,\phi \in C(\overline{\Omega}).$$
Fix $i \in \{1,...,n\}$ and consider $\sigma^+_{i}$ the restriction of $\sigma^+$ to $\Omega_i$. Let $\alpha(s)$, where $s \in [-\varepsilon,\varepsilon]$, be a parameterization of $\Gamma_i$. Then, we have
$$<\sigma_i^+,\phi>=\int_{-\varepsilon}^\varepsilon \int_0^{\tau} \phi(\gamma_{s}(t))\,d_k(\alpha(s),x_i)\,f^+(\alpha(s))\,|\alpha^\prime(s)|\,\mathrm{d}t\,\mathrm{d}s,\,\,\,\,\mbox{for all}\,\,\,\,\phi \in C(\overline{\Omega}),$$
where $\gamma_s$ denotes the geodesic from the point \,$\alpha(s)$\, to the atom \,$x_i$. Set $y=\gamma_s(t)$, for every $s \in [-\varepsilon,\varepsilon]$ and $t \in [0,\tau]$. Then, one has     
$$<\sigma_{i}^+,\phi>= \int_{\Omega_{i}}\phi(y)\, d_k(\alpha(s),x_i)\,f^+(\alpha(s))\,|\alpha^\prime(s)|\,{\mathcal{J}(s,t)}^{-1}\,\mathrm{d}y,\,\,\,\mbox{for all}\,\,\,\,\phi \in C(\overline{\Omega}),$$
where
$$\mathcal{J}(s,t):=\det[D_{(s,t)} \gamma_{s}(t)].$$
Hence, 
$$\sigma_i^+(y)=\frac{|\alpha^\prime(s)|\,d_k(\alpha(s),x_i)\,f^+(\alpha(s))}{\mathcal{J}(s,t)},\,\,\,\,\mbox{for a.e.}\,\,\,y \in \Omega_i.$$ \\
Set $\tau(s)=d_k(\alpha(s),x_i)$.
Then,     
 $$||\sigma_{i}^+||_{L^p(\Omega_{i})}^p$$
\begin{equation} \label{principal estimate}
= \int_{\Omega_{i}}\frac{|\alpha^\prime(s)|^p\tau(s)^p\,{f^+(\alpha(s))}^p}{{\mathcal{J}(s,t)}^p}\,\mathrm{d}y=\int_{-\varepsilon}^\varepsilon\int_{0}^{\tau}\frac{|\alpha^\prime(s)|^p\tau(s)^p\,{f^+(\alpha(s))}^p}{{\mathcal{J}(s,t)}^{p-1}}\,\mathrm{d}t\,\mathrm{d}s. \end{equation} \\
 For $p = 1$, this gives us the following estimate:
$$||\sigma_i^+||_{L^1(\Omega_i)}\leq C||f^+||_{L^1(\Gamma_i)}.$$
Hence, after taking a sum with respect to $i$ and ${{\tau=1}}$, we infer that $\sigma=\sigma^+$ is in $L^1(\Omega)$ and we have   
$$||\sigma||_{L^1(\Omega)}\leq C||f^+||_{L^1(\partial\Omega)}.$$\\
Now, let $\nu(s)$ be the initial tangent vector to $\gamma_s$. 
From Lemma \ref{geometric lemma}, there is a constant $C<\infty$ such that the following estimate holds 
\begin{equation} \label{lower bound on the Jacobian}
\mathcal{J}(s,t) \geq C^{-1}\,(1-t)^C\tau(s)\, [\nu(s) \cdot n(s)].
\end{equation}
If $\Omega$ is additionally uniformly geodesically convex (see Definition \ref{uniform geodesic convexity definition}), then there is a constant $c>0$ such that $\nu(s) \cdot n(s) \geq c \tau(s)$. So, we get that
$$\mathcal{J}(s,t) \geq C^{-1}\,{{(1-t)^C}}\tau(s)^2.$$
Thus, we continue the computation in \eqref{principal estimate} and we choose ${{\tau<1}}$. Then, we get that
$$
||\sigma_{i}^+||_{L^p(\Omega_{i})}^p \leq C^{p-1} {{\int_0^\tau \frac{1}{(1-t)^{C(p-1)}}\,\mathrm{d}t}} \int_{-\varepsilon}^\varepsilon\tau(s)^{2-p}\,|\alpha^\prime(s)|\,{f^+(\alpha(s))}^p\,\mathrm{d}s$$
\begin{equation}\label{eq: 6.50}
\leq C \int_{\Gamma_{i}}d_k(x,x_i)^{2-p}{f^+(x)}^p\,\mathrm{d}\mathcal{H}^1(x).
\end{equation}
For all $p \in [1,2]$, this implies that 
$$||\sigma_i^+||_{L^p(\Omega_i)}\leq C||f^+||_{L^p(\Gamma_i)}.$$
Hence,
$$||\sigma^+||_{L^p(\Omega)}\leq C||f^+||_{L^p(\partial\Omega)}.$$\\
Now, assume that $f^- \in \mathcal{M}^+(\partial\Omega)$ and let $(f_n^-)_n$ be a sequence of atomic measures such that $f_n^- \rightharpoonup f^-$. Let $\sigma_n$ be the transport density between $f^+$ and $f_n^-$. We have
\begin{equation} \label{(7.1)}
  ||\sigma_n^+||_{L^p(\Omega)}\leq C||f^+||_{L^p(\partial\Omega)}.  
\end{equation} 
Thanks to Proposition \ref{stability}, passing to the limit in \eqref{(7.1)} when the number of atoms $n \to \infty$, we infer that the positive measure $\sigma^+$ between $f^+$ and $f^-$ is in $L^p(\Omega)$ as soon as $f^+ \in L^p(\partial\Omega)$. Moreover, $\sigma^+$ satisfies the following estimate:
$$||\sigma^+||_{L^p(\Omega)}\leq C ||f^+||_{L^p(\partial\Omega)}.$$
But now, thanks to the uniqueness of the optimal transport plan $\Lambda$ between $f^+$ and $f^-$, it is not difficult to see that if $f^- \in L^p(\partial\Omega)$, then by approximating the source measure $f^+$ with atomic measures, one can obtain $L^p$ estimates on $\sigma^-$. And, we get that
$$||\sigma^-||_{L^p(\Omega)}\leq C ||f^-||_{L^p(\partial\Omega)}.$$  
Finally, we infer that
$$||\sigma||_{L^p(\Omega)}\leq C ||f||_{L^p(\partial\Omega)},\,\,\mbox{for every}\,\,p \in [1,2]. \qedhere $$ 
\end{proof}

 In addition, under the assumption that the source and target measures $f^\pm$ are smooth enough, one we can prove $L^p$ estimates on the transport density $\sigma$ for large $p$. More precisely, we have the following

\begin{proposition} \label{L^p estimates for large p under smoothness of data}
Suppose that $\Omega$ is uniformly geodesically convex. Assume that $f^\pm \in C^{0,\alpha}(\partial\Omega)$ with $0<\alpha\leq1$. Then, the transport density $\sigma$ between $f^+$ and $f^-$ belongs to $L^p(\Omega)$ with $p=\frac{2}{1-\alpha}$. 
\end{proposition}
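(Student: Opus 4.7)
The plan is to follow closely the strategy used in the proof of Theorem \ref{L^p estimates} and then handle the larger exponent via a Hölder-type refinement of the final integral. First, I would approximate $f^-$ by atomic measures with atoms $\{x_i\}$ and preimage arcs $\{\Gamma_i\} \subset \partial\Omega$, split the transport density as $\sigma = \sigma^+ + \sigma^-$ with parameter $\tau = \tfrac{1}{2}$, and work on each region $\Omega_i$ separately. Using the Jacobian lower bound from Lemma \ref{geometric lemma} together with the uniform geodesic convexity of $\Omega$, I arrive at the same key estimate that appears at \eqref{eq: 6.50}:
$$\|\sigma_i^+\|_{L^p(\Omega_i)}^p \leq C \int_{\Gamma_i} d_k(x,x_i)^{2-p}\, f^+(x)^p \, \mathrm{d}\mathcal{H}^1(x).$$
For $p = 2/(1-\alpha)$, one has $2-p = -2\alpha/(1-\alpha)$, so the integrand is singular at $x = x_i$ whenever $x_i \in \overline{\Gamma_i}$, i.e.\ whenever the supports of $f^+$ and $f^-$ touch at $x_i$.

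Second, I would use the Hölder regularity of $f^+$ and $f^-$ to tame this singularity. The crucial observation is that a singularity in the above integrand only occurs at points $x_i$ that are fixed by the optimal transport map in the limit, i.e.\ at balance points of the boundary mass distribution. Near such a balance point, one has $f^+ = f^-$, and the $C^{0,\alpha}$ regularity of $f^\pm$ forces the cumulative mass discrepancy over an arc of length $\ell$ on $\partial\Omega$ to be of order $\ell^{\alpha+1}$. Via the mass balance condition of the boundary-to-boundary monotone rearrangement, this in turn controls $d_k(x, x_i)$ from below in terms of the arc-length distance from the balance point. Combined with the pointwise Hölder bound on $f^+(x)$ itself (compared with a neighbouring value of $f^-$), the integrand becomes integrable with bounds independent of $i$, and summing over $i$ yields a uniform bound on $\|\sigma^+\|_{L^p(\Omega)}^p$.

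Third, after passing to the limit $n \to \infty$ via Proposition \ref{stability}, and then repeating the argument with the roles of $f^+$ and $f^-$ interchanged to control $\sigma^-$, I obtain the required $L^p$ estimate on $\sigma = \sigma^+ + \sigma^-$.

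The main obstacle will be making the second step precise. In the Euclidean setting, this reduces to a one-dimensional analysis of the monotone rearrangement of $f^+$ onto $f^-$ along $\partial\Omega$, where the Hölder regularity translates directly into the quantitative lower bound on $d_k(x, T(x))$ near fixed points. In the Riemannian setting, however, $d_k$ is no longer comparable to the intrinsic arc length of $\partial\Omega$ in a simple way, so extracting the correct scaling $d_k(x, x_i) \gtrsim \ell^{\alpha+1}$ requires a careful use of Lemma \ref{geometric lemma} together with the geometry of boundary geodesics near $x_i$. Additional care is needed at points where $f^+$ or $f^-$ vanishes, since there the monotone rearrangement can exhibit degenerate behaviour and one must argue via separate regions depending on whether $d_k(x, x_i)$ is above or below a threshold determined by the Hölder seminorms.
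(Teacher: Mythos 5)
Your Step 1 reproduces the paper's reduction: both arguments rest on the key inequality \eqref{eq: 6.50}, namely $\|\sigma_i^+\|_{L^p(\Omega_i)}^p \le C \int_{\Gamma_i} d_k(x,x_i)^{2-p}\, f^+(x)^p \, \mathrm{d}\mathcal{H}^1(x)$, obtained from Lemma \ref{geometric lemma} and uniform geodesic convexity. The divergence --- and the gap --- is in your Step 2. The paper's mechanism is a one-line pointwise estimate: since $f^+$ and $f^-$ are the mutually singular parts of the Jordan decomposition of $\partial_\tau g$ and are continuous, $f^+$ vanishes on $\{f^->0\}$, and $T^+(x)\in\{f^->0\}$ for $f^+$-a.e.\ $x$; hence $f^+(x)=f^+(x)-f^+(T^+(x))\le C|x-T^+(x)|^\alpha\le C'\, d_k(x,T^+(x))^\alpha$ (using $d_k\ge k_{\min}|\cdot|$). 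Inserted into \eqref{eq: 6.50}, written in the limit with $x_i$ replaced by $T^+(x)$ via Propositions \ref{prop:transportplan} and \ref{stability}, this turns the integrand into $d_k(x,T^+(x))^{2-p+p\alpha}$, which is bounded exactly when $p\le 2/(1-\alpha)$. Your proposal does not contain this observation; instead you attempt to lower-bound $d_k(x,x_i)$ through the cumulative mass balance of the boundary rearrangement, claiming a scaling $d_k(x,x_i)\gtrsim \ell^{1+\alpha}$ in the arc-length distance $\ell$ to the balance point.

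That substitute does not close the argument, for two concrete reasons. First, converting ``cumulative discrepancy $\lesssim\ell^{1+\alpha}$'' into a lower bound on the length of the target arc (hence on $d_k(x,x_i)$) requires a positive lower bound on $f^-$ near the balance point; but at a point where the two supports touch, continuity together with mutual singularity forces $f^+=f^-=0$ there, so the needed non-degeneracy fails precisely where the factor $d_k^{2-p}$ is singular. Your closing caveat that ``additional care is needed where $f^\pm$ vanishes'' concedes this, but that degenerate situation is not exceptional --- it is the only situation in which the integrand is singular. (Relatedly, the claim that ``near such a balance point one has $f^+=f^-$'' is incompatible with the mutual singularity of $f^\pm$ unless both vanish identically there.) Second, even granting the scaling $d_k(x,x_i)\gtrsim \ell^{1+\alpha}$ together with $f^+(x)\lesssim\ell^{\alpha}$, the integrand is only bounded by $\ell^{(1+\alpha)(2-p)+p\alpha}$, and at $p=2/(1-\alpha)$ this exponent equals $-2\alpha^2/(1-\alpha)$, which is integrable near $\ell=0$ only for $\alpha<1/2$; so your route, even if completed, would prove a strictly weaker statement than the proposition. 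The repair is to drop the mass-balance detour entirely and use the pointwise comparison of $f^+(x)$ with $f^+(T^+(x))=0$ described above, which is the content of the paper's proof.
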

\begin{proof}
Let $T^+$ be the optimal transport map from $f^+$ to $f^-$ and set $T^-:=[T^+]^{-1}$. From \eqref{eq: 6.50} and thanks to Proposition \ref{prop:transportplan}, we have the following estimate:
$$||\sigma||_{L^p(\Omega)}^p
\leq C\left[ \int_{\Gamma}d_k(x,T^+(x))^{2-p}{f^+(x)}^p\,\mathrm{d}\mathcal{H}^1(x) + \int_{\Gamma}d_k(T^-(y),y)^{2-p}{f^-(y)}^p\,\mathrm{d}\mathcal{H}^1(y)\right].$$\\
As $f^\pm \in C^{0,\alpha}(\Omega)$, then we get that
$$f^\pm(x)=f^\pm(x) -f^\pm(T^\pm(x)) \leq C|x-T^\pm(x)|^\alpha \leq C d_k(x,T^\pm(x))^\alpha.$$
Hence, 
$$||\sigma||_{L^p(\Omega)}^p
\leq C^p \int_{\Gamma}d_k(x,T^+(x))^{2-p+p\alpha}\,\mathrm{d}\mathcal{H}^1(x)<\infty$$\\
as soon as $p \leq 2/(1-\alpha)$. $\qedhere$
\end{proof}

On the other hand, it is possible to prove $L^p$ estimates on the transport density $\sigma$ for  large $p$ (i.e., $p>2$) if $f^\pm \in L^p(\partial\Omega)$ but under the assumption that $\mbox{spt}(f^+)$ and $\mbox{spt}(f^-)$ do not intersect.

\begin{proposition} \label{L^p estimates for large p}
 Suppose that $\Omega$ is strictly geodesically convex. Let $f^\pm \in L^p(\partial\Omega)$ be such that $\mbox{spt}(f^+) \cap \mbox{spt}(f^-) = \emptyset$. Then, the transport density $\sigma$ belongs to $L^p(\Omega)$, for all $p \in [1,\infty]$. 
\end{proposition}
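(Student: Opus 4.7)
The plan is to re-use the decomposition $\sigma=\sigma^++\sigma^-$ and the atomic approximation strategy from the proof of Theorem~\ref{L^p estimates}, but to replace the role of uniform geodesic convexity by the quantitative input coming from the disjointness of supports. Since $\mathrm{spt}(f^+)$ and $\mathrm{spt}(f^-)$ are disjoint compact subsets of $\partial\Omega$, the weighted distance between them satisfies $\delta:=d_k(\mathrm{spt}(f^+),\mathrm{spt}(f^-))>0$, so that every transport ray has weighted length $\tau(s)\geq\delta$, uniformly in $s$.

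Next I would observe that the map $(x,y)\mapsto \nu_{x,y}(x)\cdot n(x)$, where $\nu_{x,y}$ is the initial velocity of the unique geodesic from $x$ to $y$, is continuous and strictly positive on the compact set $\mathrm{spt}(f^+)\times\mathrm{spt}(f^-)$ by strict geodesic convexity of $\Omega$. Hence it attains a positive minimum $c>0$ on this set. Combining this with the Jacobian lower bound of Lemma~\ref{geometric lemma} yields $\mathcal{J}(s,t)\geq C^{-1}\delta c\,(1-t)^C$, and restricting to $t\in[0,\tau]$ with a fixed $\tau\in(0,1)$ (say $\tau=\tfrac{1}{2}$) gives a uniform positive lower bound $\mathcal{J}(s,t)\geq c_1>0$, independent of $s$ and $p$.

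The rest is bookkeeping. For any atomic approximation $f_n^-$ of $f^-$ chosen with atoms located on $\mathrm{spt}(f^-)$ (so that disjointness of supports is preserved along the sequence), the change of variables leading to \eqref{principal estimate} yields, thanks to the uniform bound $\mathcal{J}\geq c_1$ on $[0,\tau]$,
\begin{equation*}
\|\sigma_i^+\|_{L^p(\Omega_i)}^p \,\leq\, c_1^{-(p-1)}\int_{-\varepsilon}^{\varepsilon}\int_0^{\tau}\bigl(|\alpha'(s)|\,\tau(s)\bigr)^{p}\,f^+(\alpha(s))^{p}\,\mathrm{d}t\,\mathrm{d}s \,\leq\, C\,\|f^+\|_{L^p(\Gamma_i)}^{p},
\end{equation*}
for every $p\in[1,\infty)$. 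Summing in $i$ and passing to the limit via Proposition~\ref{stability} (using weak lower semicontinuity of the $L^p$ norm) produces $\|\sigma^+\|_{L^p(\Omega)}\leq C\|f^+\|_{L^p(\partial\Omega)}$. For $p=\infty$, the same conclusion follows directly from the pointwise formula $\sigma_i^+(y)=|\alpha'(s)|\,\tau(s)\,f^+(\alpha(s))/\mathcal{J}(s,t)$ and the uniform lower bound on $\mathcal{J}$. A symmetric argument, approximating $f^+$ by atomic measures instead and parametrizing the rays from the $f^-$ side, handles $\sigma^-$ and gives the corresponding estimate in terms of $\|f^-\|_{L^p(\partial\Omega)}$.

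The main subtlety, and the reason disjointness of supports suffices in place of uniform geodesic convexity, is that the compactness argument producing the positive lower bound on $\nu\cdot n$ would break down if transport rays of arbitrarily small length were permitted: a degenerating sequence of endpoints could produce a tangential direction and push $\nu\cdot n$ to zero. Disjointness of supports precisely forbids this scenario, leaving only strict (rather than uniform) geodesic convexity as a qualitative hypothesis. The only minor technical point to verify is that the atomic approximations of $f^\pm$ can indeed be chosen with atoms on $\mathrm{spt}(f^\mp)$, which is elementary by partitioning the support into small pieces and placing an atom of the appropriate mass on each one.
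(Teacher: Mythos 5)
Your proof is correct and follows essentially the same route as the paper: both reduce to the estimate \eqref{principal estimate} and use the disjointness of the supports to upgrade the Jacobian bound of Lemma \ref{geometric lemma} to a uniform lower bound $\mathcal{J}(s,t)\geq c_1(1-t)^C$ with $c_1>0$, which makes the $t$-integral converge for every $p$ after cutting at $\tau<1$. Your write-up is in fact more explicit than the paper's brief argument on the two points that matter --- that $\tau(s)\geq d_k(\mathrm{spt}(f^+),\mathrm{spt}(f^-))>0$ together with a compactness argument gives $\nu(s)\cdot n(s)\geq c>0$, and that the atoms of the approximating measures should be placed on $\mathrm{spt}(f^-)$ so that the separation of supports persists along the approximation.
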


\begin{proof}
The proof follows directly from the previous estimates in Theorem \ref{L^p estimates}. From \eqref{principal estimate}, we have
$$||\sigma^+||_{L^p(\Omega)}^p\leq \int_{-\varepsilon}^\varepsilon\int_{0}^{\tau}\frac{|\alpha^\prime(s)|^p\,\tau(s)^p\,{f^+(\alpha(s))}^p}{{\mathcal{J}(s,t)}^{p-1}}\,\mathrm{d}t\,\mathrm{d}s.$$\\
From \eqref{lower bound on the Jacobian} and thanks to the strict geodesic convexity of $\Omega$, we infer that there is a uniform constant $C<\infty$ depending only on the geometry of the domain $\Omega$ such that the following estimate holds 
$$\mathcal{J}(s,t) \geq C^{-1}(1-t)^C.$$
For $\tau<1$, this implies that
$$||\sigma^+||_{L^p(\Omega)}\leq C ||f^+||_{L^p(\partial\Omega)}.$$
In the same way, we prove that 
$$||\sigma^-||_{L^p(\Omega)}\leq C ||f^-||_{L^p(\partial\Omega)}.$$ \\
Consequently, we get that the transport density $\sigma$ between $f^+$ and $f^-$ is in $L^p(\Omega)$. Moreover, we have the following estimate:
$$||\sigma||_{L^p(\Omega)}\leq C ||f||_{L^p(\partial\Omega)}. \qedhere$$ 
\end{proof}



\section{Applications to the weighted least gradient problem}\label{sec:applications}

Thanks to the results in the previous sections, we prove existence and uniqueness of a solution $u$ to the weighted least gradient problem. Moreover, we show $W^{1,p}$ regularity on this solution $u$ from the $L^p$ estimates on the transport density $\sigma$. First, we have the following    \begin{theorem}
Assume that $\Omega$ is strictly geodesically convex. Then, for any boundary datum $g \in BV(\partial\Omega)$, the weighted least gradient problem \eqref{eq:weightedleastgradientproblem} admits a solution.
\end{theorem}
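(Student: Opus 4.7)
The plan is to chain the equivalences established in Sections 3 and 4 together with existence of an optimal transport plan. Given $g \in BV(\partial\Omega)$, set $f = \partial_\tau g$ and decompose it into positive and negative parts $f^\pm \in \mathcal{M}^+(\partial\Omega)$; since $\partial\Omega$ is a closed curve, we automatically have the mass balance $f^+(\partial\Omega) = f^-(\partial\Omega)$. By standard compactness arguments (the cost $d_k$ is continuous and bounded on $\overline{\Omega} \times \overline{\Omega}$), the Riemannian Kantorovich problem \eqref{Kantorovich problem} admits an optimal transport plan $\Lambda$.

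Next, I would construct the vector measure $v_\Lambda$ and the transport density $\sigma_\Lambda$ as in \eqref{Beckmann minimizer} and \eqref{transport density definition}. By the computation in Section 4, the measure $v := k^{-1} v_\Lambda$ is admissible and optimal for the weighted Beckmann problem \eqref{eq:weightedbeckmannproblem}, with $|v| = k^{-1}\sigma_\Lambda$. The key step is to verify that $|v|(\partial\Omega) = 0$. Since $k$ is bounded above and below, it suffices to show $\sigma_\Lambda(\partial\Omega) = 0$. Using the alternative representation \eqref{transport density definition 1},
\begin{equation*}
\sigma_\Lambda(\partial\Omega) = \int_{\overline{\Omega} \times \overline{\Omega}} \mathcal{H}^1_k(\gamma_{x,y} \cap \partial\Omega) \, \mathrm{d}\Lambda(x,y).
\end{equation*}
Strict geodesic convexity of $\Omega$ means that for each $(x,y) \in \overline{\Omega} \times \overline{\Omega}$, the unique minimizing geodesic $\gamma_{x,y}$ lies in the interior of $\Omega$ except possibly at its endpoints; consequently, $\gamma_{x,y} \cap \partial\Omega \subset \{x,y\}$ has $\mathcal{H}^1$-measure (hence $\mathcal{H}^1_k$-measure) zero, and so $\sigma_\Lambda(\partial\Omega) = 0$.

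To conclude, I would apply Theorem \ref{thm:anisotropicequivalence}(3) with the metric integrand $\phi(x,p) = k(x)|p|$ (note that $\phi^\perp = \phi$ since $k|R_{-\pi/2}p| = k|p|$, so $\int k |v| = \int \phi^\perp(x,v)$). Applying this result requires contractibility of $\Omega$, which is guaranteed by strict geodesic convexity: we may contract $\Omega$ to any fixed interior point along the corresponding unique geodesics, which depend continuously on the endpoints by the $C^{1,1}$ regularity of $k$. The theorem then produces $u \in BV(\Omega)$ with trace $g$ such that $v = R_{\pi/2}Du$, and this $u$ is a minimizer of the weighted least gradient problem \eqref{eq:weightedleastgradientproblem}.

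The main obstacle is the verification that $|v|(\partial\Omega) = 0$, but this is exactly the content of the observation above about strict geodesic convexity; the rest of the argument is a direct application of the results already assembled in Sections 3 and 4.
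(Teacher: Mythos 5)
Your proposal is correct and follows essentially the same route as the paper: take an optimal plan $\Lambda$ for the Riemannian Kantorovich problem, observe that strict geodesic convexity forces $\sigma_\Lambda(\partial\Omega)=0$ via the representation \eqref{transport density definition 1}, and then invoke Theorem \ref{thm:anisotropicequivalence}(3) to recover a $BV$ minimizer. Your version is in fact slightly more careful than the paper's, in that you correctly track the factor $k^{-1}$ in $v=k^{-1}v_\Lambda$ and explicitly verify the hypotheses (mass balance, contractibility, $\phi^\perp=\phi$) of the equivalence theorem.
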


\begin{proof}
 Set $f=\partial_\tau g$. Let $\Lambda$ be an optimal transport plan between $f^+$ and $f^-$. Let $\sigma_\Lambda$ be the transport density associated with $\Lambda$ and let $v_\Lambda$ be the vector version of $\sigma_\Lambda$ (see \eqref{Beckmann minimizer}). Then,
$$\sigma_\Lambda(\partial\Omega)=\int_{\partial\Omega \times \partial\Omega} \mathcal{H}^1_k(\gamma_{x,y} \cap \partial\Omega)\,\mathrm{d}\Lambda (x,y),$$ where $\mathcal{H}^1_k(\gamma):=\int_0^1 k(\gamma(t))|\gamma^\prime(t)|\,\mathrm{d}t$. Since $\Omega$ is strictly geodesically convex, for any two points $x,\,y \in \partial\Omega$, the geodesic $\gamma_{x,y}$ lies in the interior of $\Omega$ except for its endpoints, so $\mathcal{H}^1_k(\gamma_{x,y} \cap \partial\Omega)=0$. This implies that $\sigma_\Lambda(\partial\Omega)=0$. From Theorem \ref{thm:anisotropicequivalence}, we infer that there is a function $u \in BV(\Omega)$ such that $v_\Lambda=R_{\frac{\pi}{2}}Du$ and it is a solution for Problem \eqref{eq:weightedleastgradientproblem}.
\end{proof}

\begin{theorem}
Assume that $\Omega$ is strictly geodesically convex and $g \in BV(\partial\Omega) \cap C(\partial\Omega)$. Then, the weighted least gradient problem \eqref{eq:weightedleastgradientproblem} has a unique solution.
\end{theorem}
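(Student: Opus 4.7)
The plan is to combine the chain of equivalences built up in Sections \ref{sec:lgpbeckequivalence} and \ref{sec:beckmongeequivalence} with the uniqueness of the optimal transport plan proved in Proposition \ref{prop:transportplan}. Let $u_1, u_2 \in BV(\Omega)$ be two solutions of \eqref{eq:weightedleastgradientproblem}. First I would apply (the weighted version of) Theorem \ref{thm:anisotropicequivalence} to produce optimal flows $v_i := R_{\frac{\pi}{2}} Du_i$ for the weighted Beckmann problem \eqref{eq:weightedbeckmannproblem}, both with $f := \partial_\tau g$ as boundary flux. Then, using the characterization from Section \ref{sec:beckmongeequivalence} that every optimal flow is of the form $k^{-1} v_\Lambda$ for some optimal transport plan $\Lambda$ of the Riemannian Kantorovich problem \eqref{Kantorovich problem} with marginals $f^+$ and $f^-$, I would obtain optimal plans $\Lambda_1, \Lambda_2$ with $v_i = k^{-1} v_{\Lambda_i}$.

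The crucial observation is that continuity of $g$ forces the measure $f = \partial_\tau g$ to be atomless: atoms of the tangential derivative correspond precisely to jump points of $g$ on the closed curve $\partial\Omega$, which are excluded by $g \in C(\partial\Omega)$. Therefore both $f^+$ and $f^-$ are non-atomic. Since $\Omega$ is strictly geodesically convex, Proposition \ref{prop:transportplan} applies and yields a unique optimal transport plan $\Lambda$ for \eqref{Kantorovich problem}. Hence $\Lambda_1 = \Lambda_2 = \Lambda$ and consequently $v_1 = k^{-1} v_\Lambda = v_2$.

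From $v_1 = v_2$ and the definition $v_i = R_{\frac{\pi}{2}} Du_i$, the rotation being a linear isomorphism gives $Du_1 = Du_2$ as vector measures in $\Omega$. Thus $u_1 - u_2$ has zero distributional gradient in the connected open set $\Omega$, so $u_1 - u_2$ is constant; since $u_1$ and $u_2$ share the same trace $g$ on $\partial\Omega$, this constant must vanish, and $u_1 = u_2$.

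The main point to verify carefully is that the equivalence of Theorem \ref{thm:anisotropicequivalence} can indeed be invoked here, which requires $|v_i|(\partial\Omega) = 0$; but this is exactly the fact already used in the existence proof just above, where strict geodesic convexity of $\Omega$ guarantees that for every transport plan $\Lambda$ with marginals on $\partial\Omega$ the transport density $\sigma_\Lambda$ (and hence $|v_\Lambda|$) assigns no mass to $\partial\Omega$, because each geodesic $\gamma_{x,y}$ with $x,y\in\partial\Omega$ meets $\partial\Omega$ only at its two endpoints. All other steps are direct consequences of results already established in the paper.
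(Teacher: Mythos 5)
Your proof is correct and takes essentially the same route as the paper's: continuity of $g$ makes $f=\partial_\tau g$ atomless, so Proposition \ref{prop:transportplan} gives uniqueness of the optimal transport plan and hence of the optimal Beckmann flow, and Theorem \ref{thm:anisotropicequivalence}(2) transfers this back to uniqueness of $Du$ and (via the common trace) of $u$. Your extra care about $|v_i|(\partial\Omega)=0$ is harmless but unnecessary for the direction you actually use, since $v_i=R_{\frac{\pi}{2}}Du_i$ with $Du_i$ a measure on the open set $\Omega$ gives no mass to $\partial\Omega$ automatically.
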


\begin{proof}
 Since $g \in BV(\partial\Omega) \cap C(\partial\Omega)$, its tangential derivative is a non-atomic measure and so, by Proposition \ref{prop:transportplan}, the problem \eqref{Kantorovich problem} has a unique optimal transport plan $\Lambda$. From the equivalence between Problems \eqref{Kantorovich problem} and \eqref{weighted beckmann problem}, we infer that $v_\Lambda$ is the unique minimizer of Problem \eqref{weighted beckmann problem}. Thanks to Theorem \ref{thm:anisotropicequivalence}, if $u$ is any solution for Problem \eqref{eq:weightedleastgradientproblem} then the measure \,$v:=R_{\frac{\pi}{2}} Du$\, solves Problem \eqref{eq:weightedbeckmannproblem}. This implies that also the solution to Problem \eqref{eq:weightedleastgradientproblem} is unique.
\end{proof}

The main results in this paper are the following $W^{1,p}$ regularity estimates on the solution $u$ of Problem \eqref{eq:weightedleastgradientproblem}:

\begin{theorem}
Suppose that \,$\Omega$ is strictly geodesically convex and $g \in W^{1,1}(\partial\Omega)$. Then, the solution $u$ of Problem \eqref{eq:weightedleastgradientproblem} belongs to $W^{1,1}(\Omega)$. 
\end{theorem}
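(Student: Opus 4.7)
The plan is to use the chain of equivalences (weighted least gradient $\leftrightarrow$ weighted Beckmann $\leftrightarrow$ Riemannian optimal transport) established in Sections \ref{sec:lgpbeckequivalence}--\ref{sec:beckmongeequivalence} to reduce the $W^{1,1}$ regularity of $u$ to the $L^1$ summability of the associated transport density, which is exactly the content of the first part of Theorem \ref{L^p estimates}. Since $\partial\Omega$ is one-dimensional, the hypothesis $g \in W^{1,1}(\partial\Omega)$ implies that $g$ is (absolutely) continuous and that its tangential derivative $f := \partial_\tau g$ is an $L^1$ function on $\partial\Omega$; in particular $f^+,f^- \in L^1(\partial\Omega)$. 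By the two preceding theorems in this section, the weighted least gradient problem then admits a (unique) solution $u\in BV(\Omega)$.

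Next, by Theorem \ref{thm:anisotropicequivalence}(2) applied to the metric integrand $\phi(x,p)=k(x)|p|$, the vector measure $v := R_{\frac{\pi}{2}} Du$ is an optimal flow for the weighted Beckmann problem \eqref{eq:weightedbeckmannproblem}. Now I would invoke the structural result of Section \ref{sec:beckmongeequivalence}, according to which every optimal flow of \eqref{eq:weightedbeckmannproblem} is of the form $v = k^{-1} v_\Lambda$, where $\Lambda$ is an optimal transport plan between $f^+$ and $f^-$ for the Riemannian cost $d_k$ and $|v_\Lambda|=\sigma_\Lambda$ is the corresponding transport density. Since rotations preserve magnitudes and $k>0$, this yields the equality of nonnegative measures
$$ |Du|\;=\;|v|\;=\;k^{-1}|v_\Lambda|\;=\;k^{-1}\sigma_\Lambda \qquad \text{on}\ \overline{\Omega}. $$

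Finally, the first part of Theorem \ref{L^p estimates} asserts that, under the sole assumption that $\Omega$ is strictly geodesically convex, $f^+\in L^1(\partial\Omega)$ already implies $\sigma_\Lambda \in L^1(\Omega)$ (note that uniform geodesic convexity is only needed for the range $p\in(1,2]$). Because $k\in C^{1,1}(\overline{\Omega})$ is bounded away from zero, $k^{-1}$ is bounded on $\overline{\Omega}$, and therefore $|Du|=k^{-1}\sigma_\Lambda$ is absolutely continuous with respect to $\mathcal{L}^2$ with density in $L^1(\Omega)$. Combined with $u\in BV(\Omega)\subset L^1(\Omega)$, this gives $u\in W^{1,1}(\Omega)$. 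There is no genuine obstacle at this stage: everything has already been done in the previous sections, and the present statement is essentially a direct corollary of the equivalences together with the $L^1$ bound on the transport density.
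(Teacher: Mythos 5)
Your proposal is correct and follows essentially the same route as the paper: pass from $g \in W^{1,1}(\partial\Omega)$ to $f = \partial_\tau g \in L^1(\partial\Omega)$, apply the $L^1$ part of Theorem \ref{L^p estimates} (which indeed only needs strict geodesic convexity), and transfer the bound back to $|Du|$ via the equivalence of Theorem \ref{thm:anisotropicequivalence}. Your extra care with the factor $k^{-1}$ in $|Du| = k^{-1}\sigma_\Lambda$ is a harmless refinement of the paper's shorthand $|Du|=\sigma$, since $k$ is bounded away from zero and infinity.
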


\begin{proof}
If $g \in W^{1,1}(\partial\Omega)$, then $f=\partial_\tau g \in L^1(\partial\Omega)$. From Theorem \ref{L^p estimates}, we infer that the transport density $\sigma$ between $f^+$ and $f^-$ is in $L^1(\Omega)$. But, thanks to Theorem \ref{thm:anisotropicequivalence}, we have $|Du|=\sigma$, so $u$ belongs to $W^{1,1}(\Omega)$.
\end{proof}

\begin{theorem}
Suppose that $\Omega$ is uniformly geodesically convex and $g \in W^{1,p}(\partial\Omega)$ with $p \in [1,2]$. Then, the solution $u$ of Problem \eqref{eq:weightedleastgradientproblem} is in $W^{1,p}(\Omega)$.  
\end{theorem}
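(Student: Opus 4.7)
The plan is to follow the exact same strategy as the preceding $W^{1,1}$ theorem, but to invoke the stronger $L^p$ estimate from Theorem \ref{L^p estimates} which requires the uniform geodesic convexity assumption. The proof will proceed in three short steps.

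First, from $g \in W^{1,p}(\partial\Omega)$ I deduce that $f := \partial_\tau g \in L^p(\partial\Omega)$; consequently the Hahn decomposition gives $f^+, f^- \in L^p(\partial\Omega)$, which are the boundary source and target of the Riemannian Monge--Kantorovich problem \eqref{Kantorovich problem}. Since $g$ is continuous, the previous theorem guarantees that the solution $u$ of \eqref{eq:weightedleastgradientproblem} is unique, and is produced via the chain $u \leftrightarrow v \leftrightarrow \Lambda$ with $v = k^{-1} v_\Lambda = R_{\pi/2} D u$ from Sections \ref{sec:lgpbeckequivalence} and \ref{sec:beckmongeequivalence}.

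Second, I invoke the second part of Theorem \ref{L^p estimates}: under uniform geodesic convexity of $\Omega$ and with $f^\pm \in L^p(\partial\Omega)$ for $p \in [1,2]$, the transport density $\sigma_\Lambda$ between $f^+$ and $f^-$ belongs to $L^p(\Omega)$, with the estimate $\|\sigma_\Lambda\|_{L^p(\Omega)} \leq C \|f\|_{L^p(\partial\Omega)}$. This is precisely where the uniform convexity is consumed (to control $\nu(s)\cdot n(s)$ from below by $c\,\tau(s)$ in the Jacobian bound of Lemma \ref{geometric lemma}); without this bound, one would only get the $L^1$ estimate.

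Third, I translate this summability back through the chain of equivalences. From the discussion in Section \ref{sec:beckmongeequivalence}, $|v_\Lambda| = \sigma_\Lambda$, and the weighted Beckmann optimizer satisfies $v = k^{-1} v_\Lambda$, so $k|v| = \sigma_\Lambda$. Since $k \in C^{1,1}(\overline{\Omega})$ is bounded above and below by positive constants, $|v| \in L^p(\Omega)$ with equivalent norm. Theorem \ref{thm:anisotropicequivalence} then yields $|Du| = |v| \in L^p(\Omega)$, so the distributional gradient $Du$ is absolutely continuous with respect to Lebesgue measure and has an $L^p$ density, which means $u \in W^{1,p}(\Omega)$ with the quantitative bound
\begin{equation*}
\|\nabla u\|_{L^p(\Omega)} \leq C \|f\|_{L^p(\partial\Omega)} = C \|\partial_\tau g\|_{L^p(\partial\Omega)}.
\end{equation*}

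There is no real obstacle here since all the hard work has been carried out in Sections \ref{sec:lgpbeckequivalence}--\ref{sec:lpestimates}; the only subtlety worth flagging is that the restriction $p \leq 2$ is not an artifact of the proof technique but comes from Theorem \ref{L^p estimates} itself, which in turn reflects the fact that in \eqref{principal estimate} the exponent $2-p$ on $\tau(s)$ must be nonnegative for the integral against $f^+(\alpha(s))^p$ to remain controlled without further regularity assumptions on $f^\pm$ (as is the case in Proposition \ref{L^p estimates for large p under smoothness of data}).
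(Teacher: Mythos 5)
Your proposal is correct and follows essentially the same route as the paper: deduce $f=\partial_\tau g\in L^p(\partial\Omega)$, apply the second part of Theorem \ref{L^p estimates} (where uniform geodesic convexity is used), and transfer the $L^p$ bound on the transport density to $|Du|$ via the equivalences of Sections \ref{sec:lgpbeckequivalence}--\ref{sec:beckmongeequivalence}. Your extra care with the factor $k$ (writing $k|v|=\sigma_\Lambda$ and using that $k$ is bounded above and below) is a slightly more precise bookkeeping than the paper's shorthand $|Du|=\sigma$, but does not change the argument.
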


\begin{proof}
If $g \in W^{1,p}(\partial\Omega)$, then $f=\partial_\tau g \in L^p(\partial\Omega)$. From Theorem \ref{L^p estimates}, we infer that the transport density $\sigma$ between $f^+$ and $f^-$ is in $L^p(\Omega)$. So, using Theorem \ref{thm:anisotropicequivalence}, we have $|Du|=\sigma$ and we get $u \in W^{1,p}(\Omega)$.
\end{proof}

\begin{theorem}
Assume that $\Omega$ is uniformly geodesically convex and $g \in C^{1,\alpha}(\partial\Omega)$ with $\alpha \in (0,1]$. Then, the solution $u$ of Problem \eqref{eq:weightedleastgradientproblem} is in $W^{1,p}(\Omega)$ with $p=2/(1-\alpha)$.   
\end{theorem}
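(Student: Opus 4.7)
The plan is to follow the exact same template as the two preceding regularity theorems, which transfer $L^p$ summability of the transport density into $W^{1,p}$ regularity of the weighted least gradient solution via the equivalence of Section~\ref{sec:lgpbeckequivalence}. The only new ingredient we need is the $L^p$ estimate for large $p$ that is already available under H\"older regularity of the data, namely Proposition~\ref{L^p estimates for large p under smoothness of data}.

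First I would observe that if $g\in C^{1,\alpha}(\partial\Omega)$, then the tangential derivative satisfies $f=\partial_\tau g\in C^{0,\alpha}(\partial\Omega)$, and so do its positive and negative parts $f^\pm$ (this last point is standard for H\"older continuous functions on a one-dimensional curve). Since $\Omega$ is geodesically uniformly convex, Proposition~\ref{L^p estimates for large p under smoothness of data} applies and yields that the transport density $\sigma$ between $f^+$ and $f^-$, associated with the (unique, by Proposition~\ref{prop:transportplan}) optimal transport plan $\Lambda$ for the Riemannian Kantorovich problem \eqref{Kantorovich problem}, satisfies $\sigma\in L^p(\Omega)$ for $p=\frac{2}{1-\alpha}$.

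Next I would invoke the chain of equivalences developed in Sections~\ref{sec:lgpbeckequivalence} and \ref{sec:beckmongeequivalence}. Namely, $v_\Lambda=-\sigma\,\frac{\nabla \psi}{|\nabla \psi|}$ satisfies $k^{-1} v_\Lambda$ solves the weighted Beckmann problem \eqref{eq:weightedbeckmannproblem}, and $|v_\Lambda|=\sigma$. Strict geodesic convexity gives $|v_\Lambda|(\partial\Omega)=\sigma(\partial\Omega)=0$, so by Theorem~\ref{thm:anisotropicequivalence}(3) there exists $u\in BV(\Omega)$ with $v_\Lambda=R_{\pi/2}Du$ which is a solution of \eqref{eq:weightedleastgradientproblem}. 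In particular $|Du|=|v_\Lambda|=\sigma \in L^p(\Omega)$, hence $u\in W^{1,p}(\Omega)$ with $p=\frac{2}{1-\alpha}$.

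There is essentially no obstacle to this argument; it is a direct concatenation of Proposition~\ref{L^p estimates for large p under smoothness of data} with the equivalence between \eqref{eq:weightedleastgradientproblem} and \eqref{Kantorovich problem}. The only mild point worth checking is that $f^\pm$ inherit the $C^{0,\alpha}$ regularity of $f=\partial_\tau g$, which follows from the fact that both the positive and negative parts of a H\"older continuous function on $\partial\Omega$ remain H\"older continuous with the same exponent. Uniqueness of the solution follows as in the previous theorems, since $g\in C^{1,\alpha}(\partial\Omega)\subset C(\partial\Omega)$ implies that $f$ is non-atomic, so Proposition~\ref{prop:transportplan} gives a unique optimal plan $\Lambda$.
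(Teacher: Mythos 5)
Your proposal is correct and follows essentially the same route as the paper: differentiate $g$ tangentially to get $f=\partial_\tau g\in C^{0,\alpha}(\partial\Omega)$, apply Proposition~\ref{L^p estimates for large p under smoothness of data} to obtain $\sigma\in L^p(\Omega)$ with $p=2/(1-\alpha)$, and transfer this to $u$ via the identification $|Du|=\sigma$ from the equivalence theorems. You merely spell out the equivalence chain and the H\"older stability of $f^\pm$ in more detail than the paper does.
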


\begin{proof}
 If $g \in C^{1,\alpha}(\partial\Omega)$, then $f=\partial_\tau g \in C^{0,\alpha}(\partial\Omega)$. Then, the result follows immediately from Proposition \ref{L^p estimates for large p under smoothness of data}.
\end{proof}

\begin{theorem}
Suppose that $\Omega$ is strictly geodesically convex. Assume that $g \in W^{1,p}(\partial\Omega)$ and $g$ has flat parts separating those with positive and negative derivatives. Then, the solution $u$ of Problem \eqref{eq:weightedleastgradientproblem} belongs to $W^{1,p}(\Omega)$, for all \,$p \in [1,\infty]$. 
\end{theorem}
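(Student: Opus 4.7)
The plan is to reduce the statement directly to Proposition \ref{L^p estimates for large p} combined with the equivalence between the weighted least gradient problem and the weighted Beckmann problem from Theorem \ref{thm:anisotropicequivalence}. The key observation is that the assumption on $g$ having flat parts separating the regions where $\partial_\tau g>0$ from those where $\partial_\tau g<0$ is exactly the hypothesis needed to ensure that the positive and negative parts of $f := \partial_\tau g$ have disjoint supports.

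First, I would verify that $f \in L^p(\partial\Omega)$: indeed, $g \in W^{1,p}(\partial\Omega)$ implies $\partial_\tau g \in L^p(\partial\Omega)$, and consequently both $f^+$ and $f^-$ belong to $L^p(\partial\Omega)$. Next, I would argue carefully that $\mathrm{spt}(f^+) \cap \mathrm{spt}(f^-) = \emptyset$: by hypothesis, between any arc of $\partial\Omega$ where $\partial_\tau g > 0$ (contributing to $f^+$) and an arc where $\partial_\tau g < 0$ (contributing to $f^-$), there is a flat part of $g$ where $\partial_\tau g \equiv 0$, and this flat arc has positive length. Hence the supports of $f^+$ and $f^-$ are separated by open arcs on which $f$ vanishes identically, which gives the desired disjointness.

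With these two facts in hand, Proposition \ref{L^p estimates for large p} applies and yields that the transport density $\sigma$ between $f^+$ and $f^-$ belongs to $L^p(\Omega)$ for all $p \in [1,\infty]$. Finally, I would invoke Theorem \ref{thm:anisotropicequivalence}: let $\Lambda$ be the optimal transport plan and $v_\Lambda$ the corresponding optimal flow for the weighted Beckmann problem. Since $\Omega$ is strictly geodesically convex, $|v_\Lambda|(\partial\Omega)=0$, and by Theorem \ref{thm:anisotropicequivalence} there exists a solution $u \in BV(\Omega)$ with $v_\Lambda = R_{\pi/2} Du$, so that $|Du| = |v_\Lambda| = \sigma \in L^p(\Omega)$. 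This gives $u \in W^{1,p}(\Omega)$ for every $p \in [1,\infty]$.

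There is essentially no obstacle here beyond the bookkeeping: the statement is a direct corollary combining the previously established equivalence with the disjoint-support $L^p$ estimate. The only point that requires a brief justification is the disjointness of supports from the flatness hypothesis on $g$, but this is immediate once the statement is unpacked in terms of $\partial_\tau g$.
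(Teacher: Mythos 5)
Your proposal is correct and follows essentially the same route as the paper: observe that the flatness hypothesis forces $\mathrm{spt}(f^+)\cap\mathrm{spt}(f^-)=\emptyset$ with $f=\partial_\tau g\in L^p(\partial\Omega)$, apply Proposition \ref{L^p estimates for large p}, and conclude via the equivalence $|Du|=\sigma$. The extra detail you give on why the supports are disjoint is a welcome elaboration of a step the paper states without justification.
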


\begin{proof}
 Notice that $\mbox{spt}(f^+) \cap \mbox{spt}(f^-)=\emptyset$. If $g \in W^{1,p}(\partial\Omega)$, then $f=\partial_\tau g \in L^p(\partial\Omega)$, so the proof follows immediately from Proposition \ref{L^p estimates for large p} and the fact that $|Du|=\sigma$.
\end{proof}

We conclude with a generalisation of \cite[Theorem 4.1]{Gor2021Appl} from the Euclidean case. Let us note that in this case the solution is not necessarily unique, but the result is valid for all the solutions.

\begin{theorem}
Suppose that $\Omega \subset \mathbb{R}^2$ is strictly geodesically convex. Let $g \in SBV(\partial\Omega)$. If $u \in BV(\Omega)$ is a solution to Problem \eqref{eq:leastgradientproblem} with boundary data $g$, then $u \in SBV(\Omega)$.
\end{theorem}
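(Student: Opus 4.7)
The plan is to exploit the identification $|Du| = \sigma_\Lambda$ (up to multiplication by the bounded function $k^{-1}$ in the weighted case) provided by the equivalences of Sections \ref{sec:lgpbeckequivalence} and \ref{sec:beckmongeequivalence}. Since $u \in SBV(\Omega)$ is equivalent to saying that $|Du|$ has no Cantor part, the proof reduces to analysing the transport density associated to an optimal transport plan $\Lambda$ between $f^+$ and $f^-$, where $f = \partial_\tau g$. Because $g \in SBV(\partial\Omega)$, we may write $f = f_{ac} + f_j$, where $f_{ac} \in L^1(\partial\Omega)$ (with respect to $\mathcal{H}^1|_{\partial\Omega}$) and $f_j = \sum_i c_i \delta_{x_i}$ is a countable sum of Dirac masses concentrated on the at most countable discontinuity set $A = \{x_i\}$ of $g$. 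Since $f_{ac}$ and $f_j$ are mutually singular, this decomposition is compatible with the Jordan decomposition, so that $f^\pm = f_{ac}^\pm + f_j^\pm$.

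Next, I would decompose the optimal plan according to whether the source and target lie in $A$:
\begin{equation*}
\Lambda = \Lambda_{cc} + \Lambda_{ca} + \Lambda_{ac} + \Lambda_{aa}, \qquad \Lambda_{\star\star} := \Lambda|_{S_\star \times S_\star},
\end{equation*}
with $S_c = \partial\Omega \setminus A$ and $S_a = A$. A routine cut-and-paste argument shows each $\Lambda_{\star\star}$ is an optimal plan between its own marginals: otherwise, replacing the corresponding piece of $\Lambda$ by a strictly better plan with the same marginals would contradict optimality of $\Lambda$. The corresponding transport densities are additive, so $\sigma_\Lambda = \sigma_{cc} + \sigma_{ca} + \sigma_{ac} + \sigma_{aa}$. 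For the first three pieces at least one of the marginals is a measure dominated by $f_{ac}^\pm \in L^1(\partial\Omega)$; applying the $L^1$ part of Theorem \ref{L^p estimates} (which only needs one $L^1$ marginal, and symmetrically for $\Lambda_{ac}$ by reversing the role of source and target), we obtain $\sigma_{cc}, \sigma_{ca}, \sigma_{ac} \in L^1(\Omega)$. These contribute only to the absolutely continuous part of $|Du|$.

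It remains to analyse $\sigma_{aa}$. Here the marginals $\mu_4 \leq f_j^+$ and $\nu_4 \leq f_j^-$ are both at most countable sums of Dirac masses, so $\Lambda_{aa} = \sum_{i,j} \lambda_{ij} \delta_{(x_i, x_j)}$ with $\sum \lambda_{ij} < \infty$. Using formula \eqref{transport density definition 1},
\begin{equation*}
\sigma_{aa}(B) = \sum_{i,j} \lambda_{ij}\, \mathcal{H}^1_k(\gamma_{x_i,x_j} \cap B)\,,
\end{equation*}
so $\sigma_{aa}$ is concentrated on the countable union $\mathcal{D}_{aa} := \bigcup_{i,j: \lambda_{ij} > 0} \gamma_{x_i,x_j}$ of geodesics. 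Since each geodesic is $\mathcal{H}^1$-rectifiable with $\mathcal{H}^1(\gamma_{x_i,x_j}) \leq \mathrm{diam}(\Omega)$, the set $\mathcal{D}_{aa}$ is $\sigma$-finite with respect to $\mathcal{H}^1$. By the standard characterisation of the Cantor part of a $BV$ function in $\mathbb{R}^2$ (it must vanish on every Borel set of $\sigma$-finite $\mathcal{H}^1$-measure; see e.g. Ambrosio--Fusco--Pallara), $\sigma_{aa}$ contributes only to the jump part of $|Du|$. Combining the four pieces, $|Du|$ has no Cantor part, and hence $u \in SBV(\Omega)$.

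The main obstacle I anticipate is the verification that each sub-plan $\Lambda_{\star\star}$ is genuinely optimal for its own marginals and that the $L^1$ estimate from Theorem \ref{L^p estimates} carries over to these sub-plans, whose marginals are not of the specific form $(\partial_\tau g)^\pm$ but arbitrary mixtures of absolutely continuous and (possibly infinite) atomic parts; this requires inspecting the proof of Theorem \ref{L^p estimates} to confirm that the approximation by finitely atomic targets and the resulting pointwise Jacobian bound from Lemma \ref{geometric lemma} still yield the bound $\|\sigma_{\star\star}\|_{L^1(\Omega)} \leq C\, \|\mu\|_{L^1(\partial\Omega)}$ whenever either marginal $\mu$ is in $L^1$.
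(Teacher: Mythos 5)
Your argument is correct and follows essentially the same route as the paper: decompose $f=\partial_\tau g$ into absolutely continuous and atomic parts, split the optimal plan (hence the transport density) into four corresponding pieces, use the one-sided $L^1$ estimate of Theorem \ref{L^p estimates} for the three pieces with at least one non-atomic marginal, and observe that the atom-to-atom piece is carried by a countable union of geodesics, which kills the Cantor part. Your write-up is in fact more detailed than the paper's sketch (which defers to the Euclidean reference), and the points you flag — restriction optimality of the sub-plans and applicability of the $L^1$ bound to general boundary marginals — do check out against the statement and proof of Theorem \ref{L^p estimates}.
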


\begin{proof}
The proof of this result in the Euclidean case in \cite{Gor2021Appl} can shortly be described as follows. Since $g \in SBV(\partial\Omega)$, the measure $f = \partial_\tau g$ has no Cantor part, so $f^\pm$ is a union of an absolutely continuous part $f^\pm_{ac}$ and an atomic part $f^\pm_{at}$. We split the transport density into four parts, corresponding to the transport between each of the $f^\pm_{ac}$ and $f^\pm_{at}$.
The transport density between the atomic parts is concentrated on a set of Hausdorff dimension one. The other parts are absolutely continuous, because we need absolute continuity of only one of the source and target measures in order to get $L^1$ estimates for the transport density. Therefore, one can also apply the same reasoning in the case of the weighted least gradient problem; we use the same decomposition of $f^\pm$ into atomic and absolutely continuous parts 
and the fact that in Theorem \ref{L^p estimates} one only needs absolute continuity of the source (or the target) measure.
\end{proof}


{\flushleft \bf Acknowledgements.} WG acknowledges support of the FWF grant I4354, the OeAD-WTZ project CZ 01/2021, and the grant 2017/27/N/ST1/02418 funded by the National Science Centre, Poland.

\begin{bibdiv}
\begin{biblist}

\bib{AB}{article}{
      author={Amar, M.},
      author={Bellettini, G.},
       title={A notion of total variation depending on a metric with
  discontinuous coefficients},
        date={1994},
     journal={Ann. Inst. H. Poincar\'{e} Anal. Non Lin\'{e}aire},
      volume={11},
       pages={91\ndash 133},
}

\bib{AFP}{book}{
      author={Ambrosio, L.},
      author={Fusco, N.},
      author={Pallara, D.},
       title={Functions of bounded variation and free-discontinuity problems},
   publisher={Oxford Math. Monogr.},
     address={Oxford},
        date={2000},
}

\bib{Anz}{article}{
      author={Anzellotti, G.},
       title={Pairings between measures and bounded functions and compensated
  compactness},
        date={1983},
     journal={Ann. di Matematica Pura ed Appl. IV},
      volume={135},
       pages={293\ndash 318},
}
 
\bib{Beckmann}{article}{
author={M. Beckmann}, 
title={A continuous model of transportation}, 
journal={Econometrica}, 
volume={20},
pages={643\ndash660},
date={1952}
 }

\bib{BGG}{article}{
      author={Bombieri, E.},
      author={de Giorgi, E.},
      author={Giusti, E.},
      title={Minimal cones and the {Bernstein} problem},
      journal={Invent. Math.},
      volume={7},
      year={1969},
      pages={243--268}}

\bib{CanSin}{book}{
      author={Cannarsa, P.},
      author={Sinestrari, C.},
       title={Semiconcave Functions, Hamilton—Jacobi Equations, and Optimal Control},
   publisher={Nonlinear Differential Equations and Their Applications, Birkh\"auser},
   address={Basel},
   year={2004}
}

\bib{CF}{article}{
      author={Chen, G-Q.},
      author={Frid, H.},
       title={Divergence-measure fields and hyperbolic conservation laws},
        date={1999},
     journal={Arch. Rational Mech. Anal.},
      volume={147},
       pages={89\ndash 118},
}

\bib{CdC}{article}{
      author={Crasta, G.},
      author={De Cicco, V.},
       title={Anzellotti's pairing theory and the Gauss-Green theorem},
        date={2019},
     journal={Adv. Math.},
      volume={343},
       pages={935 \ndash 970},
}

\bib{DePas1}{article}{
      author={De~Pascale, L.},
      author={Evans, L.C.},
      author={Pratelli, A.},
       title={Integral estimates for transport densities},
        date={2004},
     journal={Bull. of the London Math. Soc.},
      volume={36},
      number={3},
       pages={383\ndash 395},
}

\bib{DePas2}{article}{
      author={Pascale, L.~De},
      author={Pratelli, A.},
       title={Regularity properties for monge transport density and for
  solutions of some shape optimization problem},
        date={2002},
     journal={Calc. Var. Par. Diff. Eq.},
      volume={14},
      number={3},
       pages={249–274},
}

\bib{DePas3}{article}{
      author={Pascale, L.~De},
      author={Pratelli, A.},
       title={Sharp summability for monge transport density via interpolation},
        date={2004},
     journal={ESAIM Control Optim. Calc. Var.},
      volume={10},
      number={4},
       pages={549–552},
}

\bib{DweikWeighted}{article}{
      author={Dweik, S.},
      title={Weighted Beckmann problem with boundary costs},
      date={2018},
      journal={Quarterly of applied mathematics},
      volume={76},
      pages={601--609}
}

\bib{DweikThesis}{book}{
      author={Dweik, S.},
       title={Transport and control problems with boundary costs: regularity and summability of optimal and equilibrium densities},
       publisher={thesis},
       date={2018}
}

\bib{DG2019}{article}{
      author={Dweik, S.},
      author={G\'{o}rny, W.},
       title={Least gradient problem on annuli},
     journal={Analysis \& PDE, to appear},
}

\bib{DS}{article}{
      author={Dweik, S.},
      author={Santambrogio, F.},
       title={{$L^p$} bounds for boundary-to-boundary transport densities, and
  {$W^{1,p}$} bounds for the {BV} least gradient problem in {2D}},
        date={2019},
     journal={Calc. Var. Partial Differential Equations},
      volume={58},
      number={1},
       pages={31},
}

\bib{FmC}{article}{
      author={Feldman, M.},
      author={McCann, R.},
       title={Monge's transport problem on a {R}iemannian manifold},
        date={2002},
     journal={Trans. Amer. Math. Soc.},
      volume={354},
       pages={1667\ndash 1997},
}


\bib{Gor2018CVPDE}{article}{
      author={G\'{o}rny, W.},
       title={Planar least gradient problem: existence, regularity and
  anisotropic case},
        date={2018},
     journal={Calc. Var. Partial Differential Equations},
      volume={57},
      number={4},
       pages={98},
}

\bib{Gor2018JMAA}{article}{
      author={G\'{o}rny, W.},
      title={({Non})uniqueness of minimizers in the least gradient problem},
      journal={J. Math. Anal. Appl.},
      volume={468},
      pages={913--938},
      year={2018}}

\bib{Gor2021IUMJ}{article}{
      author={G\'{o}rny, W.},
      title={Existence of minimisers in the least gradient problem for general boundary data},
      date={2021},
      journal={Indiana Univ. Math. J.},
      volume={70},
      number={3},
      pages={1003--1037},
}


\bib{Gor2021Appl}{article}{
      author={G\'{o}rny, W.},
      title={Applications of optimal transport methods in the least gradient problem},
      date={2021},
      journal={preprint, available at arXiv:$2102.05887$},
      }

\bib{GRS2017NA}{article}{
      author={G\'{o}rny, W.},
      author={Rybka, P.},
      author={Sabra, A.},
       title={Special cases of the planar least gradient problem},
        date={2017},
     journal={Nonlinear Anal.},
      volume={151},
       pages={66\ndash 95},
}

\bib{HM}{article}{
    author={Hauer, D.},
    author={Maz\'{o}n, J.M.},
    title={The {D}irichlet-to-{N}eumann operator associated with the $1$-{L}aplacian and evolution problems},
    journal={Calc. Var. Partial Differential Equations, to appear}
}

\bib{JMN}{article}{
      author={Jerrard, R.L.},
      author={Moradifam, A.},
      author={Nachman, A.I.},
       title={Existence and uniqueness of minimizers of general least gradient
  problems},
        date={2018},
     journal={J. Reine Angew. Math.},
      volume={734},
       pages={71\ndash 97},
}

\bib{Kantorovich}{article}{
author={L. Kantorovich}, 
title={On the transfer of masses}, 
journal={Dokl. Acad. Nauk. USSR}, 
volume={37}, 
pages={7 \ndash8}, 
date={1942}
}

\bib{LMSS}{article}{
      author={Lahti, P.},
      author={Mal\'y, L.},
      author={Shanmugalingam, N.},
      author={Speight, G.},
       title={Domains in metric measure spaces with boundary of positive mean curvature, and the {D}irichlet problem for functions of least gradient},
        date={2019},
     journal={J. Geom. Anal.},
      volume={29},
      number={4},
       pages={3176\ndash 3220},
}

\bib{Mag}{book}{
      author={Maggi, F.},
       title={Sets of finite perimeter and geometric variational problems: an
  introduction to {Geometric} {Measure} {Theory}},
      series={Cambridge Studies in Advanced Mathematics},
   publisher={Cambridge University Press},
     address={Cambridge},
        date={2012},
}

\bib{Maz}{article}{
      author={Maz\'on, J.M.},
      title={The {Euler}-{Lagrange} equation for the anisotropic least gradient problem},
      journal={Nonlinear Anal. Real World Appl.},
      volume={31},
      year={2016},
      pages={452--472}}

\bib{MRL}{article}{
      author={Maz\'on, J.M.},
      author={Rossi, J.D.},
      author={Segura de Le\'on, S.}, 
      title={Functions of least gradient and 1-harmonic functions},
      journal={Indiana Univ. Math. J.},
      volume={63},
      year={2014},
      pages={1067--1084}}

\bib{MazPob}{book}{
      author={Maz'ya, V.},
      author={Poborchi, S.},
       title={Differentiable functions on bad domains},
   publisher={World Scientific Publishing Co., Inc.},
     address={River Edge, NJ},
        date={1997},
}

\bib{Monge}{article}{
author={G. Monge}, 
title={M\'emoire sur la th\'eorie des d\'eblais et des remblais}, 
journal={Histoire de l'Acad\'emie Royale des Sciences de Paris}, 
date={1781}, 
pages={666\ndash704}
}

\bib{MNT}{article}{
     author={Moradifam, A.},
     author={Nachman, A.I.},
     author={Tamasan, A.},
    title={Uniqueness of weighted least gradient problems arising in conductivity imaging},
    date={2018},
    journal={Calc. Var. Partial Differential Equations},
    volume={57},
    number={1},
    pages={6}
}

\bib{Mor}{article}{
      author={Moradifam, A.},
       title={Existence and structure of minimizers of least gradient
  problems},
        date={2018},
     journal={Indiana Univ. Math. J.},
      volume={67},
      number={3},
       pages={1025\ndash 1037},
}

\bib{Prat}{article}{
      author={Pratelli, A.},
       title={Equivalence between some definitions for the optimal mass
  transport problem and for the transport density on manifolds},
        date={2005},
     journal={Ann. Mat. Pura Appl.},
      volume={184},
      number={2},
       pages={215\ndash 238},
}

\bib{Sabra}{article}{
author={P. Rybka and A. Sabra}, 
title={The planar Least Gradient problem in convex domains, the case of continuous datum}, 
journal={Nonlinear Anal.},
volume={214},
pages={112595},
date={2022}
}

\bib{San}{article}{
      author={Santambrogio, F.},
       title={Absolute continuity and summability of transport densities:
  simpler proofs and new estimates},
        date={2009},
     journal={Calc. Var. Partial Differential Equations},
      volume={36},
       pages={343\ndash 354},
}

\bib{San2015}{book}{
      author={Santambrogio, F.},
       title={Optimal transport for applied mathematicians},
      series={Progress in Nonlinear Differential Equations and Their
  Applications 87},
   publisher={Birkh\"auser},
     address={Basel},
        date={2015},
}

\bib{Spradlin}{article}{
author={G. Spradlin},
author={A. Tamasan},
title={Not all traces on the circle come from functions of least gradient in the disk}, 
journal={Indiana Univ. Math. J.}, 
volume={63},
date={2014},
pages={1819 \ndash 1837}
}

\bib{Sternberg}{article}{
author={P. Sternberg},
author={G. Williams}, 
author={W.P. Ziemer},
title={Existence, uniqueness, and regularity for functions of least gradient}, 
journal={J. Reine Angew. Math.}, 
volume={430}, 
date={1992},
pages={35 \ndash 60}
 }

\bib{Vil}{book}{
      author={Villani, C.},
       title={Topics in optimal transportation},
   publisher={American Mathematical Society, Graduate Studies in Mathematics Vol. 58},
        date={2003},
}

\bib{Zun}{article}{
      author={Zuniga, A.},
       title={Continuity of minimizers to the weighted least gradient problems},
        date={2019},
     journal={Nonlinear Analysis},
      volume={178},
       pages={86\ndash 109},
}

\end{biblist}
\end{bibdiv}

\end{document}